\DeclareMathAlphabet{\mathpzc}{OT1}{pzc}{m}{it}
\tikzset{hidden/.style = {thick, dashed}}
\newtheorem{thm}{Theorem}[section]
\newtheorem{lem}[thm]{Lemma}
\newtheorem{prop}[thm]{Proposition}
\newtheorem*{definition*}{Definition}
\newtheorem{cor}[thm]{Corollary}
\newtheorem{conj}[thm]{Conjecture}
\theoremstyle{remark}
       \newtheorem*{rmk}{Remark}
\theoremstyle{remark}
\newcommand{\C}{\mathbb{C}}
\newcommand{\fr}{\frac}
\newcommand{\CC}{\mathbb{C}}
\newcommand{\QQ}{\mathbb{Q}}
\title{Dynamical uniform boundedness and the $abc$-conjecture}
\author{Nicole R. Looper}
\begin{document} 

\thanks{2010 \emph{Mathematics Subject Classification}. Primary 11G99, 14G05, 37P05, 37P35; Secondary 37P40, 37P45. \\ \indent The author's research was supported by NSF grant DMS-1803021.}

\maketitle

\begin{abstract} We address the Uniform Boundedness Conjecture of Morton and Silverman in the case of unicritical polynomials, assuming a generalization of the $abc$-conjecture. For unicritical polynomials of degree at least five, we require only the standard $abc$-conjecture.
\end{abstract}

\section{Introduction}

This article studies the Uniform Boundedness Conjecture of Morton and Silverman \cite{MortonSilverman}, one of the major unsolved problems in arithmetic dynamics. The conjecture concerns preperiodic points of rational maps defined over number fields, and constitutes a massive generalization of the famous theorems of Mazur \cite{Mazur} and Merel \cite{Merel} on torsion points of elliptic curves. 

\begin{conj}[\cite{MortonSilverman},Uniform Boundedness Conjecture]{\label{conj:MortonSilverman}} Let $D,N\ge 1$, $d\ge2$ be integers. There is a constant $B=B(D,N,d)$ such that for any number field $K$ with $[K:\QQ]=D$, and any morphism $\phi:\mathbb{P}^N\to\mathbb{P}^N$ of degree $d$ defined over $K$, $\phi$ has at most $B$ preperiodic points in $\mathbb{P}^N(K)$.
\end{conj}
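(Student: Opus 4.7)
The plan is to attack Conjecture~\ref{conj:MortonSilverman} in the one-dimensional case $N=1$, and even there only for the subfamily of \emph{unicritical polynomials} $f_c(z) = z^d+c$, since these already carry the essential arithmetic phenomena while having a one-dimensional moduli space. The hope is that any bound obtained uniformly in $c$ and in $K$ (of degree $D$) indicates the correct shape of the general conjecture and illustrates the role played by deep Diophantine hypotheses such as $abc$.

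My first step would be to reformulate preperiodicity via heights: $\alpha$ is $\phi$-preperiodic iff the canonical height $\hat h_{f_c}(\alpha)$ vanishes. A uniform bound on $|\mathrm{PrePer}(f_c,K)|$ would follow from a \emph{uniform lower bound} of Lehmer type, namely $\hat h_{f_c}(\alpha) \gg 1/[K(\alpha):K]$ for every non-preperiodic $\alpha$, together with the Northcott-style counting of points of small canonical height in $\mathbb{P}^1(K)$ of bounded degree. Thus the problem is reduced to proving such a dynamical Lehmer-type inequality, uniform in $c$.

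To produce that lower bound, I would analyze the $S$-integral structure of the forward orbit $\{f_c^n(\alpha)\}$. Writing out relations of the form $f_c^{n+1}(\alpha)-c = f_c^n(\alpha)^d$, a preperiodic orbit forces many coincidences among iterates, which upon clearing denominators produce polynomial identities amenable to $abc$-type bounds. The $abc$-conjecture (or a suitable number-field version) then forces the radical of the resultants appearing in these orbit identities to be large relative to the heights, which in turn forces either the height of $\alpha$ to be bounded below by an absolute constant times a negative power of $[K(\alpha):K]$, or $\alpha$ to lie on a very short list of exceptional points. Handling the exceptional set will require separate local/archimedean arguments, presumably equidistribution of small-height points and a study of the Julia set of $f_c$.

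The hard part will be twofold. First, the bound produced by the naive $abc$ input tends to be too weak in low degrees, because the ramification profile of $f_c^n$ over the critical orbit degenerates as $d\to 2$; this is almost certainly why the abstract distinguishes $d\ge 5$ (covered by standard $abc$) from $d\in\{2,3,4\}$ (requiring a generalization). Second, even granting a Lehmer-type bound for every fixed $c$, making the bound \emph{uniform} in the parameter $c$ demands control of the arithmetic geometry of the dynatomic curves cut out by $f_c^n(z)=f_c^m(z)$, and in particular an argument that they do not acquire small-height points on any infinite subfamily. I expect this uniformity across the moduli of $c$ to be the most delicate step, and the one where a carefully stated generalized $abc$-conjecture does the real work.
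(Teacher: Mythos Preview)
The statement in question is a conjecture, not a theorem; the paper does not prove it in general but rather establishes Theorem~\ref{thm:UBCunicrit}, the unicritical case under $abc$-type hypotheses. So I will compare your plan against the paper's proof of that theorem.

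Your proposed reduction to a uniform Lehmer-type inequality is a genuine gap. A lower bound $\hat h_{f_c}(\alpha)\gg 1/[K(\alpha):K]$ for \emph{non}-preperiodic $\alpha$ says nothing about the number of points with $\hat h_{f_c}=0$; the preperiodic points are precisely those you are trying to count, and a Lehmer bound does not touch them. What one would need is a uniform upper bound on the Weil height of preperiodic points independent of $c$, which is false (preperiodic points of $z^d+c$ have Weil height roughly $h(c)/d$). The paper takes a completely different tack: it argues by contradiction that if $f_c$ has \emph{too many} $K$-rational preperiodic points, one can assemble from them an $n$-tuple $(x_0,\dots,x_{n-1})$ with $\sum x_i=0$ whose height vastly exceeds its radical, contradicting the $abcd$-conjecture (the $abc$-conjecture when $n=3$).

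Concretely, for $d\ge 5$ the paper exploits the identity $p_1^d-p_2^d=f(p_1)-f(p_2)$ for two periodic points $p_1,p_2$ in the same cycle; a coprimality lemma (periodic points of equal period share no prime of good reduction) forces $\mathrm{rad}(p_1^d,p_2^d,f(p_1)-f(p_2))$ to be at most about $\tfrac{3}{d}h(c)$ while the height is near $h(c)$, contradicting $abc$ once $d\ge 5$. For $d\in\{2,3,4\}$ the paper builds longer tuples (hexagons when $d=2$, quadrilaterals when $d=3,4$) from \emph{differences} $p_i-p_j$ of preperiodic points. The crucial engine is not a Lehmer bound but an equidistribution argument on the Berkovich filled Julia set at each place of bad reduction: it forces most such differences to have their prime support almost entirely at the bad places and, at a large slice of those, to have \emph{equal} absolute value $|c|_v^{1/d}$. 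This simultaneously makes the tuple's height large and its radical small. Your sketch treats equidistribution as a cleanup tool for an exceptional set; in the paper it is the main mechanism, and the dynatomic-curve geometry you anticipate plays no direct role (Doyle--Poonen is cited only to reduce preperiodic to periodic in the $d\ge 5$ argument).
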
 

Merel's theorem is an immediate consequence of a very particular case, namely, $N=1$, $d=4$, and $\phi$ in the Latt\`{e}s family arising from the multiplication-by-two maps $[2]:E\to E$ of elliptic curves $E/K$. 

Poonen has gathered evidence of Conjecture \ref{conj:MortonSilverman} in the case $K=\QQ$, $N=1$, $d=2$, for the maps $f_c(z)=z^2+c\in\QQ[z]$. If $f_c$ never has $\QQ$-rational periodic points of period at least 6, then it never has more than nine $\QQ$-rational preperiodic points \cite{Poonen}. In \cite{DoylePoonen}, Doyle and Poonen further show that for the maps $f_{d,c}=z^d+c$ defined over a number field $K$, uniform boundedness for preperiodic points reduces to the uniform boundedness of periodic points. They also prove an analogous version of the Uniform Boundedness Conjecture for non-isotrivial maps $z^d+c$ defined over a one-variable function field. Moreover, for a given $d\ge2$, Benedetto proves in \cite{Benedetto} that for any global field $K$, the number of $K$-rational preperiodic points of a degree $d$ polynomial $f\in K[z]$ is at most $O(s\log s)$, where $s$ is the number places of bad reduction of $f$. 

Our goal is to prove the following theorem.

\begin{thm}{\label{thm:UBCunicrit}} Let $K$ be a number field or a one-variable function field of characteristic zero, and let $d\ge2$. Let $f(z)=z^d+c\in K[z]$, where $f$ is not isotrivial if $K$ is a function field. If $d\ge5$ and $K$ is a number field, assume the $abc$-conjecture for $K$. If $2\le d\le4$, assume the $abcd$-conjecture (Conjecture \ref{conj:nconj}). There is a $B=B(d,K)$ such that $f$ has at most $B$ preperiodic points contained in $K$.\end{thm}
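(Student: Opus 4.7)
The plan is to split the argument by the Weil height $h(c)$, after reducing to periodic points via the Doyle--Poonen theorem noted in the excerpt. One chooses a threshold $C_0 = C_0(d, K)$ and handles the regimes $h(c) \leq C_0$ and $h(c) > C_0$ separately. In the low-height regime, Northcott's theorem confines $c$ to a finite set depending only on $d$ and $K$; for each such $c$ the number $s(f_c)$ of places of bad reduction is bounded in terms of $h(c)$, so Benedetto's estimate $\#\mathrm{PrePer}(f_c, K) = O(s \log s)$ gives a bound depending only on $d$ and $K$, and a fortiori on the number of periodic points.

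The heart of the argument is the high-height regime $h(c) > C_0$. Here I would aim to prove a canonical-height lower bound of the form
\[
\hat{h}_{f_c}(\alpha) \geq \epsilon_d\, h(c) - C_1(d, K)
\]
for every $\alpha \in K$ not preperiodic under $f_c$, with $\epsilon_d > 0$. The idea is to take a polynomial identity along the orbit of a prospective periodic point $\alpha$, such as $f_c^n(\alpha) = \alpha$, and apply the abc-conjecture (or abcd, for small $d$) to the factorization. Unicriticality of $f_c$ constrains this factorization tightly, and the resulting $S$-unit type relation forces either the canonical height of $\alpha$ to grow proportionally with $h(c)$, or $c$ itself to have small height, contradicting $h(c) > C_0$. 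Combined with the Northcott-type counting bound $\#\{\alpha \in K : \hat{h}_{f_c}(\alpha) \leq T\} \ll T + 1$, together with the comparison $\hat h_{f_c} = h + O(h(c))$, this gap immediately caps the number of $K$-rational points with $\hat{h}_{f_c}(\alpha) = 0$.

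The main obstacle is producing a positive $\epsilon_d$: the radical error term in the abc-inequality scales unfavorably against the main term when $d$ is small, and for $2 \leq d \leq 4$ one cannot close the gap using the standard abc-conjecture alone. Replacing abc by the abcd-conjecture introduces a four-term relation able to absorb the extra loss, which is why the theorem requires the stronger Conjecture~\ref{conj:nconj} in the small-$d$ range, while standard abc suffices for $d \geq 5$. In the function-field case the analogous geometric abc- and abcd-theorems are unconditional, and the non-isotriviality hypothesis prevents the degenerate vanishing of the discriminants that would otherwise trivialize the $S$-unit analysis.
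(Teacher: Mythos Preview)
Your proposal has a genuine logical gap. A canonical-height lower bound of the form $\hat{h}_{f_c}(\alpha) \geq \epsilon_d\, h(c) - C_1$ for \emph{non}-preperiodic $\alpha$ does not, by itself, bound the number of preperiodic points: those satisfy $\hat{h}_{f_c}(\alpha)=0$ identically, and the ``Northcott-type counting'' you invoke, $\#\{\alpha : \hat{h}_{f_c}(\alpha)\le T\}\ll T+1$, is not a known uniform statement. The implied constant depends on $f_c$ through the comparison $|\hat{h}_{f_c}-h|=O(h(c))$, so the count of height-zero points you extract still grows with $h(c)$. A canonical-height gap is a Lang--Lehmer-type result, and is orthogonal to uniform boundedness of preperiodic points. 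Your description of how abc is to be applied (``polynomial identity along the orbit\ldots $S$-unit type relation forces the canonical height to grow'') is likewise circular: a periodic $\alpha$ has $\hat{h}_{f_c}(\alpha)=0$, so nothing can force it to grow.

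The paper's mechanism is different and concrete. For $d\ge 5$ one takes two distinct periodic points $p_1,p_2$ in the same $K$-rational cycle and applies abc to the single triple $(p_1^d,\,p_2^d,\,f(p_1)-f(p_2))$, using $p_1^d-p_2^d=f(p_1)-f(p_2)$. A coprimality lemma for equal-period periodic points gives $h(P)\ge \tfrac{d-1}{d}h(c)+\textup{rad}(f(p_1)-f(p_2))-O(1)$, while $\textup{rad}(p_i^d)\le h(p_i)\le \tfrac{1}{d}h(c)+O(1)$; combining, $h(P)-(1+\epsilon)\,\textup{rad}(P)\ge \tfrac{d-4-O(\epsilon)}{d}h(c)-O(1)$, contradicting abc once $d\ge 5$ and $h(c)\gg_K 1$. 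Hence no $K$-rational cycle of length $\ge 2$ exists for large $h(c)$, and Doyle--Poonen finishes. For $2\le d\le 4$ this margin disappears, and the paper does \emph{not} simply pass to a four-term relation: it constructs a hexagon ($d=2$, six terms) or quadrilateral ($d=3,4$, four terms) of differences of preperiodic points whose side lengths all coincide at a large slice of bad places. Producing such a configuration is the main work of the paper and requires quantitative $v$-adic equidistribution of large preperiodic sets in the Berkovich filled Julia set, uniform over $v\nmid d$, together with an ``adelic goodness'' argument showing most differences $p_i-p_j$ have negligible support at good primes. None of this machinery appears in your sketch.

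Finally, your function-field remark is not correct: the paper explicitly notes that the known unconditional $n$-term abc results over function fields (Brownawell--Masser, Voloch) do \emph{not} yield the sharp $1+\epsilon$ constant required here, so they do not give an unconditional proof for $2\le d\le 4$. The function-field case is instead handled by citing Doyle--Poonen's unconditional theorem directly.
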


The primary arithmetic ingredient in our main theorem is Conjecture \ref{conj:nconj}, which is a generalization of the $abc$-conjecture. We will dub it ``the $abcd$-conjecture.'' It can be shown that Conjecture \ref{conj:nconj} is a special case of Vojta's conjecture with truncated counting function \cite{Vojta}.

\begin{rmk} It is easy to show that Theorem \ref{thm:UBCunicrit} applies equally to affine conjugates of $f(z)=z^d+c$, as any $g(z)\in K[z]$ that is $\overline{K}$-conjugate to $f$ is in fact $K$-conjugate to $f$. We also remark that when $K$ is a function field, Theorem \ref{thm:UBCunicrit} is superseded by \cite[Theorem 1.6]{DoylePoonen}.\end{rmk}

In the quadratic case, the proof proceeds by proving, for a unicritical map $f$ satisfying certain conditions, the existence of an arithmetically special `hexagon' whose vertices are $K$-rational preperiodic points.  This hexagon is constructed in such a way as to contradict the $abcd$-conjecture (Conjecture \ref{conj:nconj}). Other key ingredients in the proof include equidistribution results for the equilibrium measures of unicritical maps. In particular, in Sections \ref{section:potentialthy} and \ref{section:adelic}, we transpose local equidistribution results on $v$-adic Julia sets into global information holding uniformly across a large proportion of the places of bad reduction. When $f$ has a very large number of $K$-rational preperiodic points, typical elements of the form $P_i-P_j$ for preperiodic points $P_i,P_j\in K$ have relatively little prime support at the places of good reduction for $f$. Furthermore, the hexagons considered have side lengths that behave in a suitably random manner across the places of bad reduction. These two global phenomena, each originating from local equidistribution behavior, ultimately furnish the aforementioned special hexagon. The proof in the cubic and quartic cases can be simplified, with quadrilaterals replacing hexagons. When $d\ge5$, a slightly modified version of these strategies, using triangles and the $abc$-conjecture, suffices to prove the theorem. In the interest of generating as many approaches to Conjecture \ref{conj:MortonSilverman} as possible, however, we choose to exhibit an independent, purely algebraic proof that is applicable to this case.

Conjecture \ref{conj:nconj} is used to circumvent the main obstacle in proving results in the direction of Conjecture \ref{conj:MortonSilverman}: proving the uniform boundedness of $K$-rational preperiodic points across families $\mathcal{F}$ of maps defined over $K$ \emph{without} imposing a bound on the number of places of bad reduction of the maps $f\in\mathcal{F}$, or stipulating that some place `dominates' $h(c)$ by a given amount (see \S\ref{section:easycase}). We take a novel approach to avoid this limitation, at the expense of assuming the $abc$-conjecture. Many of the arguments used for $2\le d\le4$ can be generalized to other families of polynomial maps.

It is interesting to note that Frey \cite{Frey1,Frey2} uses the $abc$-conjecture to prove uniform bounds on torsion points on elliptic curves over number fields, while Hindry and Silverman \cite{HindrySilverman} use the $abc$-conjecture to prove uniform bounds on integral points on elliptic curves of bounded rank. This observation, coupled with the methods in this paper, leads to a reasonable expectation that the $abc$-conjecture and its relatives in Diophantine geometry form an auspicious avenue for proving uniform boundedness across one-parameter families of maps.

\indent\textbf{Acknowledgements.} I would like to thank Rob Benedetto, Laura DeMarco, Holly Krieger, Joe Silverman, and Tom Tucker for useful discussions relating to this project. I thank Holly Krieger in particular for extensive and fruitful conversations regarding the arguments presented in \S\ref{section:potentialthy}, and Joe Silverman for his many helpful comments on a draft of this article.

\section{Background}

\subsection{Notation} We set the following notation:

\setlength{\tabcolsep}{15pt}

\begin{tabular}{r p{10cm}}
	
	$K$ & a number field or a one-variable function field, i.e., a finite extension of a field $k(t)$ of rational functions in one variable over $k$\\ $M_K$ & a complete set of inequivalent places of $K$, with absolute values $|\cdot|_v$ normalized to extend the standard absolute values on $\QQ$ if $K$ is a number field, or $k(t)$ if $K$ is a function field\\ $M_K^0$ & the set of nonarchimedean places of $K$  \\ $M_K^\infty$ & the set of archimedean places of $K$ \\ 
	$k_{\mathfrak{p}}$ & the residue field associated to the finite prime $\mathfrak{p}$ of $K$ \\
	$N_\mathfrak{p}$ & $\fr{\log(\#k_\mathfrak{p})}{[K:\QQ]}$ if $K$ is a number field, \\ & $\fr{[k_\mathfrak{p}:k]}{[K:k(t)]}$ if $K/k(t)$ is a function field \\ $r_v$ & $\fr{[K_v:\QQ_v]}{[K:\QQ]}$ if $K$ is a number field, \\ & $\fr{[K_v:k(t)_v]}{[K:k(t)]}$ if $K/k(t)$ is a function field \\ $\lambda_v(\cdot)$ & $\log\max\{1,|\cdot|_v\}$ \\ $v_\mathfrak{p}$ & the standard $\mathfrak{p}$-adic valuation on $K$
\end{tabular} 
\newline

\noindent For all $v=\mathfrak{p}\in M_K^0$, one has \[r_v\lambda_v(z)=-\min\{0,v_\mathfrak{p}(z)\}N_\mathfrak{p}.\] Let $\mathcal{O}_K$ denote the ring of integers if $K$ is a number field, and the valuation ring if $K$ is a function field.
If $K$ is a number field, $n\ge 2$ and $P=(z_1,\dots,z_n)\in\mathbb{P}^{n-1}(K)$ with $z_1,\dots,z_n\in K$, let \begin{equation*}\begin{split}h(P)=&\sum_{\textup{primes }\mathfrak{p} \textup{ of } \mathcal{O}_K} -\min\{v_{\mathfrak{p}}(z_1),\dots,v_{\mathfrak{p}}(z_n)\}N_{\mathfrak{p}}\\&+\dfrac{1}{[K:\QQ]} \sum_{\sigma:K\hookrightarrow\CC} \log\max\{|\sigma(z_1)|,\dots,|\sigma(z_n)|\},\end{split}\end{equation*} where we do not identify conjugate embeddings. (We choose to express the height in this form, which separates the nonarchimedean and archimedean contributions, for convenience in applying the $abcd$-conjecture.) If $K$ is a function field, let \[h(P)=\sum_{\textup{primes }\mathfrak{p} \textup{ of } \mathcal{O}_K} -\min\{v_{\mathfrak{p}}(z_1),\dots,v_{\mathfrak{p}}(z_n)\}N_{\mathfrak{p}}.\] For any $P=(z_1,\dots,z_n)\in\mathbb{P}^{n-1}(K)$ with $z_1,\dots,z_n\in K^*$, we define \[I(P)=\{\textup{primes }\mathfrak{p} \textup{ of } \mathcal{O}_K\mid v_{\mathfrak{p}}(z_i)\ne v_{\mathfrak{p}}(z_j)\textup{ for some } 1\le i,j\le n\}\] and let \[\textup{rad}(P)=\sum_{\mathfrak{p}\in I(P)} N_{\mathfrak{p}}.\] 

\subsection{The $abcd$-conjecture} In order to address the cases $2\le d\le4$ in Theorem \ref{thm:UBCunicrit}, we will use a generalization of the $abc$-conjecture. The standard $abc$-conjecture corresponds to the case $n=3$. 

\begin{conj}[The $abcd$-conjecture]{\label{conj:nconj}} Let $K$ be a number field or a one-variable function field of characteristic zero, and let $n\ge 3$. Let $[Z_1:\cdots:Z_n]$ be the standard homogeneous coordinates on $\mathbb{P}^{n-1}(K)$, and let $\mathcal{H}$ be the hyperplane given by $Z_1+\dots+Z_n=0$. For any $\epsilon>0$, there is a proper Zariski closed subset $\mathcal{Z}=\mathcal{Z}(K,\epsilon,n)\subsetneq\mathcal{H}$ and a constant $C_{K,\mathcal{Z},\epsilon,n}$ such that for all $P=(z_1,\dots,z_n)\in\mathcal{H}\setminus\mathcal{Z}$ with $z_1,\dots,z_n\in K^*$, we have \[h(P)<(1+\epsilon)\textup{rad}(P)+C_{K,\mathcal{Z},\epsilon,n}.\]\end{conj}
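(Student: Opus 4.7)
Because Conjecture \ref{conj:nconj} is itself an open problem, the only meaningful ``proof'' available is the reduction the author alludes to: deriving the statement from Vojta's conjecture with truncated counting function applied to a very explicit pair $(\mathcal{H}, D)$. My plan is as follows.

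First, I would identify the geometric data. The hyperplane $\mathcal{H} = \{Z_1+\cdots+Z_n=0\}\subset\mathbb{P}^{n-1}$ is isomorphic to $\mathbb{P}^{n-2}$, and the divisors $D_i := \mathcal{H}\cap\{Z_i=0\}$ are $n$ hyperplanes in $\mathcal{H}$ in general position: if $n-1$ of them met at a point, then $n-1$ coordinates would vanish there, forcing the last also to vanish by $\sum z_i=0$, which is impossible. Hence $D := D_1+\cdots+D_n$ is a reduced simple normal crossings divisor on $\mathcal{H}\cong\mathbb{P}^{n-2}$. A standard computation gives $K_\mathcal{H}\sim -(n-1)H$ and $D\sim nH$, so $K_\mathcal{H}+D\sim H$ is ample and the associated height agrees with the height $h(P)$ of the paper up to $O(1)$.

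Next, I would match the truncated counting function $N^{(1)}(D,P)$ with $\textup{rad}(P)$. At a finite prime $\mathfrak{p}$, after choosing a representative $(z_1,\ldots,z_n)$ with $\min_j v_\mathfrak{p}(z_j)=0$, a local defining equation of $D$ near $P$ is $\prod_i z_i$ (up to a unit), so the truncated local term $\min\{1,\, v_\mathfrak{p}(\prod_i z_i)\}\,N_\mathfrak{p}$ equals $N_\mathfrak{p}$ precisely when $v_\mathfrak{p}(z_i)>0$ for some $i$, i.e., when $\mathfrak{p}\in I(P)$. Summing over finite primes yields $N^{(1)}(D,P)=\textup{rad}(P)$. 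Vojta's conjecture then produces, for every $\epsilon>0$, a proper Zariski-closed $\mathcal{Z}\subsetneq\mathcal{H}$ and a constant $C$ with $h_{K_\mathcal{H}+D}(P)\leq(1+\epsilon)N^{(1)}(D,P)+C$ off $\mathcal{Z}$; combining the two identifications gives exactly the statement of Conjecture \ref{conj:nconj}.

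The primary obstacle---aside from the unresolved status of Vojta's conjecture itself---is the bookkeeping of the truncated counting function on a reducible divisor. One must confirm that Vojta's convention counts each prime exactly once regardless of how many components $D_i$ the point $P$ meets there, matching the definition $\textup{rad}(P)=\sum_{\mathfrak{p}\in I(P)} N_\mathfrak{p}$; an alternative convention summing $N^{(1)}(D_i,P)$ component-wise could produce a quantity as large as $(n-1)\textup{rad}(P)$ and would yield only a strictly weaker bound. Once these conventions are pinned down consistently across the number field and function field cases, and the proper Zariski-closed exceptional set is tracked through the reduction, the derivation is routine.
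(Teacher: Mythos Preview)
Your proposal is correct and follows essentially the same route as the paper: both take $X=\mathcal{H}\cong\mathbb{P}^{n-2}$, $D=\sum_i[Z_i=0]$ a normal crossings divisor with $K_{\mathcal{H}}+D\sim H$, verify $N_S^{(1)}(D,P)=\textup{rad}(P)$, and then apply Vojta's truncated conjecture. The only cosmetic difference is that the paper states Vojta in the form $N_S^{(1)}(D,P)\ge h_{K_X+D}(P)-\epsilon h_A(P)+O(1)$ (giving $(1-\epsilon)h(P)\le\textup{rad}(P)+C$) rather than your $h_{K_{\mathcal{H}}+D}(P)\le(1+\epsilon)N^{(1)}(D,P)+C$, but since $K_X+D\sim A$ here these are equivalent after rescaling $\epsilon$.
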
 \begin{rmk} For our purposes in proving Theorem \ref{thm:UBCunicrit}, the $1+\epsilon$ appearing here cannot simply be replaced by a larger constant. Thus, for example, the results of \cite{BrownawellMasser} and \cite{Voloch} do not suffice to give an unconditional proof of Theorem \ref{thm:UBCunicrit} when $K$ is a function field and $2\le d\le4$.\end{rmk} 

We now show how one can derive Conjecture \ref{conj:nconj} from Vojta's conjecture with truncated counting function, expanding upon an argument discussed in \cite{Vojta}. Vojta's conjecture concerns the approximation of certain varieties by algebraic points in an ambient smooth projective variety $X$ defined over $K$. For a divisor $D\in\text{Div}(X)$ and $v\in M_K$, let $\lambda_{D,v}$ be a $v$-adic local height on $(X\setminus D)(K_v)$ relative to $D$. (For background on local height functions, see \cite[Chapter B.8]{HindrySilverman:DiophantineGeometry}.) For $P\in X(\overline{K})$, let $h_D(P)=\sum_{v\in M_K}r_v\lambda_{D,v}(P)$. If $L/K$ is a finite extension, and $\textbf{D}_{L/K}$ denotes the discriminant ideal of $L/K$ with respect to $\mathcal{O}_L$, let \[d_{L/K}=\fr{1}{[L:K]}\log\prod_{v\in M_K^0}|\textbf{D}_{L/K}|_v^{-r_v}\] be the logarithmic discriminant of $L/K$. We say that a reduced effective divisor $D\in\textup{Div}(X)$ is a \emph{normal crossings divisor} if $D=\sum_{i=1}^r D_i$ for irreducible subvarieties $D_i$, and the variety $\cup_{i=1}^r D_i$ has normal crossings.

\begin{definition*} Let $S\subset M_K$ be a finite set of places of $K$ containing $M_K^\infty$. For $P\in X(\overline{K})\setminus D$, and $\lambda_{D,\mathfrak{p}}$ a set of local height functions relative to $D$, the arithmetic truncated counting function is \[N_S^{(1)}(D,P)=\sum_{\substack{v=\mathfrak{p}\in M_{K(P)}\\ \mathfrak{p}\nmid S}}
	\chi(\lambda_{D,\mathfrak{p}}(P))N_\mathfrak{p}\] where for $a\in\mathbb{R}$, \[\chi(a)=\begin{cases}0 & \text{if }a\le0\\ 1 & \text{if }a>0.\end{cases}\]\end{definition*}

The form of Vojta's conjecture we consider is as follows.

\begin{conj}\cite[Conjecture 2.3]{Vojta}\label{conj:Vojta} Let $K$ be a number field or a one-variable function field of characteristic zero, and let $S$ be a finite set of places of $K$ containing the archimedean places. Let $X$ be a smooth projective variety over $K$, let $D$ be a normal crossings divisor on $X$, let $K_X$ be a canonical divisor on $X$, let $A$ be an ample divisor on $X$, let $r\in\mathbb{Z}_{>0}$, and let $\epsilon>0$. Then there exists a proper Zariski closed subset $\mathcal{Z}=\mathcal{Z}(K,S,X,D,A,r,\epsilon)\subsetneq X$ such that \begin{equation*}N_S^{(1)}(D,P)\ge h_{K_X+D}(P)-\epsilon h_A(P)-d_{K(P)/K}+O(1)\end{equation*} for all $P\in X(\overline{K})\setminus\mathcal{Z}$ with $[K(P):K]\le r$.
\end{conj}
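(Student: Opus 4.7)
Conjecture \ref{conj:Vojta} is one of the central open problems of Diophantine geometry, so rather than claim a complete proof I will sketch the framework within which any serious attempt must operate, following the philosophy Vojta laid out in his thesis. The plan is to reformulate the desired inequality as an approximation statement for the divisor $D$ by $\overline{K}$-points of bounded degree, express the canonical and ample heights in terms of the same local data, and then invoke the deepest available Diophantine approximation theorems as templates.

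First I would reduce to the model case $X = \mathbb{P}^n$ with $D$ a sum of at least $n+2$ hyperplanes in general position, so that $K_X + D$ is effective of the right shape. Here the untruncated analog of the desired inequality, with $N_S^{(1)}$ replaced by the full counting function, is precisely Schmidt's subspace theorem, and the exceptional locus $\mathcal{Z}$ corresponds to the union of the finitely many proper linear subspaces produced by that theorem. The bridge from the untruncated to the truncated counting function requires that the contribution of primes at which $\lambda_{D,\mathfrak{p}}(P)$ is large be controlled by only the \emph{number} of such primes; the function field version of this step is the Mason--Stothers theorem, while the number field version is essentially the abc-conjecture itself.

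Second, for general smooth projective $X$ and normal crossings $D$, I would embed $X$ in projective space via sections of a sufficiently positive multiple of $K_X + D + mA$, pull the hyperplane arrangement statement back along this embedding, and absorb the discrepancy between intrinsic and extrinsic local heights into the $\epsilon h_A(P)$ error term. To control the passage to algebraic points of bounded degree $[K(P):K] \le r$, I would use the logarithmic discriminant $d_{K(P)/K}$ to account for ramification in the field of definition, following the geometric-number-theoretic dictionary of Arakelov theory for combining the archimedean and nonarchimedean contributions into a single additive inequality.

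The main obstacle is enormous: the truncated form of Vojta's inequality is already known to imply the abc-conjecture in the case $X = \mathbb{P}^1$, and it is open in essentially all cases of dimension $\ge 2$. A genuine proof would require an extension of the Schmidt--Ru--Vojta subspace machinery that tracks multiplicities with the precision of Mason--Stothers, coupled with a Faltings-style product argument sharp enough to handle all bounded-degree points uniformly in $r$. No currently available Diophantine approximation technique combines these features, which is precisely why Conjecture \ref{conj:Vojta} is assumed in this paper rather than derived, and why our Theorem \ref{thm:UBCunicrit} is stated as conditional on (the much weaker) Conjecture \ref{conj:nconj}.
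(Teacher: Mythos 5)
You are right not to attempt a proof here: the statement is Vojta's conjecture with truncated counting function, quoted verbatim from \cite{Vojta}, and the paper supplies no proof of it — it is a hypothesis, used only to derive Conjecture \ref{conj:nconj} by specializing $X$ to the hyperplane $\mathcal{H}\subset\mathbb{P}^{n-1}$, $D=[Z_1Z_2\cdots Z_n=0]$, $A=[Z_n=0]$, $K_X=-(n-1)A$. Your treatment of it as open, with the surrounding context sketched, matches the paper's own treatment exactly; the only genuinely checkable content in this part of the paper is that derivation of the $abcd$-conjecture from Conjecture \ref{conj:Vojta} (equations (\ref{eqn:rad}) and (\ref{eqn:ampledivisor})), which is a different statement from the one you were asked about.
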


Let $n\ge3$, let $X=\mathcal{H}$ be the hyperplane $Z_1+\dots+Z_n=0$ in $\mathbb{P}^{n-1}(\overline{K})$, and let $A$ be the hyperplane divisor $[Z_n=0]$ on $X$. The divisor $-(n-1)A$ is a canonical divisor of $X$; thus, we set $K_X=-(n-1)A$. Let $P=(z_1,\dots,z_n)\in\mathbb{P}^{n-1}(K)$. We will take \[\lambda_{A,v}(P)=\log\max_{1\le j\le n}\left|\frac{z_j}{z_n}\right|_v\] as a local height function relative to $A$. Let $D$ be the degree $n$ normal crossings divisor on $X$ given by $[Z_1Z_2\cdots Z_n=0]$. We will take as a local height function relative to $D$: \[\lambda_{D,v}(P)=\log\max_{1\le j\le n}\left|\frac{z_j^{n}}{z_1\cdots z_n}\right|_v.\] Then for $S=M_K^\infty$ and $P=(z_1,\dots,z_n)\in \mathbb{P}^{n-1}(K)$ with $z_1,\dots,z_n\in K^*$, we have \begin{equation}{\label{eqn:rad}}N_S^{(1)}(D,P)=\textup{rad}(P).\end{equation} Moreover, \begin{equation}\label{eqn:ampledivisor}h_D(P)=nh_A(P)=nh(P).\end{equation} Let $\epsilon>0$. It follows from Conjecture \ref{conj:Vojta} along with (\ref{eqn:rad}) and (\ref{eqn:ampledivisor}) that there is a proper Zariski closed $\mathcal{Z}=\mathcal{Z}(K,n,\epsilon)\subsetneq\mathcal{H}$ and a constant $C_{K,\mathcal{Z},\epsilon,n}$ such that for all $P=(z_1,\dots,z_n)\in\mathcal{H}(K)\setminus\mathcal{Z}$ with $z_1,\dots,z_n\in K^*$, we have \[(n-(n-1)-\epsilon)h(P)\le \textup{rad}(P)+C_{K,\mathcal{Z},\epsilon,n}.\]

\subsection{Nonarchimedean potential theory} For $v\in M_K$, let $\CC_v$ denote the $v$-adic completion of an algebraic closure of $K_v$. We denote open and closed disks in $\CC_v$ as follows: \[D(a,r)=\{z\in\CC_v:|z-a|_v\le r\},\] \[D(a,r)^-=\{z\in\CC_v:|z-a|_v< r\}.\] Unless otherwise specified, we impose the convention that disks have radius belonging to the value group $|\CC_v^{\times}|$. An \emph{annulus} in $\CC_v$ is a set of the form \[A=\{z\in\CC_v:0<r_1<|z-a|_v<r_2\},\] with $r_1,r_2\in|\CC_v^{\times}|$. Its \emph{modulus} is given by $\textup{mod}(A)=\log(r_2/r_1)$. For $f(z)\in K[z]$ and $z\in\C_v$, let \[\hat{\lambda}_v(z)=\lim_{n\to\infty} \fr{1}{d^n}\lambda_v(f^n(z))\] be the standard $v$-adic escape-rate function. (See \cite[\S3.4, 3.5]{Silverman3} for a proof that the limit defining $\hat{\lambda}_v(z)$ exists.) Note that $\hat{\lambda}_v(z)$ obeys the transformation rule \begin{equation*}\hat{\lambda}_v(f(z))=d\hat{\lambda}_v(z)\end{equation*} for all $z\in \CC_v$. 

Let $v\in M_K^0$. For every $g\in\mathbb{R}_{>0}$, Proposition \ref{prop:nonarchdiskpreimage} implies that the set of points $z\in\CC_v$ such that $\hat{\lambda}_v(z)\le g$ is a finite union of $r_g$ closed disks. Call $g\in\mathbb{R}_{>0}$ a \emph{splitting potential} of $f$ if $r_{g'}>r_g$ for any $g'<g$. If $g_0$ is the greatest splitting potential of $f$, the set \[A_0=\{z\in\CC_v: g_0<\hat{\lambda}_v(z)<dg_0\}\] is called the \emph{fundamental annulus} of $f$. A disk $D_1\subseteq\CC_v$ \emph{lies at distance $r$} from a disjoint disk $D_2\subseteq\CC_v$ if for any points $z_1\in D_1$, $z_2\in D_2$, we have $|z_1-z_2|_v=r$. 

In order to carry out our potential theoretic analysis in the nonarchimedean setting, we work in Berkovich space. For $v\in M_K^0$, the Berkovich affine line $\textbf{A}_v^1$ over $\CC_v$ is the collection of equivalence classes of multiplicative seminorms on $\CC_v[T]$ which extend the norm $|\cdot|_v$ on $\CC_v$. Let $[\phi]_{D(a,r)}=\sup_{z\in D(a,r)}|\phi(z)|_v$ denote the sup-norm on the disk $D(a,r)$. The Berkovich classification theorem (see \cite[Theorem 2.2]{BakerRumely}) states that each seminorm $[\cdot]_x$ corresponds to an equivalence class of sequences of nested closed disks $\{D(a_i,r_i)\}$ in $\CC_v$, by the identification \[[\phi]_x:=\lim_{i\to\infty}[\phi]_{D(a_i,r_i)}.\] We define the action of polynomial $f\in\CC_v[T]$ on a point $x\in\textbf{A}_{\CC_v}^1$ by $[\phi]_{f(x)}=[\phi\circ f]_x$ for $\phi\in\CC_v[T]$. 

For $a\in\CC_v$, we define open and closed Berkovich disks of radius $r$ \[\mathcal{B}(a,r)^-=\{x\in\textbf{A}_v^1:[T-a]_x<r\},\] \[\mathcal{B}(a,r)=\{x\in\textbf{A}_v^1:[T-a]_x\le r\},\] corresponding to the classical disks $D(a,r)^-$ and $D(a,r)$ respectively. A basis for the open sets of $\textbf{A}_v^1$ is given by sets of the form $\mathcal{B}(a,r)^-$ and $\mathcal{B}(a,r)^-\setminus\cup_{i=1}^N\mathcal{B}(a_i,r_i)$, where $a,a_i\in \CC_v$ and $r,r_i>0$. This topology is called the \emph{Berkovich topology}. We consider $\textbf{A}_v^1$ as a measure space whose Borel $\sigma$-algebra is generated by this topology. Proposition \ref{prop:nonarchmoduli} together with \cite[Lemma 9.12]{BakerRumely} imply that if $D\subseteq\CC_v$ is a disk, and $f^{-1}(D)=\cup_iD_i$, then $f^{-1}(\mathcal{B})=\cup_i\mathcal{B}_i$, where $\mathcal{B}$ and $\mathcal{B}_i$ correspond to the classical disks $D$ and $D_i$ respectively. Let $\delta_v(z,w)$ denote the Hsia kernel relative to infinity (see \cite[Section 4.1]{BakerRumely}). A Berkovich disk $\mathcal{B}(a,r)$ is said to \emph{lie at distance} $R$ from a disjoint Berkovich disk $\mathcal{B}(b,s)$ if $\delta_v(z,w)=R$ for all $z\in\mathcal{B}(a,r), w\in\mathcal{B}(b,s)$. (If the disks are to be disjoint, we must have $R=|a-b|_v>\max\{r,s\}$.) This holds if and only if $|z-w|_v=R$ for all Type I points $z\in\mathcal{B}(a,r), w\in\mathcal{B}(b,s)$. The Berkovich $v$-adic filled Julia set of $f(z)\in K[z]$ is defined as \[\mathcal{K}_v=\bigcup_{M>0}\{x\in\textbf{A}_v^1:[f^n(z)]_x\le M\text{ for all } n\ge0\}.\]

Let $v\in M_K^0$, let $E\subseteq\mathbf{A}_v^1$, and let $\nu$ be a probability measure with support contained in $E$. The potential function of $\nu$ is by definition \[p_\nu(z)=\int_E-\log\delta_v(z,w)d\nu(w),\] and the energy integral of $\nu$ is \[I(\nu)=\int_E p_\nu(z)d\nu(z).\] The integrals here are Lebesgue integrals; the function $\delta_v(z,w)$ is upper semicontinuous (\cite[Proposition 4.1(A)]{BakerRumely}), so $-\log\delta_v(z,w)$ is lower semicontinuous, and hence Borel measurable relative to the $\sigma$-algebra generated by the Berkovich topology. 
The capacity of $E$ is \[\gamma_v(E):=e^{-\inf_\nu I(\nu)}.\] If $E$ is compact and $\gamma_v(E)>0$, there is a unique probability measure $\mu_E$ on $E$ for which $I(\mu_E)=\inf_\nu I(\nu)$ \cite[Proposition 7.21]{BakerRumely}. This measure $\mu_E$ is called the \emph{equilibrium measure} for $E$. When $E$ is compact, the capacity coincides with the quantity \[\lim_{n\to\infty}\sup\left\{\prod_{i\ne j}\delta_v(z_i,z_j)^{1/(n(n-1))}:z_1,\dots,z_n\in E\right\},\] which is known as the transfinite diameter of $E$ \cite[Theorem 6.24]{BakerRumely}. For a set $T\subseteq\mathbf{A}_v^1$ of $n$ points $z_1,\dots,z_n$, let \[d_v(T):=\prod_{i\ne j}\delta_v(z_i,z_j)^{1/(n(n-1))}.\] 

\section{Equidistribution and dynamics}{\label{section:potentialthy}}

Having introduced the key arithmetic and analytic objects used in proving Theorem \ref{thm:UBCunicrit}, we turn to dynamical considerations.

We fix throughout a degree $d\ge2$ and a product formula field $K$ (so that $K$ is either a number field or a one-variable function field by \cite[Theorem 3]{ArtinWhaples}), and let $f(z)=z^d+c\in K[z]$. The $n$-th iterate of $f$ will be denoted by $f^n$. For $\alpha\in\mathbb{P}^1(\overline{K})$, the \emph{forward orbit} of $\alpha$ is the set $\{f^n(\alpha)\}_{n=0}^\infty$. A point $\alpha\in\mathbb{P}^1(\overline{K})$ is said to be \emph{preperiodic} if its forward orbit is finite, and \emph{periodic} if $f^n(\alpha)=\alpha$ for some $n\ge1$. We say $f(z)=z^d+c\in K[z]$ has \emph{bad reduction at $v\in M_K^0$}, or alternatively that $v$ is a \emph{bad place}, if $\lambda_v(c)>0$. If $v\in M_K^0$ and $\lambda_v(c)=0$, then we say that $f$ has \emph{good reduction at $v$}. We introduce a notion measuring the size of a set of bad places for a given $f(z)=z^d+c\in K[z]$.

\begin{definition*} For $0<\delta<1$, and $\Sigma\subseteq M_K^0$, a \emph{$\delta$-slice of places} $v\in\Sigma$ is a set $S$ of bad places $v\in\Sigma$ of $f$ such that \[\sum_{v\in S}r_v\lambda_v(c)\ge\delta\sum_{v\in\Sigma}r_v\lambda_v(c).\]\end{definition*}
We will make use of the fact that pre-images of disks under polynomials behave nicely in the nonarchimedean setting. Moduli of annuli also transform functorially under covering maps, just as in the archimedean setting. \begin{prop}\cite[Lemma 2.7]{Benedetto}{\label{prop:nonarchdiskpreimage}} Let $v\in M_K^0$, and let $D^-\subseteq\C_v$ be an open disk, and let $\phi\in\C_v[z]$ be a polynomial of degree $d\ge 1$. Then $\phi^{-1}(D^-)$ is a disjoint union $D_1^-\cup\cdots\cup D_m^-$ of open disks, with $1\le m\le d$. Moreover, for each $i=1,\dots,m$, there is an integer $1\le d_i\le d$ such that every point in $D^-$ has exactly $d_i$ pre-images in $D_i^-$, counting multiplicity, and that $d_1+\cdots+d_m=d$. The foregoing also holds with the open disks $D^-$, $D_i^-$ replaced by closed disks $D$, $D_i$.
\end{prop}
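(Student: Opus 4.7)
The plan is to reduce the problem to analyzing a superlevel set of $|P|_v$. Writing $D^-=D(a,r)^-$ and $P(z)=\phi(z)-a$, one has $\phi^{-1}(D^-)=\{z\in\CC_v:|P(z)|_v<r\}$, and I factor $P(z)=c_d\prod_{j=1}^{d}(z-\beta_j)$ over $\CC_v$ with the $\beta_j$ repeated according to multiplicity. The strategy is to attach to each root $\beta_j$ a certain maximal open disk $U_j$ on which $\phi$ maps into $D^-$, and then show that the distinct $U_j$'s are pairwise disjoint and exhaust the preimage.

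The central input is the nonarchimedean fact that a polynomial maps an open (resp.\ closed) disk to an open (resp.\ closed) disk. Expanding $\phi(z)=\phi(\beta)+\sum_{i=1}^{d}a_i(z-\beta)^i$ around $\beta\in\CC_v$, set $M_\beta(\rho):=\max_{1\le i\le d}|a_i|_v\rho^i$; this is continuous, strictly increasing, and unbounded in $\rho$, and one verifies $\phi(D(\beta,\rho)^-)=D(\phi(\beta),M_\beta(\rho))^-$. For each root $\beta_j$, define $U_j:=D(\beta_j,\rho_j)^-$ where $\rho_j$ is the unique positive real with $M_{\beta_j}(\rho_j)=r$, so that $U_j$ is the largest open disk centered at $\beta_j$ with $\phi(U_j)\subseteq D^-$. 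By the ultrametric dichotomy---two open disks in $\CC_v$ are either disjoint or nested---combined with the maximality in the definition of the $\rho_j$'s, whenever $U_j\cap U_k\ne\varnothing$ one must have $U_j=U_k$. Thus the distinct disks form a disjoint collection $U_{i_1},\ldots,U_{i_m}$ with $1\le m\le d$, and if $U_{i_k}$ contains $d_k$ roots counted with multiplicity, then $\sum_{k=1}^{m}d_k=d$.

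It remains to check two things. First, $\phi^{-1}(D^-)=\bigsqcup_{k}U_{i_k}$: the inclusion $\supseteq$ is built into the construction, while for $\subseteq$, given $z$ with $|P(z)|_v<r$, I pick a root $\beta_j$ minimizing $|z-\beta_j|_v$ and apply the ultrametric inequality to the product expansion of $|P(z)|_v$ to show $M_{\beta_j}(|z-\beta_j|_v)<r$ and hence $z\in U_j$. Second, for each $k$ the restriction $\phi|_{U_{i_k}}\colon U_{i_k}\to D^-$ is surjective with every fiber having $d_k$ preimages counted with multiplicity: surjectivity follows from $\phi(U_{i_k})=D^-$ by the disk-to-disk property together with the choice of $\rho_{i_k}$, while the fiber count equals $d_k$ at $w=a$ by construction. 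The claim for closed disks is identical.

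The main obstacle is verifying that the fiber count in the second point remains equal to $d_k$ for every $w\in D^-$, not just at $w=a$. This is a nonarchimedean Rouch\'{e}-type statement, most naturally established via the Newton polygon of $\phi(z)-w$ restricted to $U_{i_k}$: because $|\phi(z)-a|_v=r$ along the boundary of $U_{i_k}$ while $|w-a|_v<r$, the low-slope segments of the Newton polygon that correspond to roots inside $U_{i_k}$ are unchanged as $w$ varies, so the $d_k$ roots neither enter nor leave the disk. This stability of the root count under perturbation is the key technical input and is where one pays careful attention to the nonarchimedean analysis.
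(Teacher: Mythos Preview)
The paper does not give its own proof of this proposition; it is quoted verbatim from \cite[Lemma~2.7]{Benedetto} and used as a black box. Your argument is correct and is essentially the standard proof one finds in the literature: factor $\phi(z)-a$, use the fact that a polynomial sends $D(\beta,\rho)^-$ onto $D(\phi(\beta),M_\beta(\rho))^-$ to build the maximal disks $U_j$, and invoke the Newton polygon for the constancy of the fibre count.

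Two small remarks on the write-up. First, your sentence ``pick a root $\beta_j$ minimizing $|z-\beta_j|_v$ and apply the ultrametric inequality to the product expansion'' deserves one more line: with $s=|z-\beta_j|_v$ minimal, the ultrametric forces $|z-\beta_k|_v=\max(s,|\beta_j-\beta_k|_v)$ for every $k$, and the right-hand side is exactly the $k$-th factor in $M_{\beta_j}(s)=|c_d|_v\prod_k\max(s,|\beta_j-\beta_k|_v)$; hence $M_{\beta_j}(s)=|P(z)|_v<r$, which is precisely what you need. Second, the phrase ``$|\phi(z)-a|_v=r$ along the boundary of $U_{i_k}$'' is heuristic in $\CC_v$ (there is no boundary); the rigorous statement is the Newton-polygon one you then give: expanding $\phi(z)-w$ about $\beta_{i_k}$, the coefficients $a_i$ for $i\ge1$ are fixed and satisfy $\max_{i\ge1}|a_i|_v\rho_{i_k}^i=r$, while the constant term has absolute value $|a-w|_v<r$, so the extremal index (smallest for the open disk, largest for the closed disk) determining the root count in $U_{i_k}$ is independent of $w\in D^-$. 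With these two points made explicit, your proof is complete.
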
 The $D_i^-$ in Proposition \ref{prop:nonarchdiskpreimage} are referred to as the \emph{disk components} of $\phi^{-1}(D^-)$. 

\begin{prop}\cite[Corollary 2.6]{Baker}{\label{prop:nonarchmoduli}} If $v\in M_K^0$, and $\phi\in\C_v(z)$ is a degree $k$ covering map $\phi:A_1\to A_2$ of annuli in $\C_v$, then \[\textup{mod}(A_1)=\fr{1}{k}\textup{mod}(A_2).\] \end{prop}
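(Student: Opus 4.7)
The plan is to reduce $\phi$ on $A_1$ to a single dominant monomial via a Gauss-norm analysis, after which the modulus identity follows from a one-line calculation. I would first exploit the fact that in the nonarchimedean setting any point in the ``hole'' of an annulus serves as a center, so after translation we may assume $A_1 = \{z \in \C_v : r_1 < |z|_v < r_2\}$ and $A_2 = \{w \in \C_v : s_1 < |w|_v < s_2\}$ are both centered at $0$. Since $\phi$ has no poles on the open annulus $A_1$, its restriction there admits a convergent Laurent expansion $\phi(z) = \sum_{n \in \Z} c_n z^n$ with only finitely many negative-index terms.

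Second, I would study the Gauss-norm function $M(r) := \sup_n |c_n|_v r^n$. For every $r \in (r_1, r_2)$ this equals $\sup_{|z|_v = r} |\phi(z)|_v$, and when the supremum is attained at a unique index $n_0$ one in fact has $|\phi(z)|_v = |c_{n_0}|_v r^{n_0}$ pointwise on the circle $|z|_v = r$. The function $t \mapsto \log M(e^t)$ is the upper convex hull of the linear functions $t \mapsto \log|c_n|_v + nt$, hence piecewise linear and convex on $(\log r_1, \log r_2)$ with integer slopes equal to the dominant indices.

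Third, I would argue that the covering hypothesis pins down this convex function to a single slope $\pm k$. At any interior kink $t_0$, two monomials $c_{n_1} z^{n_1}$ and $c_{n_2} z^{n_2}$ would tie in absolute value on the circle $|z|_v = e^{t_0}$, so they can cancel to produce values of $\phi$ strictly inside the expected disk; the preimage in $A_1$ of a small disk around such a value would then fail to be a disjoint union of disks, contradicting Proposition \ref{prop:nonarchdiskpreimage} applied to the cover $\phi : A_1 \to A_2$. Hence $\log M(e^t)$ has a single integer slope $m$ on $(\log r_1, \log r_2)$, and a Newton-polygon count of the roots of $\phi(z) - w$ for generic $w \in A_2$ forces $|m| = k$. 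Since $M$ then sends the endpoint radii $\{r_1, r_2\}$ onto $\{s_1, s_2\}$, we obtain $|\log(s_2/s_1)| = k \cdot |\log(r_2/r_1)|$, i.e.\ $\textup{mod}(A_2) = k \cdot \textup{mod}(A_1)$, which is the desired identity.

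The main obstacle is the third step: carefully ruling out interior slope changes of the Gauss-norm profile on $A_1$, and identifying the resulting single slope with the covering degree $k$. Once this ``monomiality up to units'' of $\phi$ on $A_1$ is established, the modulus computation is a one-liner; the substantive content is that nonarchimedean covers of annuli are essentially power maps, which is exactly what the piecewise-linear/Newton-polygon analysis encodes.
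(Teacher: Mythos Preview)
The paper does not prove this proposition; it is quoted directly from \cite[Corollary~2.6]{Baker} with no argument given. Your overall strategy---center both annuli at $0$, expand $\phi$ in a Laurent series, study the piecewise-linear convex profile $t\mapsto\log M(e^t)$, and show it has a single slope $\pm k$---is the standard one and is correct in outline, so there is nothing to compare against the paper beyond noting that you are supplying what the paper outsources.

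The genuine gap is in your third step. Your appeal to Proposition~\ref{prop:nonarchdiskpreimage} is misplaced on two counts: that proposition is stated for polynomials $\phi\in\C_v[z]$ acting on all of $\C_v$, not for rational $\phi\in\C_v(z)$ restricted to an annulus; and in any case it \emph{asserts} that preimages of disks are disjoint unions of disks, so it cannot be ``contradicted'' in the way you describe. Your cancellation observation is suggestive but does not by itself yield a contradiction: at a kink radius the two dominant monomials cancel at finitely many points and $|\phi(z)|_v$ drops below $M(r_0)$ there, but nothing in your argument forces those values to fall outside $(s_1,s_2)$, nor do you explain what structural property of the restricted preimage would fail. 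For example, with $\phi(z)=z^2+az$ one can check the fiber cardinality over $A_2$ stays constant across the kink at $|z|_v=|a|_v$; what actually breaks is that $\phi(-a)=0\notin A_2$, so $\phi(A_1)\not\subseteq A_2$---but establishing this in general requires more than pointing to Proposition~\ref{prop:nonarchdiskpreimage}.

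The clean fix, and the content of the cited reference, is to pass to the Berkovich picture: a finite morphism of open annuli sends the skeleton (an open interval) onto the skeleton, and a finite covering of an interval by an interval is necessarily a homeomorphism. Hence $t\mapsto\log M(e^t)$ is monotone on $(\log r_1,\log r_2)$, which together with convexity and integrality of slopes forces a single slope $m$; your Newton-polygon identification $|m|=k$ and the endpoint matching then go through exactly as you wrote. If you want to avoid Berkovich language, you can instead argue directly that $M$ maps $(r_1,r_2)$ into $(s_1,s_2)$, that surjectivity of $\phi$ forces $M$ to be surjective, and that a convex PL surjection between open intervals with no interior extremum must be monotone---but you should supply that argument rather than invoke Proposition~\ref{prop:nonarchdiskpreimage}.
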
 

We will make key use, throughout this article, of a lemma describing the shape of the filled Julia set for $f$ at places of bad reduction. The lemma says that the description of $\mathcal{K}_v$ in terms of $|c|_v$ is uniform across all bad places not dividing $d$. The reader may wish to refer to Figure \ref{figure:1}, which portrays the shape of the filled Julia set for unicritical maps $f(z)\in\QQ[z]$ in both the archimedean and nonarchimedean contexts. Figure \ref{figure:a} shows an example at the place $v=\infty$, and Figure \ref{figure:b} shows a diagram of the analogous action on the Berkovich projective line. (In Figure \ref{figure:a}, the filled Julia set is contained within the green portion of the image.) The behavior at archimedean places mimics that at the nonarchimedean places, except that the annuli invoked in the proof of Lemma \ref{lem:shape} are not round. This idea is echoed in the work of DeMarco and Faber in \cite{DeMarcoFaber1} and \cite{DeMarcoFaber2}. 
\newline

\begin{figure}
\centering
\begin{subfigure}[t]{0.5\textwidth}
	\includegraphics[scale=0.5]{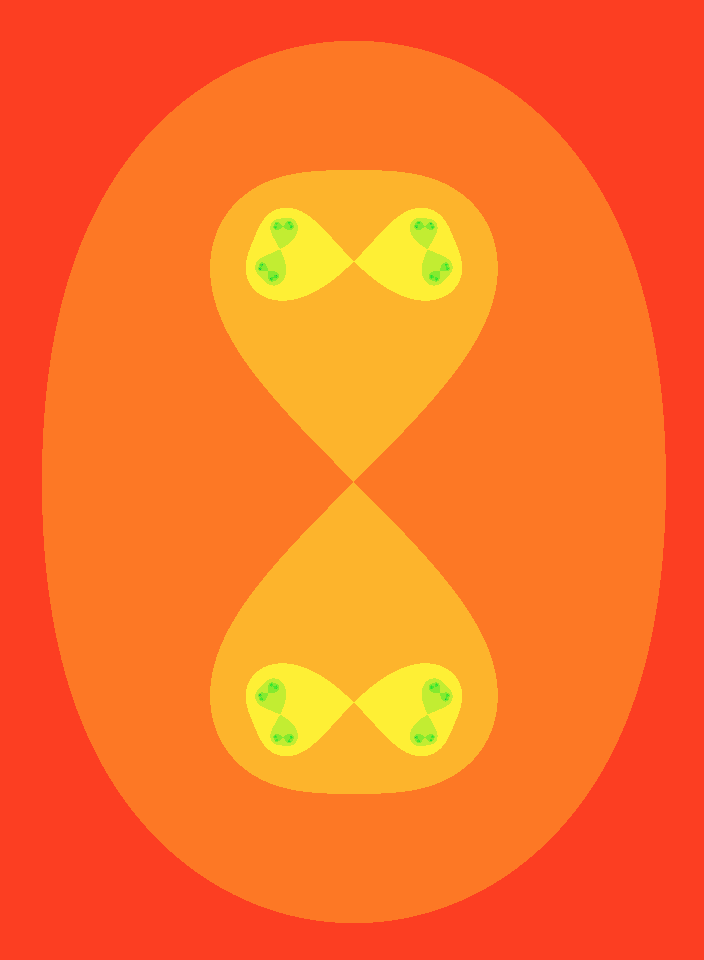}
	\caption{The equipotential curves of the map \\ $f(z)=z^2+2$, at $v=\infty$.}
\label{figure:a}
\end{subfigure}%
~
\begin{subfigure}[t]{0.5\textwidth}
		\labellist
	\small\hair 2pt
	\pinlabel $\zeta_{-\sqrt{c},|c|_v^{1/4}}$ at 240 155
	\pinlabel $\zeta_{\sqrt{c},|c|_v^{1/4}}$ at 460 180
	\pinlabel $\zeta_{0,|c|_v}$ at 342 415
	\pinlabel $0$ at 310 120
	\pinlabel $\zeta_{0,|c|_v^{1/2}}$ [bl] at 315 260
	\pinlabel $\infty$ at 312 580
	\endlabellist
	\centering
	
	\begin{tikzpicture}[->,>=stealth',auto,node distance=3cm,
	thick,main node/.style={circle,draw,font=\sffamily\Large\bfseries}]
	\node[anchor=south west,inner sep=0] at (0,-2) {\includegraphics[scale=0.45]{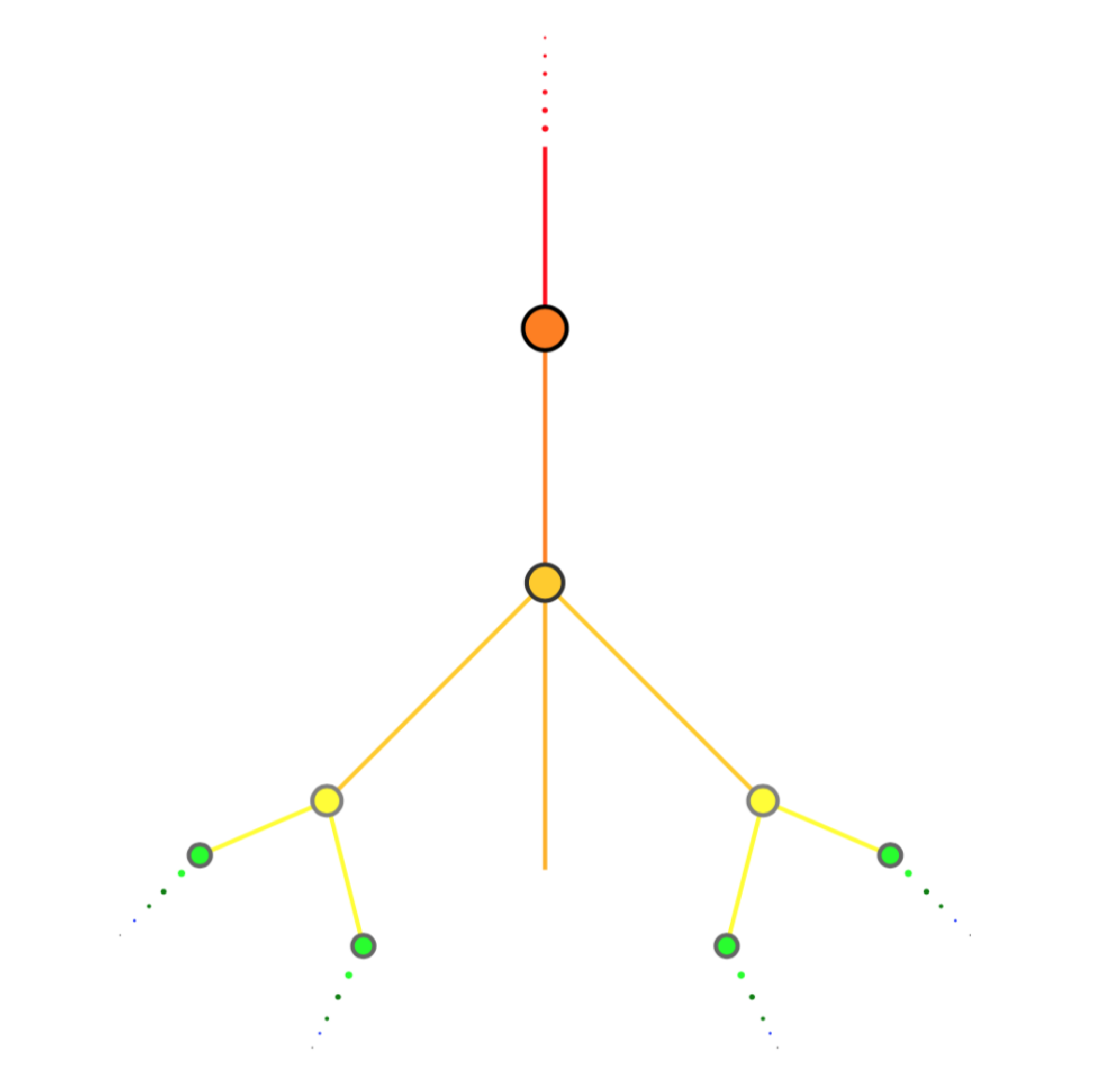}};
	\node (1) {};
	\node (A) at (2.5,.6) {};
	\node (B) at (4.3,2.4) {};
	\node (C) at (4.3, 2.4) {};
	\node (D) at (4.3, 4.5) {};
	\node (E) at (2.5,2.3) {$f$};
	\node (F) at (3.6,3.5) {$f$};
	
	\draw [->,black] (C) to [out=120,in=240] (D);
	\draw [->,black] (A) to [out=120,in=150] (B);
	\end{tikzpicture}
	
	\label{figure:BerkJulia}
	\caption{The action of a map $f(z)=z^2-c$ on $\textbf{A}_v^1$ for $v$ a place of bad reduction not dividing $2$. Here $\zeta_{a,r}$ is the Type II point corresponding to $D(a,r)$.}
	\label{figure:b}
	\end{subfigure}
\caption{
	\label{figure:1}%
	}
\end{figure}

\begin{lem}{\label{lem:shape}} Let $m\ge0$, and let $v\in M_K^0$ be such that $v\nmid d$, and $v$ is a place of bad reduction for $f$. Then \[f^{-m}(\mathcal{B}(0,|c|_v^{1/d}))=\bigcup_{i=1}^{d^m}\mathcal{B}_{m,i}\] is a union of $d^m$ closed disks in $\textbf{A}_v^1$ of radius $r_m$, where \[\log r_m=\left(\fr{1}{d}-m\fr{d-1}{d}\right)\log|c|_v.\] Moreover, for any $0\le k\le m-1$, there are $d^{m-k}-d^{m-(k+1)}=d^{m-1-k}(d-1)$ disks $\mathcal{B}_{m,j}$ lying at distance $|c|_v^{\fr{1}{d}-k\fr{d-1}{d}}$ from $\mathcal{B}_{m,i}$.\end{lem}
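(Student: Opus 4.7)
The plan is to induct on $m\ge 0$. The base case $m=0$ is immediate from the definition: there is a single disk of radius $|c|_v^{1/d}$, and the distance condition is vacuous (no admissible $k$). For the inductive step I use the preimage structure of polynomials on disks, as provided by Proposition~\ref{prop:nonarchdiskpreimage}. The main auxiliary invariant I will carry along is that for $m\ge 1$ every disk $\mathcal{B}_{m,i}$ has a Type~I representative $a_i\in\CC_v$ with $|a_i|_v=|c|_v^{1/d}$ (while for $m=0$ we have $a_0=0$). In either case, bad reduction ($|c|_v>1$) forces $|a_i-c|_v=|c|_v$, so any $d$-th root $\beta_i\in\CC_v$ of $a_i-c$ satisfies $|\beta_i|_v=|c|_v^{1/d}$.

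Because $v\nmid d$, we have $|d|_v=1$ and the polynomial $X^d-1$ is separable modulo $v$, giving $|\zeta^j-\zeta^\ell|_v=1$ for distinct $d$-th roots of unity $\zeta^j,\zeta^\ell$. Writing a point above $\mathcal{B}_{m,i}$ as $z=\zeta^j\beta_i+u$, I expand
\[
f(z)-a_i \;=\; d(\zeta^j\beta_i)^{d-1}u+\binom{d}{2}(\zeta^j\beta_i)^{d-2}u^2+\cdots+u^d.
\]
For $|u|_v<|c|_v^{1/d}$ the leading term strictly dominates (comparing the $k$-th term to the linear one reduces to $|u|_v<|c|_v^{1/d}$), so $|f(z)-a_i|_v=|c|_v^{(d-1)/d}|u|_v$, and the condition $|f(z)-a_i|_v\le r_m$ becomes $|u|_v\le r_m\cdot|c|_v^{-(d-1)/d}=r_{m+1}$, matching the stated formula. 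The resulting $d$ disks $\mathcal{B}(\zeta^j\beta_i,r_{m+1})$ for $j=0,\dots,d-1$ are pairwise disjoint since their centers lie at mutual distance $|\beta_i|_v=|c|_v^{1/d}>r_{m+1}$, and each center has norm $|c|_v^{1/d}$, so the auxiliary invariant propagates. This produces $d\cdot d^m=d^{m+1}$ disks of the correct radius.

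For the distance claim, fix $\mathcal{B}_{m+1,j}\subseteq f^{-1}(\mathcal{B}_{m,i})$ with representative $z=\zeta^j\beta_i$, and let $\mathcal{B}_{m+1,j'}\subseteq f^{-1}(\mathcal{B}_{m,i'})$ have representative $w=\zeta^{\ell}\beta_{i'}$. If $i'=i$, then $|z-w|_v=|c|_v^{1/d}$, giving $d-1$ such neighbours. If $i'\ne i$, set $R=|a_i-a_{i'}|_v$; by the inductive hypothesis $R=|c|_v^{1/d-k(d-1)/d}$ for some $0\le k\le m-1$, with $d^{m-1-k}(d-1)$ choices of $i'$ at this distance. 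Now
\[
a_i-a_{i'}\;=\;z^d-w^d\;=\;\prod_{t=0}^{d-1}(z-\zeta^{t}w);
\]
each factor has norm at most $|c|_v^{1/d}$, and since $R\le|c|_v^{1/d}<|c|_v=(|c|_v^{1/d})^d$, not every factor can attain this maximum. The points $\{\zeta^{t}w\}_{t=0}^{d-1}$ being pairwise at distance $|c|_v^{1/d}$, the ultrametric isosceles principle forces exactly one index $t^*$ to satisfy $|z-\zeta^{t^*}w|_v<|c|_v^{1/d}$, and then $|z-\zeta^{t^*}w|_v=R\cdot|c|_v^{-(d-1)/d}=|c|_v^{1/d-(k+1)(d-1)/d}$ while $|z-\zeta^{t}w|_v=|c|_v^{1/d}$ for the remaining $d-1$ indices. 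Summing over $i'$: for each $k'\in\{1,\dots,m\}$ the count at distance $|c|_v^{1/d-k'(d-1)/d}$ is $d^{m-k'}(d-1)$ (contributed only by $i'$ at distance $|c|_v^{1/d-(k'-1)(d-1)/d}$ from $\mathcal{B}_{m,i}$), while at distance $|c|_v^{1/d}$ the count is $(d-1)+(d-1)^2\sum_{k=0}^{m-1}d^{m-1-k}=(d-1)d^m$, exactly as asserted at level $m+1$.

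The only real obstacle is this combinatorial bookkeeping; the geometric content is just that the ultrametric isosceles principle uniquely identifies a ``close partner'' $\zeta^{t^*}w$, so each inter-disk distance $R$ at level $m$ gets rescaled by the universal factor $|c|_v^{-(d-1)/d}$ under one application of $f^{-1}$, producing exactly one new distance class together with the single maximal distance $|c|_v^{1/d}$ inherited from the fiber structure.
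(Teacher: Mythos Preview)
Your proof is correct and takes a route that differs in its technical machinery from the paper's. The paper deduces the radius formula by identifying the fundamental annulus $A_0=\{|c|_v^{1/d}<|z|_v<|c|_v\}$ and invoking Proposition~\ref{prop:nonarchmoduli} on moduli of annuli under covers to transport the modulus $\frac{d-1}{d}\log|c|_v$ down the tree of preimages; the distance claim is then handled by a brief inductive sketch (``continuing in this manner''). You instead linearize $f$ explicitly near each root $\zeta^j\beta_i$, reading off the scaling factor $|c|_v^{(d-1)/d}$ from the dominant linear term, and for the distances you use the factorization $z^d-w^d=\prod_t(z-\zeta^t w)$ together with the ultrametric isosceles principle to pin down the unique factor that drops below $|c|_v^{1/d}$ and compute its size. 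Your approach is more elementary---it bypasses Proposition~\ref{prop:nonarchmoduli} entirely---and your treatment of the distance count is considerably more explicit than the paper's. The paper's annulus-modulus viewpoint, on the other hand, ties the lemma more transparently to the splitting-potential and fundamental-annulus language used elsewhere in the article. One minor remark: like the paper, your computation is really carried out on the classical disks $D_{m,i}\subset\CC_v$ via their Type~I centers; the passage to the Berkovich disks $\mathcal{B}_{m,i}$ appearing in the statement is a formality handled, as in the paper's last sentence, by \cite[Lemma~9.12]{BakerRumely}.
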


\begin{proof} We first claim that when $v\nmid d$ is a place of bad reduction of $f$, the greatest splitting potential of $f$ is $\fr{1}{d}\lambda_v(c)$. Indeed, $\textup{Disc}(z^d-1)=\pm d^d$, so if $\zeta_1,\zeta_2$ are $d$-th roots of unity, $v\nmid d$ implies $|\zeta_1-\zeta_2|_v=1$. Thus if $P_1,P_2$ are distinct roots of $f$, then $|P_1-P_2|_v=|c|_v^{1/d}$. It follows from Proposition \ref{prop:nonarchdiskpreimage} that \[f^{-1}(D(0,|c|_v)^-)=\bigcup_{i=1}^dD(P_i,|c|_v^{1/d})^-,\] where $P_1,\dots,P_d$ are the $d$ distinct roots of $f$. Since $f^{-1}(D(0,|c|_v))=D(0,|c|_v^{1/d})$, this proves our claim. We thus see that the fundamental annulus of $f$ is given by \[A_0=\{|c|_v^{1/d}<|z|_v<|c|_v\}.\] Since each $D_i:=D(P_i,|c|_v^{1/d})^-$ maps over $D(0,|c|_v)^-$ with degree $1$, each $D(P_i,|c|_v^{1/d})^-$ contains $d$ open disks $E_{i,j}$, $j=1,\dots,d$ mapping one-to-one onto the $D(P_i,|c|_v^{1/d})^-$ respectively. If $A_{1,i}$ is the pre-image annulus of $A_0$ given by \[A_{1,i}=D(P_i,|c|_v^{1/d})^-\smallsetminus (f^{-1}(D(0,|c|_v^{1/d})\cap D(P_i,|c|_v^{1/d})^-),\] then \[\textup{mod}(A_{1,i})=\textup{mod}(A_0)=\fr{d-1}{d}\log|c|_v.\] Hence $r_1=|c|_v^{\fr{1}{d}-\fr{d-1}{d}}$. For $1\le j_1<j_2\le d$, the disk $E_{i,j_1}$ lies at distance $|c|_v^{\fr{1}{d}-\fr{d-1}{d}}$ from $E_{i,j_2}$, and for $1\le i_1<i_2\le d$, the disk $E_{i_1,j_1}$ lies at distance $|c|_v^{1/d}$ from $E_{i_2,j_2}$. Continuing in this manner, we see that \[r_m=|c|_v^{\fr{1}{d}-\fr{m(d-1)}{d}},\] and that for each disk component $D_{m,i}$ of $f^{-m}(D(0,|c|_v^{1/d}))$, there are $d^{m-k}-d^{m-1-k}$ disks $D_{m,j}$ lying at distance $|c|_v^{\fr{1}{d}-k\fr{d-1}{d}}$ from $D_{m,i}$, for $0\le k\le m-1$. Finally, as observed previously, \cite[Lemma 9.12]{BakerRumely} implies the corresponding statement for the Berkovich disks corresponding to $D(0,|c|_v^{1/d})$ and $D_{m,i}$.
\end{proof}

We now make definitions that will be used throughout the remainder of this section. Let $v\in M_K^0$ be a place of bad reduction such that $v\nmid d$. The following objects will depend on $v$, but as $v$ is understood, we largely suppress it from the notation. Write \[f^{-m}(\mathcal{B}(0,|c|_v^{1/d}))=\bigcup_{i=1}^{d^m}\mathcal{B}_{m,i}(v)=\bigcup_{i=1}^{d^m}\mathcal{B}_{m,i}\] as in Lemma \ref{lem:shape}. For all $m\ge1$, let $D_{m,1},\dots,D_{m,d^m}$ be the classical disks corresponding to $\mathcal{B}_{m,1},\dots,\mathcal{B}_{m,d^m}$ respectively. For each $m\ge2$, fix an ordering of the $\mathcal{B}_{m,i}$ inductively, so that, for a given $m\ge1$, $\mathcal{B}_{m+1,i}\subseteq\mathcal{B}_{m,\lceil\fr{i}{d}\rceil}$. For \begin{equation}\label{eqn:k}\vec{k}=(k_{2,1},\dots,k_{2,d^2})\in(\QQ\cap[0,1])^{d^2} \text{ with }\sum_{i=1}^{d^2}k_{2,i}=1,\end{equation} define recursively for each $m\ge3$ a $d^m$-tuple \begin{equation}{\label{eqn:ktuple}}\vec{k}(m)=(k_{m,1},\dots,k_{m,d^m})\in(\QQ\cap[0,1])^{d^m}\end{equation} such that $\sum_{i=1}^{d^m}k_{m,i}=1$ and $k_{m+1,j}=\fr{1}{d}k_{m,i}$ for all $(i-1)d+1\le j\le id$. For $\vec{k}=\vec{k}(2)$ and $m\ge2$, a vector \[\vec{j}(m)=(j_{m,1},\dots,j_{m,d^m})\in(\QQ\cap[0,1])^{d^m}\] is said to be a \emph{refinement} of $\vec{k}$ if \[\sum_{l=(i-1)d^{m-2}+1}^{id^{m-2}}j_{m,l}=k_{2,i}\] for all $1\le i\le d^2$. The refinement $\vec{k}(m)$ defined in (\ref{eqn:ktuple}) is thus the particular case $j_{m,l_1}=j_{m,l_2}$ for any given $i$ and $(i-1)d^{m-2}+1\le l_1,l_2\le id^{m-2}$. For $\vec{j}(m)=(j_{m,1},\dots,j_{m,d^m})$, let $\mu_{\vec{j}(m)}$ be the unique probability measure $\nu$ on $\mathcal{E}_m:=f^{-m}(\mathcal{B}(0,|c|_v^{1/d}))$ such that $\nu(\mathcal{B}_{m,i})=j_{m,i}$, and $\nu\mid_{\mathcal{B}_{m,i}}$ is a scalar multiple of the equilibrium measure on $\mathcal{B}_{m,i}$ for all $1\le i\le d^m$. Let \[\gamma_{\vec{j}(m)}(\mathcal{E}_m):=\textup{exp}(-I(\mu_{\vec{j}(m)})).\] Note that the equilibrium measure on $\mathcal{B}_{m,i}$ is the Dirac measure at the Type II point $\zeta_{m,i}$ in $\textbf{A}_v^1$ corresponding to $D_{m,i}$. Thus \begin{equation}\label{eqn:Emeqmeasure}\mu_{\vec{j}(m)}=\sum_{i=1}^{d^m}j_{m,i}\delta_{\zeta_{m,i}},\end{equation} where $\delta_{\zeta_{m,i}}$ is the Dirac measure at $\zeta_{m,i}$.

For $\vec{k}=(k_{2,1},\dots,k_{2,d^2})\in(\QQ\cap[0,1])^{d^2}$ with $\sum_{i=1}^{d^2} k_{2,i}=1$, and $\{\vec{k}(m)\}_{m=2}^\infty$ as in (\ref{eqn:ktuple}), define $\mathcal{I}(\mu_{\vec{k}})$ as \begin{equation}\label{eqn:I}\mathcal{I}(\mu_{\vec{k}})=\lim_{m\to\infty}I(\mu_{\vec{k}(m)}).\end{equation} (Proposition \ref{prop:weightedcap} will imply this limit exists.) Let \[\gamma(\mu_{\vec{k}})=\textup{exp}(-\mathcal{I}(\mu_{\vec{k}})).\] We say a set of $n$ elements in $\mathcal{K}_v$ is \emph{$\vec{k}$-distributed} if there are $k_in$ elements in each disk component $\mathcal{B}_{2,i}$. Similarly, for $m\ge2$ and $\vec{j}(m)$ a refinement of $\vec{k}$, we say $T\subseteq\mathcal{E}_m$ of cardinality $n$ is \emph{$\vec{j}(m)$-distributed} if there are $j_{m,i}n$ elements of $T$ in $\mathcal{B}_{m,i}$ for all $i$.

\begin{prop}\label{prop:weightedcap} Let $v\in M_K^0$ be a place of bad reduction for $f$ not dividing $d$. Let $\vec{k}=(k_{2,1},\dots,k_{2,d^2})\in(\QQ\cap[0,1])^{d^2}$, with $\sum_{i=1}^{d^2}k_{2,i}=1$, and let $\{\vec{k}(m)\}_{m=2}^\infty$ be as in (\ref{eqn:ktuple}). Then for each $m\ge2$, \[\log\gamma_{\vec{k}(m)}(\mathcal{E}_m)-\log\gamma_{\vec{k}(m+1)}(\mathcal{E}_{m+1})=\left(\fr{1}{d^{m+1}}-\fr{1}{d^{m+2}}\right)\lambda_v(c).\]\end{prop}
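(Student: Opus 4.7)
The plan is to directly compute both energies and take the difference, exploiting the Dirac decomposition \eqref{eqn:Emeqmeasure}. Writing $\mu_{\vec{k}(m)}=\sum_i k_{m,i}\delta_{\zeta_{m,i}}$, one has
\[I(\mu_{\vec{k}(m)})=-\sum_{i,j=1}^{d^m}k_{m,i}k_{m,j}\log\delta_v(\zeta_{m,i},\zeta_{m,j}),\]
a finite sum. From the standard description of the Hsia kernel at Type II points, $\delta_v(\zeta_{m,i},\zeta_{m,i})=r_m$, and for $i\neq j$, $\delta_v(\zeta_{m,i},\zeta_{m,j})$ is the common $v$-adic distance between the disjoint classical disks $D_{m,i}$ and $D_{m,j}$, which by Lemma \ref{lem:shape} equals $|c|_v^{1/d-k(d-1)/d}$, where $k$ is the largest index for which $\mathcal{B}_{m,i}$ and $\mathcal{B}_{m,j}$ share a common level-$k$ ancestor.

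The main reorganization is to split the level-$(m+1)$ double sum by level-$m$ ancestors. For indices $j\subset \mathcal{B}_{m,i}$ and $j'\subset \mathcal{B}_{m,i'}$ with $i\neq i'$, the disks $\mathcal{B}_{m+1,j}$ and $\mathcal{B}_{m+1,j'}$ lie at the same $v$-adic distance as their level-$m$ parents, so
\[\delta_v(\zeta_{m+1,j},\zeta_{m+1,j'})=\delta_v(\zeta_{m,i},\zeta_{m,i'}).\]
Since $\sum_{j\subset\mathcal{B}_{m,i}}k_{m+1,j}=k_{m,i}$ by the recursion, summing over all such pairs recovers exactly the off-diagonal contribution to $I(\mu_{\vec{k}(m)})$; these cross-ancestor terms therefore cancel in the difference.

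What remains is the within-ancestor contribution. Inside each $\mathcal{B}_{m,i}$, the $d$ sub-disks at level $m+1$ carry weight $k_{m,i}/d$ and pairwise distance $r_m$, while each has self-Hsia-kernel $r_{m+1}$. Their total contribution to $I(\mu_{\vec{k}(m+1)})$ is $(k_{m,i}^2/d)(-\log r_{m+1})+((d-1)k_{m,i}^2/d)(-\log r_m)$, to be compared with the level-$m$ diagonal $k_{m,i}^2(-\log r_m)$. Summing over $i$ and simplifying yields
\[I(\mu_{\vec{k}(m+1)})-I(\mu_{\vec{k}(m)})=\frac{\log r_m-\log r_{m+1}}{d}\sum_i k_{m,i}^2.\]
Lemma \ref{lem:shape} gives $\log r_m-\log r_{m+1}=\frac{d-1}{d}\lambda_v(c)$, and the recursive construction \eqref{eqn:ktuple} yields $\sum_i k_{m,i}^2=1/d^m$; together with $\log\gamma_{\vec{k}(m)}-\log\gamma_{\vec{k}(m+1)}=I(\mu_{\vec{k}(m+1)})-I(\mu_{\vec{k}(m)})$, this gives the claimed formula.

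The main obstacle will be the combinatorial bookkeeping in the cross-ancestor cancellation and the within-ancestor computation: carefully justifying that the ultrametric geometry preserves inter-parent distances upon refinement, and then marshalling the $1/d$, $(d-1)/d$, and $-1$ coefficients from the diagonal and off-diagonal pieces so they collapse to $(\log r_m-\log r_{m+1})/d$.
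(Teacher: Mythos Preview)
Your computation is correct through
\[
I(\mu_{\vec{k}(m+1)})-I(\mu_{\vec{k}(m)})=\frac{\log r_m-\log r_{m+1}}{d}\sum_{i=1}^{d^m} k_{m,i}^2,
\]
but the final claim that $\sum_i k_{m,i}^2=1/d^m$ is false for general $\vec{k}$. The recursion $k_{m+1,j}=k_{m,i}/d$ gives only $\sum_j k_{m+1,j}^2=\tfrac{1}{d}\sum_i k_{m,i}^2$, hence $\sum_i k_{m,i}^2=d^{\,2-m}\sum_i k_{2,i}^2$, which equals $1/d^m$ precisely when $\vec{k}(2)=(1/d^2,\dots,1/d^2)$. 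Concretely, for $d=2$, $m=2$, $\vec{k}(2)=(1,0,0,0)$ one has $\mu_{\vec{k}(2)}=\delta_{\zeta_{2,1}}$ and $\mu_{\vec{k}(3)}=\tfrac12\delta_{\zeta_{3,1}}+\tfrac12\delta_{\zeta_{3,2}}$, and a direct evaluation gives the difference $\tfrac14\lambda_v(c)$, not the asserted $\tfrac{1}{16}\lambda_v(c)$.

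The paper argues along a different line: it writes the difference as $\sum_i k_{m,i}\bigl[\log\gamma_v(\mathcal{B}_{m,i})-\log\gamma_v(\mathcal{W}_{m+1,i})\bigr]$, notes the bracket is independent of $i$, and invokes $\sum_i k_{m,i}=1$ to conclude the result is independent of $\vec{k}$ before evaluating it on the uniform vector. Your direct computation shows this cannot be right---the difference is genuinely quadratic in the weights---and the discrepancy traces to the passage from the potential $p_{\mu_{\vec{k}(m+1)}}(z_i)$ to the energy $I(\mu_{\vec{k}(m+1)})=\int p_{\mu_{\vec{k}(m+1)}}\,d\mu_{\vec{k}(m+1)}$, which introduces a second factor $k_{m,i}$ that the paper's displayed formulas omit. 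So your route is sound, but it establishes the stated identity only for the uniform $\vec{k}$; the downstream uses of the proposition (convergence of $I(\mu_{\vec{k}(m)})$ and the shape $\gamma(\mu_{\vec{k}})=|c|_v^{C(\vec{k})}$ with $C$ independent of $v$) are unaffected if one replaces the claimed constant by your $\vec{k}$-dependent one.
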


\begin{proof} We have $\mathcal{B}_{m+1,j}\subseteq\mathcal{B}_{m,i}$ for $(i-1)d+1\le j\le id$. Write \[\mathcal{W}_{m+1,i}=\bigcup_{j=(i-1)d+1}^{id}\mathcal{B}_{m+1,j}.\] For $i=1,\dots,d^m$, let $z_i$ be the Type II point in $\textbf{A}_v^1$ corresponding to a disk $D_{m+1,j}\subseteq D_{m,i}$ (so that for each $i$, there are $d$ possible choices of $z_i$). Then \[p_{\mu_{\vec{k}(m+1)}}(z_i)=k_{m,i}\log\gamma_v(\mathcal{W}_{m+1,i})-\log\left(\prod_{j\ne i}\delta_v(z_j,z_i)^{k_{m,j}}\right),\] independently of the choice of $z_i$. Therefore \[\log\gamma_{\vec{k}(m+1)}(\mathcal{E}_{m+1})=\sum_{i=1}^{d^m}\left[k_{m,i}\log\gamma_v(\mathcal{W}_{m+1,i})-\log\prod_{j\ne i}\delta_v(z_j,z_i)^{k_{m,j}}.\right]\] On the other hand, \[\log\gamma_{\vec{k}(m)}(\mathcal{E}_m)=\sum_{i=1}^{d^m}\left[k_{m,i}\log\gamma_v(\mathcal{B}_{m,i})-\log\prod_{j\ne i}\delta_v(z_j,z_i)^{k_{m,j}}\right].\] Hence \[\log\gamma_{\vec{k}(m)}(\mathcal{E}_m)-\log\gamma_{\vec{k}(m+1)}(\mathcal{E}_{m+1})=\sum_{i=1}^{d^m}k_{m,i}\left[\log\gamma_v(\mathcal{B}_{m,i})-\log\gamma_v(\mathcal{W}_{m+1,i})\right].\] As $\sum_{i=1}^{d^m}k_{m,i}=1$, this yields \[\log\gamma_{\vec{k}(m)}(\mathcal{E}_m)-\log\gamma_{\vec{k}(m+1)}(\mathcal{E}_{m+1})=R_m,\] where $R_m$ is a constant independent of $\vec{k}$, depending only on $m$ and $v$. Since we know from \cite[Theorem 1.2]{DeMarcoRumely} and \cite[Corollary 6.26]{BakerRumely} that \[R_m=\left(\fr{1}{d^{m+1}}-\fr{1}{d^{m+2}}\right)\log|c|_v\] when $\{\vec{k}(m)\}_{m=2}^\infty=\{(1/d^m,\dots,1/d^m)\}_{m=2}^\infty$, the proposition follows.
\end{proof}

We now show that given any sufficiently large $n$ and a set $T$ of $n$ points in $\mathcal{K}_v$, and $\vec{k}$ such that $T$ is $\vec{k}$-distributed, $d_v(T)$ cannot be much larger than $\gamma(\mu_{\vec{k}})$. The requisite ``sufficiently large'' $n$ is uniform with respect to both $\vec{k}$ and $v\in M_K^0\setminus\{v\mid d\}$. Lemma \ref{lem:finitecapacity} deals with the particular case $\vec{k}=(\underbrace{1/d^2,\dots,1/d^2\,}_\text{$d^2$ times})$, $\mu_{\vec{k}}=\mu_{\mathcal{K}_v}$, leaving only the uniformity in $v$. 

\begin{lem}{\label{lem:IdealizationLemma}} Let $v\in M_K^0$ be a place of bad reduction for $f$ not dividing $d$. Let $\vec{k}=(k_{2,1},\dots,k_{2,d^2})$ be as in (\ref{eqn:k}). Let $\epsilon>0$. There exists an $N=N(\epsilon)$, independent of $\vec{k}$ and $v\nmid d$, such that for any $\vec{k}$-distributed set $T\subseteq\mathcal{K}_v$ of order $n\ge N$, we have \[\prod_{\substack{z_i\in T\\z_i\ne w}}\delta_v(z_i,w)^{1/(n-1)}\le\gamma(\mu_{\vec{k}})|c|_v^\epsilon\] for any fixed $w\in T$. In particular, \[d_v(T)\le\gamma(\mu_{\vec{k}})|c|_v^\epsilon.\] 
\end{lem}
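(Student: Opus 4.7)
The plan is to pass from the statement about $T$ to a statement about the weighted capacity $\gamma_{\vec{j}(M)}(\mathcal{E}_M)$ at a finite but sufficiently deep level $M$, and then to invoke Proposition~\ref{prop:weightedcap} to compare that finite-level capacity with the limiting quantity $\gamma(\mu_{\vec{k}})$. The main tool for the passage is Lemma~\ref{lem:shape}, which gives an exact description of the pairwise distances $\delta_v(z,w)$ in terms of the tree structure of the disk components $\mathcal{B}_{m,i}$.

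\textbf{Step 1 (choosing $M$).} Given $\epsilon>0$, the telescoping identity from Proposition~\ref{prop:weightedcap} yields
\[
\mathcal{I}(\mu_{\vec{k}}) - I(\mu_{\vec{k}(M)}) = \sum_{m=M}^{\infty}\left(\fr{1}{d^{m+1}}-\fr{1}{d^{m+2}}\right)\lambda_v(c) = \fr{1}{d^{M+1}}\lambda_v(c).
\]
I take $M=M(\epsilon)$ large enough, depending only on $\epsilon$ and $d$, so that $1/d^{M+1}<\epsilon/3$; this choice is uniform in both $v\nmid d$ and $\vec{k}$, as required.

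\textbf{Step 2 (rewriting via a potential).} For the given $T$ and a fixed $w\in T$, let $i_0$ be the unique index with $w\in\mathcal{B}_{M,i_0}$, let $n_{M,i}=|T\cap\mathcal{B}_{M,i}|$, and set $j_{M,i}=n_{M,i}/n$. Since $T$ is $\vec{k}$-distributed, $\vec{j}(M)$ is a refinement of $\vec{k}$. By Lemma~\ref{lem:shape}, $\delta_v(z,w)=\delta_v(\zeta_{M,i},\zeta_{M,i_0})$ whenever $z\in\mathcal{B}_{M,i}$ with $i\neq i_0$, and $\delta_v(z,w)\le r_M=\delta_v(\zeta_{M,i_0},\zeta_{M,i_0})$ when $z\in\mathcal{B}_{M,i_0}$, $z\neq w$. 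Summing and using (\ref{eqn:Emeqmeasure}),
\[
\sum_{z\ne w}\log\delta_v(z,w) \;\le\; -n\,p_{\mu_{\vec{j}(M)}}(\zeta_{M,i_0}) - \log r_M.
\]
Since $-\log r_M$ is a fixed multiple (depending only on $M,d$) of $\log|c|_v$, the contribution of $-\log r_M/(n-1)$ is at most $(\epsilon/3)\log|c|_v$ once $n\ge N_1=N_1(\epsilon,M,d)$, which is uniform in $v$ and $\vec{k}$.

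\textbf{Step 3 (lower bound on the potential).} What remains is to prove
\[
p_{\mu_{\vec{j}(M)}}(\zeta_{M,i_0}) \;\ge\; I(\mu_{\vec{k}(M)}) - \tfrac{\epsilon}{3}\log|c|_v,
\]
which combined with Step~1 gives $p_{\mu_{\vec{j}(M)}}(\zeta_{M,i_0})\ge \mathcal{I}(\mu_{\vec{k}})-\tfrac{2\epsilon}{3}\log|c|_v$ and finishes the proof. The symmetries provided by Lemma~\ref{lem:shape} are crucial here: distances between points in distinct $\mathcal{B}_{2,\ell}$'s depend only on $\ell,\ell_0$, so the ``cross-$\mathcal{B}_2$'' contribution to $p_{\mu_{\vec{j}(M)}}(\zeta_{M,i_0})$ is exactly $-\sum_{\ell\ne\ell_0}k_{2,\ell}\log\delta_v(\zeta_{2,\ell_0},\zeta_{2,\ell})$, independent of the sub-refinement; the within-$\mathcal{B}_{2,\ell_0}$ contribution is then estimated by rebalancing $\vec{j}(M)|_{\mathcal{B}_{2,\ell_0}}$ toward the canonical $\vec{k}(M)|_{\mathcal{B}_{2,\ell_0}}$, using the same tree-symmetric distance pattern from Lemma~\ref{lem:shape}.

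\textbf{Main obstacle.} The crux is Step~3. The difficulty is that the measure $\mu_{\vec{j}(M)}$ is dictated by $T$, not by us, so we cannot take $\vec{j}(M)=\vec{k}(M)$; we must control $p_{\mu_{\vec{j}(M)}}(\zeta_{M,i_0})$ uniformly over the freedom in the sub-distribution of $T$ within each $\mathcal{B}_{2,\ell}$. The rearrangement argument must be quantitative enough that the loss incurred by replacing $\vec{j}(M)$ with $\vec{k}(M)$ is absorbed into the $\epsilon\log|c|_v$ error, independently of $\vec{k}$ and of the place $v\nmid d$. Once this is established for a single fixed $w$, the ``in particular'' statement $d_v(T)\le\gamma(\mu_{\vec{k}})|c|_v^\epsilon$ follows immediately by taking the geometric mean over $w$.
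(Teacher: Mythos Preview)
Your Steps~1 and~2 match the paper's argument essentially verbatim: the paper also fixes a level $m$, derives the inequality
\[
\exp\bigl(-p_{\mu_{\vec{j}(m)}}(w)\bigr)\;\ge\;\Bigl(\prod_{z_i\ne w}\delta_v(z_i,w)^{1/(n-1)}\Bigr)\,r_m^{1/(n-1)},
\]
and absorbs the factor $r_m^{1/(n-1)}$ into $|c|_v^{\epsilon'}$ for $n$ large. The divergence is exactly at your Step~3. The paper does \emph{not} attempt to bound the potential $p_{\mu_{\vec{j}(m)}}(\zeta_{m,i_0})$ from below by $I(\mu_{\vec{k}(m)})$ pointwise. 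Instead it invokes the one--line observation (\ref{ineq:jvsk}): by the symmetry of the tree within each $\mathcal{B}_{2,i}$ and by uniqueness of equilibrium measures, the canonical refinement $\vec{k}(m)$ minimizes energy among all refinements $\vec{j}(m)$ of $\vec{k}$, so $\gamma_{\vec{j}(m)}(\mathcal{E}_m)\le\gamma_{\vec{k}(m)}(\mathcal{E}_m)$. Combined with Proposition~\ref{prop:weightedcap}, this gives the ``in particular'' conclusion $d_v(T)\le\gamma(\mu_{\vec{k}})|c|_v^{\epsilon}$ immediately, once one first averages the displayed inequality above over $w\in T$ (this averaging converts $\exp(-p_{\mu_{\vec{j}(m)}}(w))$ into $\gamma_{\vec{j}(m)}(\mathcal{E}_m)$ on the nose, since $\tfrac{1}{n}\sum_{w\in T}p_{\mu_{\vec{j}(m)}}(w)=I(\mu_{\vec{j}(m)})$).

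Your Step~3, as stated, cannot be completed: the pointwise potential bound you are after is simply false. Take $d=2$, $\vec{k}=(0,0,0,1)$, and let $T\subset\mathcal{K}_v\cap\mathcal{B}_{2,4}$ consist of one point $w\in\mathcal{B}_{3,7}$ and $n-1$ points in $\mathcal{B}_{3,8}$. Then every $z\ne w$ lies at distance exactly $r_2=|c|_v^{-1/2}$ from $w$, so $\prod_{z\ne w}\delta_v(z,w)^{1/(n-1)}=|c|_v^{-1/2}$ for every $n$. On the other hand, a direct telescoping with Proposition~\ref{prop:weightedcap} gives $\gamma(\mu_{\vec{k}})=|c|_v^{-5/8}$, so the inequality $\prod^{1/(n-1)}\le\gamma(\mu_{\vec{k}})|c|_v^{\epsilon}$ fails for all $\epsilon<1/8$, regardless of $N$. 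Equivalently, at any level $M\ge 3$ one computes $p_{\mu_{\vec{j}(M)}}(\zeta_{M,i_0})\approx\tfrac{1}{2}\log|c|_v$ while $I(\mu_{\vec{k}(M)})\to\tfrac{5}{8}\log|c|_v$; no rebalancing can close a gap of size $\tfrac{1}{8}\log|c|_v$. (The paper's own passage from $\exp(-p_{\mu_{\vec{j}(m)}}(w))$ to $\gamma_{\vec{j}(m)}(\mathcal{E}_m)$ for a \emph{fixed} $w$ is equally unjustified for the same reason; only after averaging over $w$ does the argument go through, and only the averaged bound $d_v(T)\le\gamma(\mu_{\vec{k}})|c|_v^{\epsilon}$ is used in the sequel.) So: keep your Steps~1--2, drop the pointwise Step~3, average over $w\in T$, and then replace your rebalancing argument with the energy inequality $I(\mu_{\vec{j}(m)})\ge I(\mu_{\vec{k}(m)})$ coming from symmetry.
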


\begin{proof} Let $\epsilon'>0$, let $m\ge2$, and fix $\vec{k}=\vec{k}(2)$. Let $\vec{j}(m)=(j_{m,1},\dots,j_{m,d^m})$ be a refinement of $\vec{k}(2)$. We claim that there is an $N=N(\epsilon',m)$ such that if $T\subseteq\mathcal{E}_m$ is a set of $n\ge N$ elements $z_1,\dots,z_n\in\mathcal{E}_m$, and $T$ is $\vec{j}(m)$-distributed, then \begin{equation}{\label{ineq:levelmconv}}d_v(T)\le|c|_v^{\epsilon'}\gamma_{\vec{j}(m)}(\mathcal{E}_m).\end{equation}
	
Indeed, suppose $T\subseteq\mathcal{E}_m$ is $\vec{j}(m)$-distributed, and let $r_m$ be as in Lemma \ref{lem:shape}. By the description of $\mu_{\vec{j}(m)}$ given in (\ref{eqn:Emeqmeasure}), we have that for any $w\in T\cap\mathcal{B}_{m,l}$, \begin{equation*}\begin{split}\textup{exp}(-p_{\mu_{\vec{j}(m)}}(w))&\ge\left(\prod_{\substack{z_i\in T\\z_i\ne w}}\delta_v(z_i,w)^{1/(n-1)}\right)\left(\tfrac{1}{r_m}\right)^{\fr{(j_{m,l})n-1}{n-1}}(r_m)^{j_{m,l}} \\&=\left(\prod_{\substack{z_i\in T\\z_i\ne w}}\delta_v(z_i,w)^{1/(n-1)}\right)(r_m)^{\fr{1-j_{m,l}}{n-1}}\\&\ge\left(\prod_{\substack{z_i\in T\\z_i\ne w}}\delta_v(z_i,w)^{1/(n-1)}\right)(r_m)^{\fr{1}{n-1}},\end{split}\end{equation*} where the final inequality follows from the fact that $r_m<1$. It follows from the description of $r_m$ in Lemma \ref{lem:shape} that given an $\epsilon'>0$, there is an $N=N(\epsilon',m)$ such that if $|T|\ge N$, then \[|c|_v^{-\epsilon'}\prod_{\substack{z_i\in T\\z_i\ne w}}\delta_v(z_i,w)^{1/(n-1)}\le\gamma_{\vec{j}(m)}(\mathcal{E}_m),\] for an $\epsilon'>0$ independent of $v$ and $\vec{j}(m)$ (as well as the $\vec{k}$ of which $\vec{j}(m)$ is a refinement). This proves the claim. 
	
Let $\vec{k}(m)$ be as in (\ref{eqn:ktuple}). Then by symmetry and the uniqueness of the equilibrium measure on each $\mathcal{B}_{2,i}\cap\mathcal{E}_m$, we have \begin{equation}\label{ineq:jvsk}\gamma_{\vec{j}(m)}(\mathcal{E}_m)\le\gamma_{\vec{k}(m)}(\mathcal{E}_m)\end{equation} for any refinement $\vec{j}(m)$ of $\vec{k}$. The lemma then follows from Proposition \ref{prop:weightedcap} combined with (\ref{ineq:levelmconv}) and (\ref{ineq:jvsk}).\end{proof}

An important special case of Lemma \ref{lem:IdealizationLemma} is the case $\vec{k}(2)=(1/d^2,\dots,1/d^2)$. We record it here for future reference.

\begin{lem}{\label{lem:finitecapacity}} Let $v\in M_K^0$ be a place of bad reduction for $f$ not dividing $d$. There is an integer $N=N(\epsilon)$ such that any set $T$ of $n\ge N$ points in $\mathcal{K}_v$ has \[\log\prod_{\substack{z_i\in T\\z_i\ne w}}\delta_v(z_i,w)^{1/(n-1)}\le\epsilon\lambda_v(c)\] for any $w\in T$, and thus, \[\log d_v(T)\le\epsilon\lambda_v(c).\]\end{lem}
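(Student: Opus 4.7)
My plan is to deduce this from Lemma \ref{lem:IdealizationLemma} by choosing, for each candidate set $T$, the vector $\vec{k}$ that records how $T$ actually distributes across the disks $\mathcal{B}_{2,i}$. The critical feature being exploited is that the threshold $N(\epsilon)$ in Lemma \ref{lem:IdealizationLemma} is independent of $\vec{k}$, so a single $N$ works as $T$ ranges over all subsets of $\mathcal{K}_v$ of size $\ge N$.

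Concretely, given $T\subseteq\mathcal{K}_v$ with $|T|=n$, I define $\vec{k}=(k_{2,1},\ldots,k_{2,d^2})$ by $k_{2,i}:=|T\cap\mathcal{B}_{2,i}|/n$. Since $\mathcal{K}_v$ lies in the disjoint union of the $\mathcal{B}_{2,i}$, these are nonnegative rationals summing to $1$, and $T$ is $\vec{k}$-distributed by construction. Lemma \ref{lem:IdealizationLemma} then supplies, for each $w\in T$ and every $n\ge N(\epsilon)$,
\[
\prod_{\substack{z_i\in T\\ z_i\ne w}}\delta_v(z_i,w)^{1/(n-1)}\;\le\;\gamma(\mu_{\vec{k}})\,|c|_v^{\epsilon}.
\]
The first conclusion of the lemma follows upon taking logarithms, provided one knows $\gamma(\mu_{\vec{k}})\le 1$; the bound on $d_v(T)$ is then obtained by averaging this inequality over the $n$ choices of $w\in T$.

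It remains to establish $\gamma(\mu_{\vec{k}})\le 1$. The cleanest route is to invoke the extremal property of the equilibrium measure on $\mathcal{K}_v$ together with the classical fact that the Berkovich filled Julia set of a monic polynomial has capacity $\gamma_v(\mathcal{K}_v)=1$ (see, e.g., \cite{BakerRumely} and \cite{DeMarcoRumely}). Since the $\mu_{\vec{k}(m)}$ are probability measures supported on the nested sets $\mathcal{E}_m$ shrinking to $\mathcal{K}_v$ and $\mathcal{I}(\mu_{\vec{k}})$ is their limiting energy, one has $\mathcal{I}(\mu_{\vec{k}})\ge\inf_\nu I(\nu)$ where $\nu$ ranges over probability measures on $\mathcal{K}_v$, yielding $\gamma(\mu_{\vec{k}})\le\gamma_v(\mathcal{K}_v)=1$. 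The main technical subtlety is this last inequality, because $\mathcal{I}(\mu_{\vec{k}})$ is defined as a limit of energies rather than directly as an energy integral of a single limiting measure; if preferred, one can bypass it entirely by telescoping the identity of Proposition \ref{prop:weightedcap} from $m=2$ to infinity and comparing with the uniform case via the symmetry inequality (\ref{ineq:jvsk}) from the proof of Lemma \ref{lem:IdealizationLemma}, where the capacity is known explicitly.
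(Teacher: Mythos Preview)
Your proof is correct and follows essentially the same path as the paper's: for a given $T$, apply Lemma~\ref{lem:IdealizationLemma} to the $\vec{k}$ recording its actual distribution, then use $\gamma(\mu_{(1/d^2,\ldots,1/d^2)})=1$ (from \cite{DeMarcoRumely}, \cite{BakerRumely}) together with Proposition~\ref{prop:weightedcap} to conclude $\gamma(\mu_{\vec{k}'})\le 1$ for every $\vec{k}'$. Your route (b) is precisely the paper's argument; the only imprecision is that the comparison at level $2$ comes from the uniqueness of the equilibrium measure on $\mathcal{E}_2$ (the symmetry principle underlying (\ref{ineq:jvsk})) rather than from (\ref{ineq:jvsk}) as literally stated, which compares refinements of a fixed $\vec{k}(2)$.
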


\begin{proof} The statement is clear when $v\in M_K^0$ is a place of good reduction. Let $v\nmid d$ be a place of bad reduction, and let $\vec{k}=(1/d^2,\dots,1/d^2)$ be as in (\ref{eqn:k}). By \cite[Theorem 1.2]{DeMarcoRumely} and \cite[Corollary 6.26]{BakerRumely}, we have $\gamma(\mu_{\vec{k}})=1$. Moreover, Proposition \ref{prop:weightedcap} implies that $\gamma(\mu_{\vec{k}'})\le\gamma(\mu_{\vec{k}})$ for any other $\vec{k}'=(k_{2,1},\dots,k_{2,d^2})$ as in (\ref{eqn:k}). Lemma \ref{lem:IdealizationLemma} thus completes the proof.\end{proof}

\begin{prop}{\label{prop:equidistribute}} Let $\epsilon_1>0$, and let $v\in M_K^0$ be a place of bad reduction for $f$ such that $v\nmid d$. Write \[\mathcal{E}_2=f^{-2}(\mathcal{B}(0,|c|_v^{1/d}))=\bigcup_{i=1}^{d^2}\mathcal{B}_{2,i},\] where each $\mathcal{B}_{2,i}$ is a closed disk in $\textbf{A}_v^1$ about a root of $f^2$. Let $T\subseteq\mathcal{K}_v$, with $|T|=n$. For each $1\le i\le d^2$, let $b_i=|T\cap\mathcal{B}_{2,i}|$. There exist an $\epsilon_2=\epsilon_2(\epsilon_1)>0$ and an $N=N(\epsilon_1)$ such that if $n\ge N$ and some $b_i$ fails to satisfy \begin{equation*}\fr{1-\epsilon_1}{d^2}n< b_i< \fr{1+\epsilon_1}{d^2}n,\end{equation*} then $d_v(T)\le|c|_v^{-\epsilon_2}$.\end{prop}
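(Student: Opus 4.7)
The plan is to apply Lemma \ref{lem:IdealizationLemma} with respect to $\vec{k}(T) := (b_1/n, \ldots, b_{d^2}/n)$, relative to which $T$ is tautologically $\vec{k}(T)$-distributed. For any $\epsilon' > 0$ and $n \ge N(\epsilon')$, the lemma gives
\[ d_v(T) \le \gamma(\mu_{\vec{k}(T)}) \cdot |c|_v^{\epsilon'}. \]
The proposition then reduces to showing that when $\vec{k}(T)$ lies in the closed subset
\[ F_{\epsilon_1} := \{\vec{k} \in \Delta : |k_{2,i} - 1/d^2| \ge \epsilon_1/d^2 \text{ for some } i\} \]
of the probability simplex $\Delta \subset \mathbb{R}^{d^2}$, one has $\gamma(\mu_{\vec{k}(T)}) \le |c|_v^{-c(\epsilon_1)}$ for some $c(\epsilon_1) > 0$ depending only on $d$ and $\epsilon_1$ (not on $v$).

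The first step is to observe that $\mathcal{I}(\mu_{\vec{k}})$ scales linearly in $\lambda_v(c)$ with proportionality depending only on $\vec{k}$ and $d$. Telescoping Proposition \ref{prop:weightedcap} over $m \ge 2$ and summing $\sum_{m \ge 2}(d^{-m-1} - d^{-m-2}) = d^{-3}$ gives
\[ \mathcal{I}(\mu_{\vec{k}}) = I(\mu_{\vec{k}(2)}) + \frac{\lambda_v(c)}{d^3}. \]
Expanding $I(\mu_{\vec{k}(2)}) = \sum_{i,j} k_{2,i} k_{2,j} \bigl(-\log \delta_v(\zeta_{2,i}, \zeta_{2,j})\bigr)$ and invoking Lemma \ref{lem:shape}, each $-\log \delta_v(\zeta_{2,i}, \zeta_{2,j})$ is a rational multiple of $\lambda_v(c)$ whose coefficient depends only on $d$ and on which of three structural cases holds: $i = j$; $i \ne j$ but $\mathcal{B}_{2,i}$ and $\mathcal{B}_{2,j}$ share the parent disk $\mathcal{B}_{1, \lceil i/d \rceil}$; or the disks sit in distinct parents. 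Consequently $\mathcal{I}(\mu_{\vec{k}}) = \lambda_v(c)\,\Phi(\vec{k})$ for a quadratic polynomial $\Phi$ depending only on $d$.

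The crux is then to show $\Phi > 0$ on $\Delta \setminus \{\vec{k}_{unif}\}$, where $\vec{k}_{unif} = (1/d^2, \ldots, 1/d^2)$. Writing $s_1 = \sum_i k_{2,i}^2$, $s_2$ for the ``same parent, $i \ne j$'' contribution, and $K_a = \sum_{i \in P_a} k_{2,i}$ for the total mass in the $a$-th parent cluster, one has $s_1 + s_2 = \sum_a K_a^2$ and a direct calculation collapses the above to
\[ \Phi(\vec{k}) = \frac{d-1}{d}\left[s_1 + \sum_{a=1}^d K_a^2 - \frac{d+1}{d^2}\right]. \]
Two applications of Cauchy--Schwarz --- first $\sum_a K_a^2 \ge (\sum_a K_a)^2/d = 1/d$, then $s_1 = \sum_a \sum_{j \in P_a} k_{2,j}^2 \ge \sum_a K_a^2/d$ --- give $s_1 + \sum_a K_a^2 \ge (d+1)/d^2$ with equality iff $\vec{k} = \vec{k}_{unif}$. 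Hence $\Phi$ is continuous on the compact set $\Delta$ and strictly positive on $F_{\epsilon_1}$, so $c(\epsilon_1) := \min_{F_{\epsilon_1}} \Phi > 0$ depends only on $d$ and $\epsilon_1$. Taking $\epsilon' = c(\epsilon_1)/2$ in Lemma \ref{lem:IdealizationLemma} then yields the bound with $\epsilon_2 = c(\epsilon_1)/2$ and $N = N(c(\epsilon_1)/2)$.

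The main obstacle is the explicit identification of $\Phi$ and the verification that it is uniquely minimized on $\Delta$ at $\vec{k}_{unif}$: this requires careful bookkeeping of the three distance types from Lemma \ref{lem:shape} together with the Cauchy--Schwarz argument above. A slicker route would invoke uniqueness of the equilibrium measure (\cite[Proposition 7.21]{BakerRumely}) applied to the limit measures $\mu_{\vec{k}}$, but doing so rigorously requires extra care with the weak-limit issues latent in the definition of $\mathcal{I}(\mu_{\vec{k}})$, whereas the elementary quadratic calculation bypasses these altogether.
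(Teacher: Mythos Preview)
Your proof is correct and takes a genuinely different route from the paper's. Both proofs invoke Lemma~\ref{lem:IdealizationLemma} and a compactness argument on the simplex of weight vectors, but they diverge on the key step of showing that $\mathcal{I}(\mu_{\vec{k}})>0$ (equivalently $\gamma(\mu_{\vec{k}})<1$) whenever $\vec{k}\ne\vec{k}_{\mathrm{unif}}$, with a lower bound uniform over $F_{\epsilon_1}$ and independent of $v$. The paper proceeds abstractly: it passes to a weak-$*$ limit $\mu_{\vec{k}}^*$ of the measures $\mu_{\vec{k}(m)}$, proves the claim $\mathcal{I}(\mu_{\vec{k}})=I(\mu_{\vec{k}}^*)$ via a lower-semicontinuity argument on $\mathcal{E}_2\times\mathcal{E}_2$, and then appeals to the uniqueness of the equilibrium measure on $\mathcal{K}_v$ together with Proposition~\ref{prop:weightedcap} to deduce $\gamma(\mu_{\vec{k}_0})=|c|_v^{-\epsilon'}$ for some $\epsilon'=\epsilon'(\epsilon_1)$. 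You instead compute $\mathcal{I}(\mu_{\vec{k}})$ explicitly: telescoping Proposition~\ref{prop:weightedcap} and reading off the three pairwise distances from Lemma~\ref{lem:shape} gives the closed form $\mathcal{I}(\mu_{\vec{k}})=\lambda_v(c)\cdot\tfrac{d-1}{d}\bigl[s_1+\sum_a K_a^2-\tfrac{d+1}{d^2}\bigr]$, and two Cauchy--Schwarz inequalities pin down the unique minimizer. Your approach is more elementary and makes the independence from $v$ completely transparent (since $\Phi$ visibly depends only on $d$ and $\vec{k}$), at the cost of being specific to the unicritical tree structure; the paper's argument is more conceptual and would transfer to other families where the distances are not so explicit. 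It is amusing that your final paragraph anticipates exactly the route the paper takes, and correctly identifies the weak-limit bookkeeping as the price of that ``slicker'' argument.
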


\begin{proof} Without loss of generality, suppose $\epsilon_1\in\QQ$. Suppose first that there is some $1\le i\le d^2$ such that $|\mathcal{B}_{2,i}\cap T|<\fr{1-\epsilon_1}{d^2}n$. Let $\mathcal{M}$ be the set of $d^2$-tuples $\vec{k}(2)$ as in (\ref{eqn:ktuple}) such that either $k_{2,i}\le\fr{1-\epsilon_1}{d^2}$ for some $1\le i\le d^2$, or $k_{2,i}\ge\fr{1+\epsilon_1}{d^2}$ for some $1\le i\le d^2$. Then $\mathcal{M}$ is compact. The function $\mathcal{I}(\mu_{\vec{k}}):\mathcal{M}\to\mathbb{R}$ defined in (\ref{eqn:I}) is clearly continuous. It therefore attains a minimum on the compact set $\mathcal{M}$. Suppose this occurs at $\vec{k}_0$. 
	
Viewed as Borel measures on $\mathcal{E}_2$, the $\mu_{\vec{k}(m)}$ must have a subsequence $\mu_{\vec{k}(m_l)}$ that is weak$^*$-convergent on $\mathcal{E}_2$, by \cite[Theorem A.10]{BakerRumely}. Denote the limit by $\mu_{\vec{k}}^*$. It is clear that $\text{supp}(\mu_{\vec{k}}^*)\subseteq\mathcal{K}_v$. We want to show: 
	
	\textbf{Claim}: For each $\vec{k}=\vec{k}(2)$, \[\mathcal{I}(\mu_{\vec{k}})=I(\mu_{\vec{k}}^*).\] Assuming this claim, it follows from Proposition \ref{prop:weightedcap} that \begin{equation}\label{eqn:bdabove}\textup{exp}(-I(\mu_{\vec{k}_0}^*))=\gamma(\mu_{\vec{k}_0})=|c|_v^{-\epsilon'}\end{equation} for some $\epsilon'=\epsilon'(\epsilon_1)$. By \cite[Theorem 10.91(D)]{BakerRumely}, we have $\gamma_v(\mathcal{K}_v)=1$; thus, \[\textup{exp}(-I(\mu_{\vec{k}}^*))\le\textup{exp}(-I(\mu_{\vec{k}_0}^*))<\gamma_v(\mathcal{K}_v)=1,\] where the second inequality follows from the uniqueness of the equilibrium measure on $\mathcal{K}_v$. The proof of the claim is then exactly parallel to that of \cite[Corollary 6.9]{BakerRumely}. 
	
	Proof of claim: It suffices to show that \[\mathcal{I}(\mu_{\vec{k}})\ge I(\mu_{\vec{k}}^*).\] As it is easily seen that \[I(\mu_{\vec{k}(m)})\le I(\mu_{\vec{k}}^*)\] for all $m\ge2$, the desired claim will then follow immediately.
	
	Let $\mathcal{C}(\mathcal{E}_2\times\mathcal{E}_2)$ denote the space of continuous real-valued functions on $\mathcal{E}_2\times\mathcal{E}_2$. Since $\mathcal{E}_2$ is a compact subset of $\mathbf{A}_v^1$, we know that $-\log\delta_v(x,y)$ is bounded below on $\mathcal{E}_2\times\mathcal{E}_2$ by some constant $-M$. By \cite[Proposition A.3]{BakerRumely}, for any probability measure $\mu$ on $\mathcal{E}_2$, we thus have \[I(\mu)=\sup_{\substack{g\in\mathcal{C}(\mathcal{E}_2\times\mathcal{E}_2)\\-M\le g\le-\log\delta_v(x,y)}}\iint_{\mathcal{E}_2\times\mathcal{E}_2}g(x,y)d\mu(x)d\mu(y).\] By \cite[Lemma 6.5]{BakerRumely}, for each $g\in\mathcal{C}(\mathcal{E}_2\times\mathcal{E}_2)$, \[\iint_{\mathcal{E}_2\times\mathcal{E}_2}g(x,y)d\mu_{\vec{k}}^*(x)d\mu_{\vec{k}}^*(y)=\lim_{l\to\infty}\iint_{\mathcal{E}_2\times\mathcal{E}_2}g(x,y)d\mu_{\vec{k}(m_l)}(x)d\mu_{\vec{k}(m_l)}(y).\]	When $g(x,y)\le-\log\delta_v(x,y)$ on $\mathcal{E}_2\times\mathcal{E}_2$, for each $m_l$ we have \[\iint_{\mathcal{E}_2\times\mathcal{E}_2}g(x,y)d\mu_{\vec{k}(m_l)}(x)d\mu_{\vec{k}(m_l)}(y)\le I(\mu_{\vec{k}(m_l)}).\] It follows that for any such $g$, \begin{equation*}\begin{split}\lim_{l\to\infty}I(\mu_{\vec{k}(m_l)}(\mathcal{E}_{m_l}))&\ge\lim_{l\to\infty}\iint_{\mathcal{E}_2\times\mathcal{E}_2}g(x,y)d\mu_{\vec{k}(m_l)}(x)d\mu_{\vec{k}(m_l)}(y)\\&=\iint_{\mathcal{K}_v\times\mathcal{K}_v}g(x,y)d\mu_{\vec{k}}^*(x)d\mu_{\vec{k}}^*(y).\end{split}\end{equation*} Taking the supremum over all such $g\in\mathcal{C}(\mathcal{E}_2\times\mathcal{E}_2)$ yields \[\lim_{l\to\infty}I(\mu_{\vec{k}(m_l)}(\mathcal{E}_{m_l}))\ge\sup_g\iint_{\mathcal{K}_v\times\mathcal{K}_v}g(x,y)d\mu_{\vec{k}}^*(x)d\mu_{\vec{k}}^*(y)=I(\mu_{\vec{k}}^*)\] as claimed.
	
	From Lemma \ref{lem:IdealizationLemma} and the above claim, we know that there is an $N=N(\epsilon')$ such that if $|T|=n\ge N$ is $\vec{k}$-distributed, then \begin{equation}\label{eqn:smalldiam} d_v(T)\le\gamma(\mu_{\vec{k}})|c|_v^{\epsilon'/2}=\textup{exp}(-I(\mu_{\vec{k}}^*))|c|_v^{\epsilon'/2}.\end{equation} As, by (\ref{eqn:bdabove}) and the observation immediately succeeding it,  \begin{equation}\label{eqn:weaklimit}\textup{exp}(-I(\mu_{\vec{k}}^*))\le\textup{exp}(-I(\mu_{\vec{k}_0}^*))=|c|_v^{-\epsilon'}\end{equation} for $\epsilon'=\epsilon'(\epsilon_1)>0$, the proof is then completed by taking $\epsilon_2=\epsilon'/2$ and combining (\ref{eqn:smalldiam}) and (\ref{eqn:weaklimit}).\end{proof}

\begin{definition*} For $v\in M_K^0$, $\epsilon>0$, and a finite set $T\subseteq\mathcal{K}_v$ of cardinality $n$, we say that $T$ is \emph{$\epsilon$-equidistributed (at $v$)} if there are $b_i$ elements of $T$ in each $\mathcal{B}_{2,i}$, where $b_i$ satisfies \[\fr{1-\epsilon}{d^2}n<b_i<\fr{1+\epsilon}{d^2}n.\]\end{definition*}

From Lemma \ref{lem:finitecapacity} and Proposition \ref{prop:equidistribute}, we obtain a global quantitative equidistribution statement: given $\epsilon>0$, any sufficiently large set of $K$-rational preperiodic points of $f$ is $\epsilon$-equidistributed at ``most" places of bad reduction. 

\begin{cor}{\label{cor:bucketequid}} Let $\epsilon>0$, and let $0<\delta<1$. There are real numbers $M(\epsilon,\delta),\xi(\epsilon,\delta)$, and $N(\epsilon,\delta)$ such that if $h(c)\ge M$, \[\sum_{v\in M_K^0\setminus\{v\mid d\}}r_v\lambda_v(c)\ge(1-\xi)h(c),\] and $T$ is a set of $K$-rational preperiodic points of $f$ with $|T|=n\ge N$, then there is an $\delta$-slice $S$ of bad places $v\in M_K^0\setminus\{v\mid d\}$ such that for each $v\in S$, the set $T$ is $\epsilon$-equidistributed at $v$. 
\end{cor}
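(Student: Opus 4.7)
The plan is a contradiction argument driven by the product formula. For any two distinct $K$-rational points $z_i\ne z_j$, we have $\sum_v r_v\log|z_i-z_j|_v=0$; averaging over all ordered pairs in $T$ gives the global identity
\[
\sum_{v\in M_K} r_v\log d_v(T)\;=\;0.
\]
I would split $M_K$ into four groups: (i) archimedean places, (ii) nonarchimedean bad places dividing $d$, (iii) nonarchimedean good places, and (iv) nonarchimedean bad places in $\Sigma:=M_K^0\setminus\{v\mid d\}$. Since $T$ consists of preperiodic points, $T\subseteq\mathcal{K}_v$ for every $v$, which is precisely what allows a place-by-place size bound on $\log d_v(T)$.

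Next I would bound $r_v\log d_v(T)$ from above on each group. At (iii), $\mathcal{K}_v\subseteq D(0,1)$ gives $\log d_v(T)\le 0$. At (i) and (ii), the filled Julia set has diameter $\le|c|_v^{1/d}+1$ for $|c|_v$ large and is bounded by an absolute constant otherwise, so $\log d_v(T)\le \tfrac{1}{d}\lambda_v(c)+O(1)$ with the $O(1)$ summing to an absolute constant over the finitely many archimedean places. At places in group (iv), two sub-cases arise: if $T$ is $\epsilon$-equidistributed at $v$, Lemma \ref{lem:finitecapacity} yields $\log d_v(T)\le \epsilon'\lambda_v(c)$ for any preassigned $\epsilon'>0$ once $n\ge N_1(\epsilon')$; otherwise, Proposition \ref{prop:equidistribute} yields $\log d_v(T)\le -\epsilon_2(\epsilon)\lambda_v(c)$ with $\epsilon_2(\epsilon)>0$ once $n\ge N_2(\epsilon)$. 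Crucially, the thresholds $N_1,N_2$ are uniform in $v\in\Sigma$ by the uniformity built into Lemma \ref{lem:IdealizationLemma}.

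Assume for contradiction that $S_0:=\{v\in\Sigma:T\text{ is }\epsilon\text{-equidistributed at }v\}$ fails to be a $\delta$-slice, so that $\sum_{v\in\Sigma\setminus S_0}r_v\lambda_v(c)>(1-\delta)\sum_{v\in\Sigma}r_v\lambda_v(c)\ge(1-\delta)(1-\xi)h(c)$. The hypothesis bounds the $\lambda_v(c)$-mass outside $\Sigma$ by $\xi h(c)$, so groups (i)+(ii) together contribute at most $\tfrac{\xi}{d}h(c)+O(1)$, group (iii) contributes nonpositively, places in $S_0$ contribute at most $\epsilon'h(c)$, and places in $\Sigma\setminus S_0$ contribute at most $-\epsilon_2(\epsilon)(1-\delta)(1-\xi)h(c)$. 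Summing and invoking the product formula gives
\[
0\;\le\;\Bigl(\tfrac{\xi}{d}+\epsilon'-\epsilon_2(\epsilon)(1-\delta)(1-\xi)\Bigr)h(c)+O(1).
\]
Choosing $\xi=\xi(\epsilon,\delta)$ and $\epsilon'=\epsilon'(\epsilon,\delta)$ small enough that the bracketed coefficient is a strictly negative constant, and then taking $M=M(\epsilon,\delta)$ large enough that the $O(1)$ is dominated whenever $h(c)\ge M$, produces the desired contradiction; the threshold $N(\epsilon,\delta)$ is then $\max(N_1(\epsilon'),N_2(\epsilon))$.

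The main obstacle I anticipate is purely bookkeeping rather than conceptual: ensuring that the $O(1)$ term in the archimedean and $v\mid d$ bounds is an absolute constant (independent of $T$ and of $f$ beyond the dependence already absorbed in $h(c)$), and verifying that the per-place uniformity provided by Lemma \ref{lem:IdealizationLemma} genuinely produces a single global threshold $N(\epsilon,\delta)$ rather than one depending on $v$. The substantive input is already in Lemma \ref{lem:finitecapacity} and Proposition \ref{prop:equidistribute}; the product formula is the mechanism that converts the strong negative contribution at non-equidistributed bad places into a global constraint on how many such places can occur.
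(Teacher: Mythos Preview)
Your proposal is correct and follows essentially the same approach as the paper's proof: both argue by contradiction via the product formula identity $\sum_v r_v\log d_v(T)=0$, decompose $M_K$ into archimedean/$v\mid d$, good nonarchimedean, and bad nonarchimedean places, bound the first two groups by $\tfrac{1}{d}\lambda_v(c)+O(1)$ and $0$ respectively, and on the bad places invoke Lemma~\ref{lem:finitecapacity} (small positive contribution at equidistributed places) and Proposition~\ref{prop:equidistribute} (strictly negative contribution at non-equidistributed places), then choose $\xi$, the auxiliary $\epsilon'$, and $M$ to force the total below zero. The only cosmetic difference is that the paper cites \cite[Lemmas 3 and 6]{Ingram:mincanht} for the explicit bound $\log d_v(T)\le\tfrac{1}{d}\lambda_v(c)+\log 4$ at $v\in M_K^\infty\cup\{v\mid d\}$ rather than a generic $O(1)$, and absorbs that constant by writing the combined archimedean/$v\mid d$ contribution as $\tfrac{2\xi}{d}\sum_{v\in M_K^0\setminus\{v\mid d\}}r_v\lambda_v(c)$ once $h(c)\gg_\xi 1$.
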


\begin{proof} Let $T$ be a set of $K$-rational preperiodic points of $f$, with $|T|=n$. Let $S_1$ be the set of places of bad reduction for $f$ not dividing $d$ at which $T$ is $\epsilon$-equidistributed, and let $S_2$ be the set of places of bad reduction for $f$ not dividing $d$ at which $T$ fails to be $\epsilon$-equidistributed. Suppose that \[\sum_{v\in S_1}r_v\lambda_v(c)\le\delta_0\sum_{v\in M_K^0\setminus\{v\mid d\}}r_v\lambda_v(c)\] for $\delta_0<\delta$. We will show that this leads to a contradiction when $n$ is sufficiently large. Let $0<\xi\le1$ be such that \[\sum_{v\in M_K^0\setminus\{v\mid d\}}r_v\lambda_v(c)\ge(1-\xi)h(c).\] For $v\in M_K^\infty\cup\{v\mid d\}$, \cite[Lemmas 3 and 6]{Ingram:mincanht} yields \[\log d_v(T)\le\fr{1}{d}\lambda_v(c)+\log4.\] Thus, when $h(c)\gg_\xi1$, we have \begin{equation}{\label{ineq:2xi}}\sum_{M_K^\infty\cup\{v\mid d\}}r_v\log d_v(T)\le\fr{2\xi}{d}\sum_{v\in M_K^0\setminus\{v\mid d\}}r_v\lambda_v(c).\end{equation} Let $\epsilon_1>0$. By Lemma \ref{lem:finitecapacity}, there is an $N_1=N_1(\epsilon_1)$ such that if $n\ge N_1=N_1(\epsilon_1)$, then $d_v(T)\le|c|_v^{\epsilon_1}$ for any $v\in M_K^0\setminus\{v\mid d\}$. Assume that $n\ge N_1$, and $h(c)\gg_\xi1$, so that (\ref{ineq:2xi}) holds. Then \begin{equation}{\label{ineq:goodeq}}\sum_{M_K^\infty\cup\{v\mid d\}}r_v\log d_v(T)+\sum_{v\in S_1}r_v\log d_v(T)<\left(\fr{2\xi}{d}+\epsilon_1\delta_0\right)\sum_{v\in M_K^0\setminus\{v\mid d\}}r_v\lambda_v(c).\end{equation} Moreover, by Proposition \ref{prop:equidistribute}, there is an $\epsilon_2(\epsilon)>0$ and an $N_2(\epsilon)$ such that if $n\ge N_2$ and $T$ fails to be $\epsilon$-equidistributed at some $v\in M_K^0\setminus\{v\mid d\}$, then $d_v(T)\le|c|_v^{-\epsilon_2}$. Suppose $n\ge\max\{N_1,N_2\}$. Then \begin{equation}{\label{ineq:badeq}}\begin{split}\sum_{v\in S_2}r_v\log d_v(T)&\le-\epsilon_2\sum_{v\in S_2}r_v\lambda_v(c)\\&\le-\epsilon_2(1-\delta_0)\sum_{v\in M_K^0\setminus\{v\mid d\}}r_v\lambda_v(c).\end{split}\end{equation} Observing that for $v\in M_K^0$ a place of good reduction, $\log d_v(T)\le 0$, combining (\ref{ineq:goodeq}) and (\ref{ineq:badeq}) yields \[\sum_{v\in M_K}r_v\log d_v(T)\le \left(\fr{2\xi}{d}+\epsilon_1\delta_0-\epsilon_2(1-\delta_0)\right)\sum_{v\in M_K^0\setminus\{v\mid d\}}r_v\lambda_v(c).\] If $\epsilon_1,\xi$ are chosen to be sufficiently small (depending on $\delta_0$ and $\epsilon_2$), then the right-hand side is strictly less than zero, contradicting the product formula. Since $\delta_0<\delta$, this proves the desired claim.\end{proof} 

\section{An easier case of uniformity}{\label{section:easycase}}

This section is devoted to proving that uniform boundedness holds across any family of polynomials having a subset of places of bad reduction of $f$ contributing at least some fixed proportion to $h(c)$. From this fact, one sees that the difficulty in proving Theorem \ref{thm:UBCunicrit} lies in dealing with parameters $c$ for which the height contribution from each prime $\mathfrak{p}$ with $v_\mathfrak{p}(c)<0$ is arbitrarily small. In the proof of Theorem \ref{thm:UBCunicrit}, we will need to assume the height contribution is small at the archimedean places, and at the places dividing $d$. Proposition \ref{prop:xi} suffices to handle maps for which this does not hold. It is latent in the proofs of \cite[Main Theorem]{Benedetto}, \cite[Theorem 1]{Ingram:mincanht}, and \cite[Theorem 1.2]{Looper:mincanht} (which in turn are evocative of Lemmas 3 and 4 of \cite{Silverman:mincanht}).

\begin{prop}{\label{prop:xi}} Fix $d\ge2$ and $K$ a product formula field. Let $f(z)=z^d+c\in K[z]$, and let $\xi>0$. Let $s$ be a positive integer, and let $S$ be any nonempty set of places of $K$ with $|S|\le s$. If $K$ is a function field, assume $h(c)>0$. There is an $N=N(\xi,s,K)$ such that if $f$ satisfies \begin{equation*}\sum_{v\in S}r_v\lambda_v(c)\ge\xi h(c),\end{equation*} then there are at most $N$ preperiodic points of $f$ contained in $K$.\end{prop}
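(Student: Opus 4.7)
The approach is to bound $|T|$ for a set $T=\{P_1,\dots,P_n\}$ of distinct $K$-rational preperiodic points of $f$ by combining the product formula applied to the geometric pairwise-distance mean $d_v(T)$ with place-by-place upper bounds on $\log d_v(T)$, then exploiting the concentration of $h(c)$ on $S$ to sharpen the estimate at those places. Every preperiodic $P_i$ lies in the filled Julia set $\mathcal{K}_v$ at every $v$, which yields the baseline bounds $\log d_v(T)\le 0$ at good-reduction places, $\log d_v(T)\le\tfrac{1}{d}\lambda_v(c)+\log 4$ at archimedean places or places dividing $d$ (by the bounds behind \cite[Lemmas 3 and 6]{Ingram:mincanht}), and $\log d_v(T)\le\epsilon\lambda_v(c)$ at bad nonarchimedean $v\nmid d$ for $n\ge N_0(\epsilon)$ by Lemma~\ref{lem:finitecapacity}. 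Against $\sum_v r_v\log d_v(T)=0$ these alone are non-informative.

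At each $v\in S$ one obtains a strictly sharper bound by iterating the pigeonhole principle through the nested level-$m$ disk structure of Lemma~\ref{lem:shape}. For $v\in S$ nonarchimedean with $v\nmid d$: working down through $m$ levels, the $n$ points cluster inside disks of radius $r_m=|c|_v^{1/d-m(d-1)/d}$, and taking $m\asymp\log_d n$ forces a positive fraction of all pairs to lie within a single such disk (Galois-invariance of $K$-rational points within the nested structure rules out the equidistributed extremal case as $n$ grows), yielding
\begin{equation*}
\log d_v(T)\le -\kappa(n,d)\,\lambda_v(c)+O(1)
\end{equation*}
with $\kappa(n,d)\to\infty$ uniformly in $v\in M_K^0\smallsetminus\{v\mid d\}$. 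Analogous bounds at archimedean and $v\mid d$ places in $S$ follow from the equipotential-curve and ramified-reduction analyses in \cite[Main Theorem]{Benedetto}, \cite[Theorem 1]{Ingram:mincanht}, and \cite[Theorem 1.2]{Looper:mincanht}, again with constants depending only on $K$ and $d$.

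Inserting these into the product formula yields
\begin{equation*}
0=\sum_v r_v\log d_v(T)\le C(d)\,h(c)-\kappa(n,d)\sum_{v\in S}r_v\lambda_v(c)+O_s(1)\le\bigl(C(d)-\xi\,\kappa(n,d)\bigr)h(c)+O_s(1),
\end{equation*}
which fails as soon as $\xi\,\kappa(n,d)>C(d)+O_s(1)/h(c)$. In the number-field case, small $h(c)$ is handled directly by Northcott's theorem (preperiodic points have $h(P_i)=O(h(c))$, so bounded $h(c)$ gives bounded $|T|$), while larger $h(c)$ absorbs the $O_s(1)$ term; in the function-field case the hypothesis $h(c)>0$ rules out the degenerate regime. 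The resulting threshold on $n$ depends only on $\xi$, $s$, $K$, and $d$, giving the desired $N=N(\xi,s,K)$.

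The main obstacle will be establishing the sharpened bound $\log d_v(T)\le-\kappa(n,d)\lambda_v(c)+O(1)$ uniformly across \emph{all} places in $S$, since Lemma~\ref{lem:shape} applies cleanly only to nonarchimedean $v\nmid d$; the archimedean and $v\mid d$ cases require a careful adaptation of the pigeonhole/partition machinery from the cited lower-bound-on-canonical-height literature, taking care that the constants are uniform in $v$ so that the final summation against the product formula goes through.
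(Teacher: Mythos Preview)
Your central claim---that at each $v\in S$ one has $\log d_v(T)\le -\kappa(n,d)\,\lambda_v(c)+O(1)$ with $\kappa(n,d)\to\infty$---is false, and the argument collapses without it. The filled Julia set $\mathcal{K}_v$ has capacity $1$, so for \emph{any} large set $T\subseteq\mathcal{K}_v$ the quantity $\log d_v(T)$ tends to $0$, not to $-\infty$; indeed Corollary~\ref{cor:bucketequid} shows that a large set of $K$-rational preperiodic points typically \emph{is} close to equidistributed at a bad place, in which case $d_v(T)\to 1$. The pigeonhole step you sketch only forces roughly $n/d^m$ points into some level-$m$ disk, and the resulting $\sim d^{-2m}$ fraction of close pairs is exactly cancelled by the remaining far pairs in the capacity computation. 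The appeal to ``Galois-invariance of $K$-rational points'' does nothing: the points are already in $K$, there is no nontrivial Galois action on them, and nothing obstructs their spreading across the level-$m$ components.

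The paper's proof uses the same product-formula skeleton but supplies the missing idea: rather than bounding $\log d_v(T)$ for all of $T$, one first passes to a \emph{subset} $T'\subseteq T$ chosen by pigeonhole simultaneously across the finitely many places in $S_0=M_K^\infty\cup\{v\mid d\}\cup\{v_0\}$ (with $v_0\in S$ chosen so that $r_{v_0}\lambda_{v_0}(c)\ge\frac{\xi}{s}h(c)$). Proposition~\ref{prop:excsmush} gives, at each $v\in S_0$, a partition of the preperiodic locus into $d^3$ disks of radius $\le e^{(\eta)_K}|c|_v^{3/d-2}$; pigeonhole then yields $T'$ with $|T'|\ge n/d^{3|S_0|}$ lying in a single such disk at every $v\in S_0$. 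This gives a \emph{fixed} negative coefficient $\log d_v(T')\le(3/d-2)\lambda_v(c)+(\eta)_K$ at those places, while Lemma~\ref{lem:finitecapacity} applied to $T'$ (large once $n$ is) gives $\log d_v(T')\le\epsilon\lambda_v(c)$ at the remaining bad places. The balance $(3/d-2)\cdot\frac{\xi}{s}+\epsilon<0$ for small $\epsilon$ then contradicts the product formula. The dependence on $s$ enters precisely because the pigeonhole is iterated over $|S_0|$ places, and Northcott (or $h(c)>0$ in the function-field case) handles small $h(c)$. Your outline had the right global shape but was missing this pass-to-a-subset step, without which no negative term is available to play against the product formula.
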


For a real number $\eta$, let $(\eta)_K=\eta$ if $K$ is a number field, and let $(\eta)_K=0$ if $K$ is a function field. Let $\eta_v=\eta$ if $v\in M_K^\infty$, and let $\eta_v=1$ otherwise. In our proof of Proposition \ref{prop:xi}, we will use a result governing the distance between a point of low local canonical height and the nearest root of $f^3$.

\begin{prop}\cite[Proofs of Corollary 3.4, Proposition 4.3]{Looper:mincanht}\label{prop:excsmush} Let $f(z)=z^d+c\in K[z]$ with $d\ge2$, and let $v\in M_K$. Let $\alpha\in\CC_v$. There is a constant $\eta$ depending only on $d$ such that $\alpha\in D(0,2_v|c|_v^{1/d})$ implies that every $y\in f^{-3}(\alpha)$ satisfies \[\min_{\beta\in f^{-3}(0)}\log|y-\beta|_v\le \left(\fr{3}{d}-2\right)\lambda_v(c)+(\eta)_K.\]
\end{prop}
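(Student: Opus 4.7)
The plan is to iteratively pull $\alpha$ back under $f$ three times, at each level pairing a preimage of $\alpha$ with a nearest preimage of $0$. Each pullback tightens the pairing by a factor of roughly $|c|_v^{(d-1)/d}$, so after three pullbacks one obtains a bound that is in fact slightly better than the claimed $|c|_v^{3/d-2}$.

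The principal case is $v\in M_K^0$ nonarchimedean with $v\nmid d$ and $|c|_v>1$. Here $2_v=1$, so the hypothesis gives $\alpha\in D(0,|c|_v^{1/d})$, and Lemma~\ref{lem:shape} applied at $m=3$ says that $f^{-3}(D(0,|c|_v^{1/d}))$ is a disjoint union of $d^3$ closed disks of radius $|c|_v^{1/d-3(d-1)/d}=|c|_v^{4/d-3}$. Since $f^{-3}(0)\subseteq f^{-3}(D(0,|c|_v^{1/d}))$ has exactly $d^3$ elements, each disk contains precisely one root of $f^3$. In particular the disk containing $y\in f^{-3}(\alpha)$ also contains some $\beta\in f^{-3}(0)$, giving
\[
|y-\beta|_v\;\le\;|c|_v^{4/d-3}\;\le\;|c|_v^{3/d-2},
\]
using $|c|_v>1$ and $4/d-3\le 3/d-2$. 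At a nonarchimedean place of good reduction, $\lambda_v(c)=0$ and $\mathcal{K}_v=D(0,1)$ contains both $y$ and every $\beta\in f^{-3}(0)$, so $\log|y-\beta|_v\le 0$, which is absorbed into $\eta$.

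The remaining cases, $v\mid d$ and $v$ archimedean, require the pullback argument by hand since Lemma~\ref{lem:shape} does not cover them. Writing $y_i=f^{3-i}(y)$ and $\beta_i=f^{3-i}(\beta)$, I would select at each step the preimage $\beta_i^*$ of the previously chosen $\beta_{i+1}^*$ that is nearest to $y_i$, and apply the factorization
\[
y_{i+1}-\beta_{i+1}^*\;=\;y_i^d-(\beta_i^*)^d\;=\;\prod_{j=0}^{d-1}\bigl(y_i-\zeta^j\beta_i^*\bigr),
\]
with $\zeta$ a primitive $d$-th root of unity. The $d-1$ factors corresponding to $j\ne *$ are each comparable to $|c|_v^{(d-1)/d}$ up to a $d$-dependent multiplicative constant (arising from $|1-\zeta^j|$ at archimedean places, or from the clustering of roots of unity at $v\mid d$). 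Dividing yields the recursion $|y_i-\beta_i^*|_v\lesssim |y_{i+1}-\beta_{i+1}^*|_v/|c|_v^{(d-1)/d}$, and three iterations give $|y-\beta|_v\le C(d,v)\,|c|_v^{4/d-3}$.

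The main obstacle is making $C(d,v)$ uniform in $v$. At $v\mid d$ the $d$-th roots of unity cluster and naive estimates pick up losses of order $|d|_v^{-1}$ per pullback; the key remedy is the elementary bound $|d|_v^{-1}\le d$ for $d\in\Z$ at every place (since the local contribution of $d$ cannot exceed its rational multiplicity), which controls the aggregate loss over three pullbacks by a fixed power of $d$. At archimedean places the losses come from $|1-\zeta^j|^{-1}$ and are bounded absolutely in terms of $d$; the subcase of $|c|_v$ small is handled separately by observing that all quantities in sight are then bounded by a $d$-dependent constant, rendering the inequality trivial. The totality of these losses is absorbed into $\eta=\eta(d)$, completing the proof.
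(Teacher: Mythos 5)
The paper does not actually prove this proposition; it is imported verbatim from the author's earlier work (the citation is to the proofs of Corollary 3.4 and Proposition 4.3 of \cite{Looper:mincanht}), so there is no in-paper argument to compare against. Judged on its own terms, your reconstruction is correct and is in the same local-analytic spirit as that reference: at nonarchimedean places of bad reduction with $v\nmid d$ you get the bound directly from Lemma \ref{lem:shape} with $m=3$ (radius $|c|_v^{4/d-3}\le|c|_v^{3/d-2}$, since each of the $d^3$ disks maps onto $\mathcal{B}(0,|c|_v^{1/d})$ with degree one and hence contains a point of $f^{-3}(0)$ as well as the given $y$), good reduction is trivial, and at archimedean places and places over $d$ the three-fold pullback via $y'^d-\beta'^d=\prod_j(y'-\zeta^j\beta')$, choosing the nearest preimage at each level, gives exactly the loss structure you describe, with the $v\mid d$ loss controlled by $\prod_{j=1}^{d-1}|1-\zeta^j|_v=|d|_v\ge 1/d$ and the archimedean loss by $|1-\zeta^j|\ge 2\sin(\pi/d)$ together with the small-$|c|_v$ subcase being trivial.

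A few points you should tighten, none of which is a genuine gap. First, the phrase ``each comparable to $|c|_v^{(d-1)/d}$'' is a slip: each non-nearest factor is comparable to $|c|_v^{1/d}$ (it is at least $\tfrac12|\beta'|\,|\zeta^{j_0}-\zeta^j|$ archimedeanly, or $|\beta'|\,|\zeta^{j_0}-\zeta^j|_v$ ultrametrically), and it is their \emph{product} over the $d-1$ indices that is comparable to $|c|_v^{(d-1)/d}$; your displayed recursion is the correct one, so this is cosmetic, but the lower bound $|\beta'|\gtrsim_d|c|_v^{1/d}$ needs to be stated and justified: every iterated preimage $\tilde\beta$ of $0$ lies in $D(0,2_v|c|_v^{1/d})$ (escape estimate, or Lemma \ref{lem:localhtbound}), so $|\beta'|^d=|\tilde\beta-c|_v\asymp|c|_v$ once $|c|_v\gg_d1$. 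Second, the indexing in the display is reversed relative to your definition $y_i=f^{3-i}(y)$ (one has $y_i-\beta_i=y_{i+1}^d-\beta_{i+1}^d$), which should be fixed. Third, since $(\eta)_K=0$ when $K$ is a function field, you should remark that for a characteristic-zero function field there are no archimedean places and $|d|_v=1$ for all $v$, so only the constant-free nonarchimedean cases occur and the statement with zero additive constant is indeed what your argument yields.
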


\begin{proof}[Proof of Proposition \ref{prop:xi}] Suppose $f$ satisfies \begin{equation*}\sum_{v\in S}r_v\lambda_v(c)\ge\xi h(c)\end{equation*} for some nonempty set $S$ of places of $K$ with $|S|\le s$. We must have \[r_{v_0}\lambda_{v_0}(c)\ge\fr{\xi}{s}h(c)\] for some $v_0\in S$. Let $S_0=M_K^\infty\cup\{v\mid d\}\cup\{v_0\}$, with $|S_0|=s_0$. Then a fortiori, \begin{equation}\label{ineq:S0}\sum_{v\in S_0}r_v\lambda_v(c)\ge\fr{\xi}{s}h(c).\end{equation} Let $\epsilon>0$, and let $T$ be the set of $K$-rational preperiodic points of $f$, with $|T|=n$. Let $T'\subseteq T$ be such that for each $v\in S_0$, $T'$ is contained in a single disk of radius at most $e^{(\eta)_K}|c|_v^{\fr{3}{d}-2}$ in $\CC_v$. Proposition \ref{prop:excsmush} and the pigeonhole principle imply that if $n\gg_{s_0}1$, then there exists such a nonempty $T'$ satisfying $|T'|\ge n/d^{3s_0}$. By Lemma \ref{lem:finitecapacity}, there is an $N=N(\epsilon)$ such that if $|T'|\ge N$, then \[d_v(T')\le|c|_v^\epsilon\] for all $v\in M_K\setminus S_0$. This yields \begin{equation}\label{ineq:T'crush}\sum_{v\in M_K}r_v\log d_v(T')\le \sum_{v\in S_0}\left(\left(\fr{3}{d}-2\right)r_v\lambda_v(c)+(\eta)_K\right)+\sum_{v\in M_K\setminus S_0}r_v\epsilon\lambda_v(c).\end{equation} Suppose $K$ is a number field. If $\epsilon$ is chosen to be sufficiently small, and $h(c)\gg_{\xi,s,s_0,\epsilon}1$, then (\ref{ineq:S0}) and (\ref{ineq:T'crush}) give \begin{equation*}\begin{split}\sum_{v\in M_K}r_v\log d_v(T')&\le \sum_{v\in S_0}\left(\left(\fr{3}{d}-2\right)r_v\lambda_v(c)+\eta\right)+\sum_{v\in M_K\setminus S_0}r_v\epsilon\lambda_v(c)\\&\le \left(\fr{3}{d}-2+\epsilon\right)\fr{\xi}{s}h(c)+\sum_{v\in M_K\setminus S_0}r_v\epsilon\lambda_v(c)\\&\le\left(\fr{3}{d}-2+\epsilon\right)\fr{\xi}{s}h(c) +\left(1-\left(\xi-\fr{\xi}{s}\right)\right)\epsilon h(c)\\&<0,\end{split}\end{equation*} contradicting the product formula. Northcott's Theorem then accounts for the finitely many remaining values of $c\in K$. If $K$ is a function field, then $S_0=\{v_0\}$ and so (\ref{ineq:T'crush}) becomes \begin{equation}\label{ineq:fnfldxi}\begin{split}\sum_{v\in M_K}r_v\log d_v(T')&\le \left(\fr{3}{d}-2\right)r_{v_0}\lambda_{v_0}(c)+\sum_{v\ne v_0\in M_K}r_v\epsilon\lambda_v(c)\\&\le\left(\fr{3}{d}-2\right)\fr{\xi}{s}h(c)+\left(1-\left(\xi-\fr{\xi}{s}\right)\right)\epsilon h(c).\end{split}\end{equation} Since $h(c)>0$ by assumption, the right-hand side of (\ref{ineq:fnfldxi}) is strictly less than $0$ when $\epsilon$ is sufficiently small, contradicting the product formula. The proof is completed by noting that $|T'|\ge N=N(\epsilon)$ when $n\gg_{s_0,\epsilon}1$.\end{proof} 

\section{Adelic properties of differences of preperiodic points}{\label{section:adelic}}

Throughout this section, we fix $d\ge2$, $K$ a product formula field, and $f(z)=z^d+c\in K[z]$. Our goal is to show that when $f$ has a large number of $K$-rational preperiodic points, elements of the form $p_i-p_j$ for preperiodic $p_i,p_j\in K$ typically have their prime support mostly contained within the set of places of bad reduction. This can be viewed as a dynamical analogue of what one observes in the setting of groups, where roots of unity, as well as torsion points on elliptic curves, stay distinct modulo primes of good reduction not dividing their orders. We also remark that this phenomenon can be shown more easily in the case of periodic points, by leveraging for example \cite[Theorem 6.3]{MortonSilverman:units} or \cite[Lemma 1]{Narkiewicz} in conjunction with Lemma \ref{lem:finitecapacity}. 

Corollary \ref{cor:adgoodbars} is formulated in terms of how well behaved the differences of preperiodic points must be for our purposes in the proof of Theorem \ref{thm:UBCunicrit}. In particular, the numbers $1/600$ and $1/800$, which we employ for concreteness, may each be replaced by any other positive real numbers.

\begin{prop}{\label{prop:adgoodbars}} Let $\epsilon>0$. Let $S_1$ be the set of places of bad reduction for $f$ not dividing $d$. Fix $p_1\in K$ a preperiodic point of $f$, and suppose \[\mathcal{T}:=\{p_j-p_1: p_j\in K\text{ a preperiodic point of }f\}\] has cardinality $n$. There exist real numbers $1>\xi=\xi(\epsilon)>0$ and $M=M(\epsilon)$, and an integer $N=N(\epsilon)$, such that if $n\ge N$, $h(c)\ge M$ and \[\sum_{v\in M_K^\infty\cup\{v\mid d\}}r_v\lambda_v(c)\le\xi h(c),\] 
then at least $(1-\epsilon)n$ elements $a_j\in\mathcal{T}$ satisfy \[\sum_{v\in S_1} r_v\log|a_j|_v\le\fr{1}{800}h(c).\]\end{prop}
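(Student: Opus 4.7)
The plan is a Markov-style argument built from three inputs: the local equidistribution bound of Lemma~\ref{lem:finitecapacity} at the bad non-$d$ places, the product formula, and a trivial containment of the filled Julia set in a disk of radius $O_d(\max\{1,|c|_v^{1/d}\})$ at the remaining places. Set $A_j := \sum_{v \in S_1} r_v \log|a_j|_v$ for each nonzero $a_j = p_j - p_1 \in \mathcal{T}$; it suffices to show that at most $\epsilon n$ of the $A_j$ exceed $\tfrac{1}{800}h(c)$. For the \emph{global} upper bound on $\sum_j A_j$, I would apply Lemma~\ref{lem:finitecapacity} at each $v \in S_1$ with distinguished point $w = p_1$: since every $K$-rational preperiodic point of $f$ lies in $\mathcal{K}_v$, the lemma gives, for any prescribed $\epsilon_0 > 0$ and all $n$ past a threshold $N_0(\epsilon_0)$ uniform in $v$, $\sum_{p_j \neq p_1} \log|p_j - p_1|_v \leq \epsilon_0(n-1)\lambda_v(c)$. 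Weighting by $r_v$ and summing over $v \in S_1$, together with $\sum_{v \in S_1} r_v\lambda_v(c) \leq h(c)$, yields
\[ \sum_{p_j \neq p_1} A_j \leq \epsilon_0(n-1)\, h(c). \]

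The second ingredient is a \emph{pointwise} lower bound on each $A_j$, obtained via the product formula $A_j = -\sum_{v \notin S_1} r_v \log|a_j|_v$. At places $v$ of good reduction with $v \nmid d$, $f$ has good reduction and $\mathcal{K}_v \subseteq \{|z|_v \leq 1\}$, so $\log|a_j|_v \leq 0$ and those terms contribute non-negatively. At the remaining places $v \in E := M_K^\infty \cup \{v \mid d\}$, a direct escape estimate shows $\mathcal{K}_v \subseteq D(0, C_v \max\{1,|c|_v^{1/d}\})$, with $C_v = 1$ when $v$ is nonarchimedean, so $\log|a_j|_v \leq \tfrac{1}{d}\lambda_v(c) + O_d(1)$. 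Summing over $v \in E$ and invoking the hypothesis $\sum_{v \in E} r_v \lambda_v(c) \leq \xi h(c)$ gives the uniform lower bound
\[ A_j \geq -\tfrac{\xi}{d}\, h(c) - C_1 \]
for some constant $C_1 = C_1(d,K)$.

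A Markov argument then closes the proof: if $k$ of the $A_j$ exceed $\tfrac{1}{800}h(c)$, splitting $\sum_j A_j$ between those and the remainder and substituting the two bounds above gives
\[ \tfrac{k}{800}\, h(c) \;\leq\; \epsilon_0(n-1)\, h(c) \;+\; (n-1-k)\Bigl(\tfrac{\xi}{d}\, h(c) + C_1\Bigr). \]
Taking $M$ large enough that $C_1 \leq \epsilon_0 h(c)$ and then choosing $\epsilon_0$ and $\xi$ sufficiently small in terms of $\epsilon$ forces $k \leq \epsilon n$; together with the trivial element $0 = p_1 - p_1 \in \mathcal{T}$, this gives at least $(1-\epsilon)n$ elements with the desired bound. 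The hardest part will be coordinating the two estimates: Lemma~\ref{lem:finitecapacity} only controls an \emph{average} of $\log|p_j - p_1|_v$ across bad places, so without a pointwise lower bound on $A_j$---and in particular without the $\xi$-smallness hypothesis on $E$, which is what makes that lower bound useful---a small subset of the $a_j$ could a priori absorb the entire global sum and evade the Markov estimate. Everything else is clean bookkeeping with the product formula.
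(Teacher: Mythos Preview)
Your proposal is correct and follows essentially the same route as the paper's proof: both use Lemma~\ref{lem:finitecapacity} (applied at each $v\in S_1$ with $w=p_1$) to bound the average $\tfrac{1}{n-1}\sum_j A_j$ above, then combine the product formula with the containment $\mathcal{K}_v\subseteq D(0,O_d(\max\{1,|c|_v^{1/d}\}))$ at $v\in M_K^\infty\cup\{v\mid d\}$ to get a uniform pointwise lower bound $A_j\ge -\tfrac{\xi}{d}h(c)-O_d(1)$, and finish with a Markov-type count. The constants you leave implicit ($\epsilon_0$, $C_1$) correspond to the paper's explicit choices $\epsilon/900$ and $\log 4$, and your handling of the trivial element $0\in\mathcal{T}$ matches the paper's implicit treatment.
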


\begin{proof} Let $\epsilon>0$, and suppose $f(z)=z^d+c\in K[z]$ has the property \[\sum_{v\in M_K^\infty\cup\{v\mid d\}}r_v\lambda_v(c)\le\xi h(c)\] for some $1>\xi>0$. By Lemma \ref{lem:finitecapacity}, there is an $N=N(\epsilon)$ such that if $n\ge N$ and $v\in S_1$, then \[\fr{1}{n}\sum_{a_j\in\mathcal{T}}\log|a_j|_v\le\fr{\epsilon}{900}\lambda_v(c),\] and hence \begin{equation}{\label{eqn:avgbar}} \fr{1}{n}\sum_{a_j\in\mathcal{T}}\sum_{v\in S_1}r_v\log|a_j|_v\le\fr{\epsilon}{900}\sum_{v\in S_1}r_v\lambda_v(c).\end{equation} Assume $n\ge N$. On the other hand, we also claim that for any $a_j\in\mathcal{T}$, \begin{equation}{\label{eqn:barlowerbound}} \sum_{v\in S_1}r_v\log|a_j|_v\ge-\log4-\fr{\xi}{d}h(c).\end{equation} Indeed, \[\sum_{v\in M_K^\infty\cup\{v\mid d\}}r_v\lambda_v(c)\le\xi h(c),\] so \[\sum_{v\in M_K^\infty\cup\{v\mid d\}}r_v\log|a_j|_v\le\sum_{v\in M_K^\infty\cup\{v\mid d\}}r_v\left(\fr{1}{d}\lambda_v(c)+\log4\right)\le\log4+\fr{\xi}{d}h(c).\] From (\ref{eqn:barlowerbound}), one sees that if $h(c)\gg_\xi1$, then \begin{equation}{\label{eqn:xi}}\sum_{v\in S_1}r_v\log|a_j|_v\ge-\xi h(c).\end{equation} It follows that at least $(1-\epsilon)n$ of the elements $a_j\in\mathcal{T}$ must satisfy \[\sum_{v\in S_1}r_v\log|a_j|_v\le\fr{1}{800}\sum_{v\in S_1}r_v\lambda_v(c);\] otherwise, by (\ref{eqn:xi}), we see that for all $h(c)\gg_\xi1$ and any sufficiently small choice of $\xi=\xi(\epsilon)$, we would have \[\fr{1}{n}\sum_{a_j\in\mathcal{T}}\sum_{v\in S_1}r_v\log|a_j|_v>\sum_{v\in S_1}r_v\lambda_v(c)\left(\fr{\epsilon}{800}-\xi(1-\epsilon)\right)>\fr{\epsilon}{900}\sum_{v\in S_1}r_v\lambda_v(c),\] contradicting (\ref{eqn:avgbar}). 
\end{proof}

Let $S_{2,1}$ be the set of places of good reduction for $f$, and let $S_{2,2}=M_K^\infty\cup\{v\mid d\}$. 

\begin{definition*} We say that $a\in K^*$ is \emph{adelically good} if \begin{equation*}\sum_{v=\mathfrak{p}\in S_{2,1}} v_\mathfrak{p}(a)N_\mathfrak{p}\le\fr{1}{600}h(c),\end{equation*} and \begin{equation}\label{eqn:archplaces}\sum_{v\in S_{2,2}}r_v\log|a|_v\ge-\fr{1}{800}h(c).\end{equation}
\end{definition*} By the product formula, Proposition \ref{prop:adgoodbars} immediately implies: \begin{cor}{\label{cor:adgoodbars}} Let $\epsilon>0$. Fix $p_1\in K$ a preperiodic point of $f$, and suppose \[\mathcal{T}:=\{p_j-p_1:p_j\in K\text{ is a preperiodic point of }f\}\] has cardinality $n$. There exist an integer $N=N(\epsilon)$ and real numbers $1>\xi=\xi(\epsilon)>0$, $M=M(\epsilon)$ such that if $h(c)\ge M$, $n\ge N$, and \[\sum_{v\in S_{2,2}}r_v\lambda_v(c)\le\xi h(c),\] then at least $(1-\epsilon)n$ elements of $\mathcal{T}$ are adelically good.\end{cor}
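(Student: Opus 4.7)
My plan is to deduce the corollary directly from Proposition \ref{prop:adgoodbars} and the product formula $\sum_{v \in M_K} r_v \log|a|_v = 0$, together with two elementary observations: (i) preperiodic points of $f$ are $v$-adic integers at every place of good reduction, and (ii) at every place $v \in S_{2,2}$, a difference $a_j = p_j - p_1$ of preperiodic points satisfies $\log|a_j|_v \le \tfrac{1}{d}\lambda_v(c) + \log 4$, which is the size bound already used in the proof of Proposition \ref{prop:adgoodbars} via \cite[Lemmas 3 and 6]{Ingram:mincanht}.

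First I would fix $\epsilon > 0$ and apply Proposition \ref{prop:adgoodbars} to obtain $N = N(\epsilon)$, $\xi = \xi(\epsilon)$, $M = M(\epsilon)$ (which I may shrink or enlarge below) such that at least $(1-\epsilon)n$ elements $a_j \in \mathcal{T}$ satisfy $\sum_{v \in S_1} r_v \log|a_j|_v \le \tfrac{1}{800}h(c)$. I would then argue that every such $a_j$ is adelically good.

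For the inequality (\ref{eqn:archplaces}), I would use observation (i): at a place of good reduction, $|c|_v \le 1$ forces any $z$ with $|z|_v > 1$ to escape under iteration of $f$, so $\mathcal{K}_v \subseteq D(0,1)$, and both $p_j$ and $p_1$ are $v$-adic integers. Hence $|a_j|_v \le 1$ and $r_v \log|a_j|_v \le 0$ for every $v \in S_{2,1}$. The product formula then gives
\[\sum_{v \in S_{2,2}} r_v \log|a_j|_v \;=\; -\sum_{v \in S_1} r_v \log|a_j|_v \;-\; \sum_{v \in S_{2,1}} r_v \log|a_j|_v \;\ge\; -\tfrac{1}{800}h(c),\]
using the $S_1$-bound from Proposition \ref{prop:adgoodbars} and the nonpositivity at $S_{2,1}$.

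For the remaining condition, observation (ii) together with the hypothesis $\sum_{v \in S_{2,2}} r_v \lambda_v(c) \le \xi h(c)$ yields $\sum_{v \in S_{2,2}} r_v \log|a_j|_v \le \tfrac{\xi}{d}h(c) + \log 4$. Combined with the $S_1$-bound and the product formula, and recalling that $-\sum_{v \in S_{2,1}} r_v \log|a_j|_v = \sum_{v=\mathfrak{p} \in S_{2,1}} v_\mathfrak{p}(a_j) N_\mathfrak{p}$ at nonarchimedean places, this gives
\[\sum_{v = \mathfrak{p} \in S_{2,1}} v_\mathfrak{p}(a_j) N_\mathfrak{p} \;\le\; \tfrac{1}{800}h(c) + \tfrac{\xi}{d}h(c) + \log 4,\]
which is $\le \tfrac{1}{600}h(c)$ provided $\xi$ is chosen small and $M$ is chosen large enough to absorb the $\log 4$. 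There is no substantive obstacle here: as the paper already signals, the corollary is essentially immediate from Proposition \ref{prop:adgoodbars} once one translates between $\log|a|_v$ and $v_\mathfrak{p}(a)N_\mathfrak{p}$ via the product formula and uses the trivial bound $|a_j|_v \le 1$ at good-reduction places.
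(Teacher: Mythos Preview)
Your argument is correct and is exactly the approach the paper intends: the paper's own proof is the single sentence ``By the product formula, Proposition \ref{prop:adgoodbars} immediately implies [the corollary],'' and you have simply spelled out that product-formula computation, using the integrality of preperiodic points at good places and the Ingram-type bound at $S_{2,2}$ already invoked in the proof of Proposition \ref{prop:adgoodbars}. The only cosmetic point is that $a_j=0\in\mathcal{T}$ must be excluded (the product formula does not apply to it), but this changes the count by at most one and is absorbed into the choice of $N$.
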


\section{The quadratic case}{\label{section:d2}}

Let $K$ be a number field or a one-variable function field of characteristic zero. We will be applying Conjecture \ref{conj:nconj} to $6$-tuples formed using a fixed preperiodic point $p_1\in K$ of $f(z)=z^2+c\in K[z]$, as well as four other hypothetical $K$-rational preperiodic points $p_2,\dots,p_5$ of $f$. We will refer to these tuples as hexagons, as illustrated in Figure \ref{figure:hexagon}.

\begin{definition*} Let $p_1\in K$ be preperiodic under $f(z)=z^2+c\in K[z]$. A \emph{hexagon with basepoint $p_1$} is an element \[w=(x_0,x_1,x_2,x_3,x_4,x_5)\in K^6\] such that $x_0=p_2-p_1$, $x_1=p_1-p_3$, $x_2=-p_1-p_4$, $x_3=p_5+p_1$, $x_4=p_3-p_5$, and $x_5=p_4-p_2$ for $K$-rational preperiodic points $p_2,p_3,p_4,p_5$ of $f$. \end{definition*}

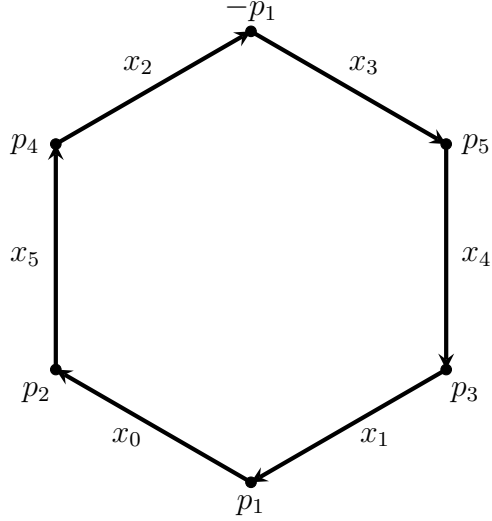
\begin{figure}[H]
	\begin{tikzpicture}[auto,scale=0.15,>=stealth]
	
	\draw[->, ultra thick, black](17.3,10) -- (0,0);
	\draw[->, ultra thick, black](17.3,30) -- (17.3,10);
	\draw[->, ultra thick, black](0,40) -- (17.3,30);
	\draw[->, ultra thick, black](-17.3,30) -- (0,40);
	\draw[->, ultra thick, black](-17.3,10) -- (-17.3,30);
	\draw[->, ultra thick, black](0,0) -- (-17.3,10);
	
	\draw (-20,20) node {$x_5$};
	\draw (20,20) node {$x_4$};
	\draw (-11,4) node {$x_0$};
	\draw (-20,30) node {$p_4$};
	\draw (20,30) node {$p_5$};
	\draw (0,-2) node {$p_1$};
	\draw (-19,8) node {$p_2$};
	\draw (11,4) node {$x_1$};
	\draw (19,8) node {$p_3$};
	\draw (-10,37) node {$x_2$};
	\draw (10,37) node {$x_3$};
	\draw (0,42) node {$-p_1$};
	\node at (0,0) {\pgfuseplotmark{*}};
	\node at (17.3,10) {\pgfuseplotmark{*}};
	\node at (-17.3,10) {\pgfuseplotmark{*}};
	\node at (0,40) {\pgfuseplotmark{*}};
	\node at (17.3,30) {\pgfuseplotmark{*}};
	\node at (-17.3,30) {\pgfuseplotmark{*}};
	\end{tikzpicture}
	\caption{A hexagon with basepoint $p_1$.}
	\label{figure:hexagon}
\end{figure}

\begin{proof}[Proof of Theorem \ref{thm:UBCunicrit} when $d=2$] Let $\epsilon>0$. Let $\xi=\xi(\epsilon)$, $N=N(\epsilon)$, and $M=M(\epsilon)$ be as in Corollary \ref{cor:adgoodbars}, and assume $f(z)=z^2+c\in K[z]$ satisfies \[\sum_{v\in M_K^0\setminus\{v\mid 2\}}r_v\lambda_v(c)\ge(1-\xi)h(c),\] $h(c)\ge M$, and that $f$ has $n\ge N$ preperiodic points in $K$. Suppose $p_1\in K$ is a preperiodic point of $f$. Let $[X_0:\cdots:X_5]$ be the standard homogeneous coordinates on $\mathbb{P}^5(K)$, and let $\mathcal{Z}=\mathcal{Z}(K,\epsilon,6)$ be as in Conjecture \ref{conj:nconj}. The variety $\mathcal{Z}$ is the zero locus of a finite set of nonzero homogeneous polynomials: by choosing only one of these, thereby possibly enlarging $\mathcal{Z}$, we can assume $\mathcal{Z}$ is defined by a single homogeneous polynomial in $G(X_0,\dots,X_5)\in\overline{K}[X_0,\dots,X_5]$. For any hexagon \[w=(x_0,x_1,x_2,x_3,x_4,x_5)=(p_2-p_1,p_1-p_3,-p_1-p_4,p_5+p_1,p_3-p_5,p_4-p_2)\] with basepoint $p_1$, we have $x_0+x_1+\dots+x_5=0$. Thus, letting $P=(x_0,x_1,x_2,x_3,x_4,x_5)\in\mathbb{P}^5(K)$, we have $P\in\mathcal{H}$, where $\mathcal{H}$ is the hyperplane in $\mathbb{P}^5(K)$ given by $X_0+\dots+X_5=0$. We would like to produce a hexagon $w=(x_0,x_1,x_2,x_3,x_4,x_5)$ such that the associated point  $P\in\mathbb{P}^5(K)$ satisfies $P\in\mathcal{H}\setminus\mathcal{Z}$. Letting $X_0=P_2-p_1$, $X_1=p_1-P_3$, $X_2=-p_1-P_4$, $X_3=P_5+p_1$, $X_4=P_3-P_5$, and $X_5=P_4-P_2$, we can write $G(X_0,\dots,X_5)=G(P_2,P_3,P_4,P_5)$ as \begin{equation}\label{eqn:leastmonomials}G(P_2,P_3,P_4,P_5)=\sum_{l=1}^ra_lP_2^{i_{l,2}}P_3^{i_{l,3}}P_4^{i_{l,4}}P_5^{i_{l,5}}\in\overline{K}[P_2,P_3,P_4,P_5].\end{equation}
	
For $\vec{m}=(m_3,m_4,m_5)\in\mathbb{N}^3$, let \[G_{\vec{m}}=\sum_{i}b_iP_2^{m_{2,i}}P_3^{m_3}P_4^{m_4}P_5^{m_5},\] where $m_{2,i}\ne m_{2,j}$ for all $i\ne j$, so that $G=\sum_{\vec{m}\in\mathbb{N}^3}G_{\vec{m}}$. A specialization $G(p_2,P_3,P_4,P_5)\in\overline{K}[P_3,P_4,P_5]$ is nonconstant in $P_3$ so long as $G_{\vec{m}}(p_2,P_3,P_4,P_5)\ne 0$ for some $\vec{m}=(m_3,m_4,m_5)$ such that $m_3>0$. Analogous statements hold for each of the variables $P_4,P_5$. Four successive claims result: \begin{enumerate}\item There is a finite set $\mathcal{Y}_2\subseteq\overline{K}$ such that if $p_j\notin\mathcal{Y}_2$, then $G(p_j,P_3,P_4,P_5)$ is nonconstant in each of $P_3,P_4,P_5$. If $D_2$ is the degree of $G$ in $P_2$, and $r$ is as in (\ref{eqn:leastmonomials}), then $|\mathcal{Y}_2|\le D_2r=:R=R(\mathcal{Z})$. \item Suppose we fix some $p_2\notin\mathcal{Y}_2$. Then there is a finite set $\mathcal{Y}_3=\mathcal{Y}_3(p_2)\subseteq\overline{K}$ of cardinality at most $R$ such that if $p_j\notin\mathcal{Y}_3$, then $G(p_2,p_j,P_4,P_5)$ is nonconstant in $P_4$ and $P_5$.\item Suppose we fix $p_2\notin\mathcal{Y}_2$, and subsequently fix some $p_3\notin\mathcal{Y}_3=\mathcal{Y}_3(p_2)$. Then there is a finite set $\mathcal{Y}_4=\mathcal{Y}_4(p_2,p_3)\subseteq\overline{K}$ of cardinality at most $R$ such that if $p_j\notin\mathcal{Y}_4$, then $G(p_2,p_3,p_j,P_5)$ is nonconstant (i.e., does not lie in $K$). \item Suppose we fix $p_2\notin\mathcal{Y}_2$, and subsequently fix $p_3\notin\mathcal{Y}_3=\mathcal{Y}_3(p_2)$, and finally fix some $p_4\notin\mathcal{Y}_4(p_2,p_3)$. Then there is a finite set $\mathcal{Y}_5=\mathcal{Y}_5(p_2,p_3,p_4)\subseteq\overline{K}$ of cardinality at most $R$ such that if $p_j\notin\mathcal{Y}_5$, then $G(p_2,p_3,p_4,p_j)\ne0$.\end{enumerate} 

On the other hand, Corollary \ref{cor:adgoodbars} states that if $\mathcal{T}_1=\{p_j-p_1:p_j\in K\text{ is preperiodic under }f\}$, with $|\mathcal{T}_1|=n$ and $n\gg_\epsilon1$, then $(1-\epsilon)n$ elements of $\mathcal{T}_1$ are adelically good. Let $\Sigma$ be the set of bad places of $K$ not dividing $2$ where the set of preperiodic points of $f$ in $K$ is $\epsilon$-equidistributed; assume first that $\Sigma$ is nonempty, and let $1>\delta>0$ be such that $\Sigma$ is a $\delta$-slice of places $v\in M_K^0\setminus\{v\mid2\}$. For $p_j-p_1\in\mathcal{T}_1$, write $a_j=p_j-p_1$. 

\textbf{Claim}: When $n\gg_\epsilon1$, there is an $\epsilon_2=\epsilon_2(\epsilon)>0$ such that $\epsilon_2n$ choices of $j$ satisfy \begin{equation}\label{ineq:longsides}\begin{split}\sum_{v\in\Sigma}r_v\lambda_v(a_j)&\ge\fr{1}{2}\left(\fr{1}{2}-\epsilon\right)\sum_{v\in\Sigma}r_v\lambda_v(c)\\&=\fr{1}{4}\left(1-2\epsilon\right)\sum_{v\in\Sigma}r_v\lambda_v(c).\end{split}\end{equation} 

Proof of claim: Suppose $\epsilon_3>0$ is such that there are strictly fewer than $\epsilon_3n$ elements of $\mathcal{T}_1$ satisfying (\ref{ineq:longsides}). By the definition of $\epsilon$-equidistribution, we have for $v\in\Sigma$ that \[\fr{1}{n}\sum_jr_v\lambda_v(a_j)\ge\left(\fr{1-\epsilon}{4}\right)r_v\lambda_v(c).\] Thus \begin{equation}\label{ineq:avglb}\sum_{v\in\Sigma}\left(\fr{1}{n}\sum_jr_v\lambda_v(a_j)\right)\ge\left(\fr{1-\epsilon}{4}\right)\sum_{v\in\Sigma}r_v\lambda_v(c).\end{equation} Then, since $\lambda_v(a_j)\le\fr{1}{2}\lambda_v(c)$, the definition of $\epsilon_3$ and (\ref{ineq:avglb}) give \begin{equation*}\begin{split}(1-\epsilon_3)\left(\fr{1}{4}-\fr{1}{2}\epsilon\right)\sum_{v\in\Sigma}r_v\lambda_v(c)+\epsilon_3\sum_{v\in\Sigma}\fr{1}{2}r_v\lambda_v(c)&>\fr{1}{n}\sum_{v\in\Sigma}\sum_jr_v\lambda_v(a_{j})\\&\ge\left(\fr{1-\epsilon}{4}\right)\sum_{v\in\Sigma}r_v\lambda_v(c).\end{split}\end{equation*} As $\Sigma$ is nonempty by assumption, this inequality is contradicted if $\epsilon_3$ is sufficiently small. This proves the claim. From Lemma \ref{lem:shape}, we know that for $v\nmid d$ a place of bad reduction, either $\lambda_v(a_j)=0$ or $\lambda_v(a_j)=\fr{1}{2}\lambda_v(c)$. Consequently, it follows from the claim that for each of these $\epsilon_2n$ choices of $j$, there is a $(\fr{1}{2}-\epsilon)$-slice of places $v\in\Sigma$ such that $|a_j|_v=|c|_v^{1/2}$. 

Moreover, we have seen in (i) that for all but finitely many $p_j\in K$, $G(p_j,P_3,P_4,P_5)$ is nonconstant in each of $P_3,P_4,P_5$. Therefore by Corollary \ref{cor:adgoodbars}, if $n\gg_{\mathcal{Z},\epsilon,\epsilon_2}1$, then there is a $p_2\in K$ preperiodic under $f$ such that $p_2-p_1$ is adelically good, $G(p_2,P_3,P_4,P_5)$ is nonconstant in each of $P_3,P_4,P_5$, and $|p_2-p_1|_v=|c|_v^{1/2}$ for a $(\fr{1}{2}-\epsilon)$-slice $\Sigma_2$ of places $v\in\Sigma$. Fix such a $p_2$ and $\Sigma_2$.

In a similar manner, $\epsilon_2n$ elements $a_j\in\mathcal{T}_1$ satisfy $|a_j|_v=|c|_v^{1/2}$ for a $(1/2-\epsilon)$-slice of places $v\in\Sigma_2$ (and hence a $(1/2-\epsilon)^2$-slice of places $v\in\Sigma$). Hence by (ii), if $n\gg_{\mathcal{Z},\epsilon,\epsilon_2}1$, we can fix a $p_3\in K$ preperiodic under $f$ so that $p_3-p_1$ is adelically good, $G(p_2,p_3,P_4,P_5)$ is nonconstant in $P_4$ and $P_5$, and $|p_3-p_1|_v=|c|_v^{1/2}$ for a $(1/2-\epsilon)$-slice $\Sigma_3$ of places $v\in\Sigma_2$.

Let \[\mathcal{T}_{-1}=\{p_j+p_1:p_j\in K\text{ is preperiodic under }f\}=-\mathcal{T}_1,\] and let \[\mathcal{T}_2=\{p_j-p_2:p_j\in K\text{ is preperiodic under }f\}.\] We have established that if $n\gg_\epsilon1$, there are $\epsilon_2n$ elements $p_j+p_1\in\mathcal{T}_{-1}$ such that $|p_j+p_1|_v=|c|_v^{1/2}$ for a $\fr{1-\epsilon}{2}$-slice $\Sigma_4$ of places $v\in\Sigma_3$. Additionally, there are $(1-\epsilon)n$ elements of $\mathcal{T}_{-1}$ that are adelically good, and $(1-\epsilon)n$ elements of $\mathcal{T}_2$ that are adelically good. Combining these two statements, it follows that if we assume $\epsilon<\fr{\epsilon_2}{4}$ (which we can do while leaving $\epsilon_2$ unchanged), then for $n\gg_{\epsilon}1$, there are at least $\fr{\epsilon_2}{2}n$ elements $p_j+p_1\in\mathcal{T}_{-1}$ such that both: \begin{itemize} \item $p_j-p_2$ and $p_j+p_1$ are adelically good \item $|p_j+p_1|_v=|c|_v^{1/2}$ for a $\fr{1-\epsilon}{2}$-slice $\Sigma_4$ of places $v\in\Sigma_3$. \end{itemize} Thus by (iii), if $n\gg_{\mathcal{Z},\epsilon,\epsilon_2}1$, we can fix a $p_j=p_4\in K$ having these two properties, such that $G(p_2,p_3,p_4,P_5)$ is nonconstant in $P_5$. Similarly by (iv), if $n\gg_{\mathcal{Z},\epsilon,\epsilon_2}1$, then there is a $p_j=p_5\in K$ preperiodic under $f$ such that $p_5-p_3$ and $p_5+p_1$ are both adelically good, $|p_5+p_1|_v=|c|_v^{1/2}$ for a $(1/2-\epsilon)$-slice of places $v\in\Sigma_4$, and $G(p_2,p_3,p_4,p_5)\ne0$. 

Let $w=(x_0,x_1,x_2,x_3,x_4,x_5)$ be the hexagon defined by $p_2,p_3,p_4,p_5$. By the construction of $w$, we have $w\in(K^*)^6$, and \[P=(x_0,x_1,x_2,x_3,x_4,x_5)\in\mathcal{H}\setminus\mathcal{Z}.\] Note also that if $v\in M_K^0\setminus\{v\mid2\}$ is such that $|p_2-p_1|_v=|p_3-p_1|_v=|p_4+p_1|_v=|p_5+p_1|_v$, then since $|p_1-(-p_1)|_v=|2p_1|_v=|c|_v^{1/2}$, we have \[|p_5-p_3|_v=|p_4-p_2|_v=|c|_v^{1/2}.\] Therefore \begin{equation}\label{eqn:allsidesequal}|x_0|_v=|x_1|_v=|x_2|_v=|x_3|_v=|x_4|_v=|x_5|_v=|c|_v^{1/2}\end{equation} for a $(\fr{1}{2}-\epsilon)^4$-slice of places $v\in\Sigma$, and $x_0,x_1,x_2,x_3,x_4,x_5$ are adelically good.

We will show that this is a contradiction for all $\delta$ sufficiently close to $1$ and all $c\in K$ with $h(c)\gg_{\xi,\epsilon,\delta,K}1$. (Recall that $1>\delta>0$ is defined to be a real number such that $\Sigma$ is a $\delta$-slice of places $v\in M_K^0\setminus\{v\mid2\}$.) Let $S_1$ be the set of places $v\in M_K^0\setminus\{v\mid2\}$ of bad reduction for $f$. Let $S_{1,1}$ be the set of bad places in $M_K^0\setminus\{v\mid2\}$ such that \[|x_0|_v=|x_1|_v=\cdots=|x_5|_v,\] and let $S_{1,2}=S_1\setminus S_{1,1}$. Let $S_2$ be the set of places $v\in M_K^0\setminus\{v\mid 2\}$ of good reduction for $f$. Let \[\text{rad}_v(P)=
\begin{cases}
N_\mathfrak{p} & \text{if }v=\mathfrak{p}\in\text{rad}(P)\\
0 & \text{otherwise}.
\end{cases}\] At all $v\nmid 2$, we have $|p_1-(-p_1)|_v=|c|_v^{1/2}$. Then by (\ref{eqn:allsidesequal}), \[\sum_{v\in S_{1,1}}r_v\log\max\{|x_0|_v,\dots,|x_5|_v\}-\textup{rad}_v(P)\ge\left(\fr{1}{2}-\epsilon\right)^4\sum_{v\in\Sigma}\fr{1}{2}r_v\lambda_v(c).\] Since we trivially have \[\sum_{v\in S_{1,2}}r_v\log\max\{|x_0|_v,\dots,|x_5|_v\}-\textup{rad}_v(P)\ge0,\] this yields \begin{equation*}\begin{split}\sum_{v\in S_1}r_v\log\max\{|x_0|_v,\dots,|x_5|_v\}-\textup{rad}_v(P)&\ge\left(\fr{1}{2}-\epsilon\right)^4\sum_{v\in\Sigma}\fr{1}{2}r_v\lambda_v(c)\\&\ge\fr{1}{2}\left(\fr{1}{2}-\epsilon\right)^4\delta\sum_{v\in M_K^0\setminus\{v\mid 2\}}r_v\lambda_v(c)\\&\ge\fr{1}{2}(1-\xi)\left(\fr{1}{2}-\epsilon\right)^4\delta h(c).\end{split}\end{equation*} On the other hand, \begin{equation}{\label{ineq:.04}}\sum_{v\in S_2}r_v\log\max\{|x_0|_v,\dots,|x_5|_v\}-\textup{rad}_v(P)\ge-\fr{1}{50}h(c).\end{equation} Indeed, since $x_0,x_1,\dots,x_5$ are adelically good, we have \[\sum_{v\in S_2}r_v\log\max\{|x_0|_v,\dots,|x_5|_v\}\ge-6\fr{1}{600}h(c),\] and so a fortiori, \[\sum_{v\in S_2}\textup{rad}_v(P)\le\fr{1}{100}h(c),\] yielding (\ref{ineq:.04}). Thus \[\sum_{v\in M_K^0\setminus\{v\mid2\}} r_v\log\max\{|x_0|_v,\dots,|x_5|_v\}-\textup{rad}_v(P)\ge-\fr{1}{50}h(c)+\fr{1}{2}(1-\xi)(1/2-\epsilon)^4\delta h(c).\] By (\ref{eqn:archplaces}), and the fact that for a number field $K$, \[\sum_{v=\mathfrak{p}\mid 2} N_\mathfrak{p}=\sum_{v=\mathfrak{p}\mid2}\fr{\log(\#k_\mathfrak{p})}{[K:\QQ]}\le\log 2,\] one obtains \[\sum_{v\in M_K^\infty\cup\{v\mid2\}} r_v\log\max\{|x_0|_v,\dots,|x_5|_v\}-\text{rad}_v(P)\ge-\fr{1}{800}h(c)-\log2.\] We conclude that \[\sum_{v\in M_K}r_v\log\max\{|x_0|_v,\dots,|x_5|_v\}-\textup{rad}_v(P)\ge\left(-\fr{1}{50}+\fr{\delta}{2}(1-\xi)\left(\fr{1}{2}-\epsilon\right)^4-\fr{1}{800}\right)h(c)-\log2.\] For all sufficiently small choices of $\xi,\epsilon$ along with any $\delta$ sufficiently near $1$, the quantity \begin{equation}\label{eqn:quantity}\left(-\fr{1}{50}+\fr{1}{2}\delta(1-\xi)\left(\fr{1}{2}-\epsilon\right)^4-\fr{1}{800}\right)h(c)\end{equation} is a positive proportion of $h(c)$. When $h(c)\gg_{\delta,\xi,\epsilon,K}1$, this contradicts Conjecture \ref{conj:nconj}. By Corollary \ref{cor:bucketequid}, we can assume $\delta$ is arbitrarily close to $1$, and $\epsilon$ is arbitrarily close to $0$, provided that $n\gg_{\delta,\epsilon}1$ and $\xi=\xi(\delta,\epsilon)$ is sufficiently small. Therefore there is an absolute constant $\Xi$ such that whenever \begin{equation}\label{eqn:Xi}\sum_{v\in M_K^0\setminus\{v\mid 2\}}r_v\lambda_v(c)\ge(1-\Xi)h(c)\end{equation} and $h(c)\gg_K1$, there are at most $B_1=B_1(K)$ preperiodic points of $f$ in $K$. On the other hand, Proposition \ref{prop:xi} implies that there is a $B_2=B_2(K)$ such that when $f$ fails to satisfy (\ref{eqn:Xi}), there are at most $B_2$ preperiodic points of $f$ contained in $K$. The proof is thus complete when $h(c)\gg_K1$ and $\Sigma$ is nonempty. To handle the remaining values of $c\in K$ (still assuming $\Sigma$ is nonempty), we apply Proposition \ref{prop:xi}, recalling that $h(c)>0$ by assumption if $K$ is a function field.

Finally, if $\Sigma$ is empty, then Corollary \ref{cor:bucketequid} and Proposition \ref{prop:xi} together imply that either $f$ has at most $B_3$ preperiodic points in $K$, where $B_3$ is an absolute constant, or that $K$ is a function field and $h(c)=0$. The latter has been excluded by hypothesis. Taking $B=\max\{B_1,B_2,B_3\}$ completes the proof of Theorem \ref{thm:UBCunicrit} when $d=2$.
\end{proof}

\section{Proof of Theorem \ref{thm:UBCunicrit} when $d=3,4$}\label{section:cubicquartic}

In this section, we prove Theorem \ref{thm:UBCunicrit} when $d=3$ or $4$. The proof is in most respects parallel to that of the quadratic case.

We fix throughout a degree $3\le d\le 4$ and a primitive $d$-th root of unity $\zeta_d$. Let $K$ be a number field or a one-variable function field of characteristic zero, and let $f(z)=z^d+c\in K[z]$. Assume without loss of generality that $\zeta_d\in K$. Instead of using $6$-tuples, as was done in the quadratic case, we will use $4$-tuples, as illustrated in Figure \ref{figure:quadrilateral}.

\begin{definition*} A \emph{quadrilateral with basepoint $p_1$} is an element \[(p_2-p_1,\zeta_dp_1-p_2,p_3-\zeta_dp_1,p_1-p_3)\in K^4\] for preperiodic points $p_1,p_2,p_3\in K$.\end{definition*}

\begin{figure}[H]
	\begin{tikzpicture}[auto,scale=0.15,>=stealth]
	
	\draw[->, ultra thick, black](17.3,20) -- (0,0);
	\draw[->, ultra thick, black](0,40) -- (17.3,20);
	\draw[->, ultra thick, black](0,0) -- (-17.3,20);
	\draw[->, ultra thick, black](-17.3,20) -- (0,40);
	
	\draw (-10,9) node {$x_0$};
	\draw (20,18) node {$p_3$};
	\draw (0,-2) node {$p_1$};
	\draw (-19,18) node {$p_2$};
	\draw (10,9) node {$x_1$};
	\draw (-10,32) node {$x_2$};
	\draw (10,32) node {$x_3$};
	\draw (0,42) node {$\zeta_d p_1$};
	\node at (0,0) {\pgfuseplotmark{*}};
	\node at (17.3,20) {\pgfuseplotmark{*}};
	\node at (-17.3,20) {\pgfuseplotmark{*}};
	\node at (0,40) {\pgfuseplotmark{*}};
	\end{tikzpicture}
	\caption{A quadrilateral with basepoint $p_1$.}
	\label{figure:quadrilateral}
\end{figure}
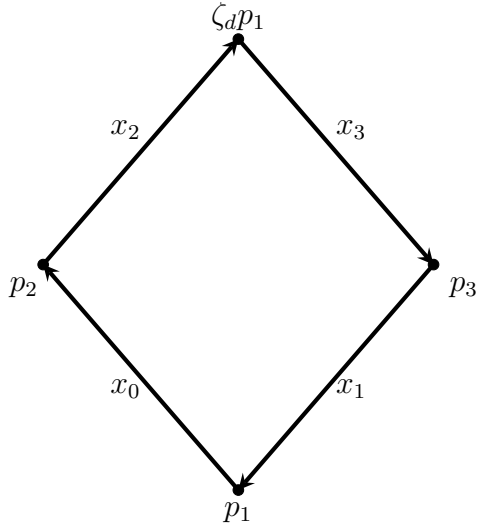

\begin{proof}[Proof of Theorem \ref{thm:UBCunicrit} when $d=3,4$] Let $\epsilon>0$, and let $\mathcal{Z}=\mathcal{Z}(K,\epsilon,4)$ be as in Conjecture \ref{conj:nconj}. Suppose $p_1\in K$ is a preperiodic point of $f$. By enlarging $\mathcal{Z}$ if necessary, we can assume $\mathcal{Z}$ is defined by a single homogeneous polynomial \[G(X_0,\dots,X_3)\in \overline{K}[X_0,\dots,X_3].\] For any quadrilateral $w=(x_0,x_1,x_2,x_3)$ with basepoint $p_1$, we have $x_0+x_1+x_2+x_3=0$. Thus, letting $P=(x_0,x_1,x_2,x_3)\in\mathbb{P}^3(K)$, we have $P\in\mathcal{H}$, where $\mathcal{H}$ is the hyperplane in $\mathbb{P}^3(K)$ given by $X_0+\dots+X_3=0$. We would like to produce a quadrilateral $w=(x_0,x_1,x_2,x_3)$ such that the associated point $P\in\mathbb{P}^3(K)$ satisfies $P\in\mathcal{H}\setminus\mathcal{Z}$. Making the substitutions $X_0=P_2-p_1,X_1=P_3-p_1,X_2=\zeta_dp_1-P_2,X_3=p_1-P_3$, we can write \[G(X_0,\dots,X_3)=G(P_2,P_3)=\sum_{l=1}^ra_lP_2^{i_{l,2}}P_3^{i_{l,3}}\in\overline{K}[P_2,P_3].\] For $m_3\in\mathbb{N}$, write \[G_{m_3}=\sum_ib_iP_2^{m_{2,i}}P_3^{m_3},\] and $m_{2,i}\ne m_{2,j}$ for all $i\ne j$, so that $G=\sum_{m_3\in\mathbb{N}}G_{m_3}$. A specialization $G(p_2,P_3)$ is nonconstant so long as $G_{m_3}\ne 0$ for some $m_3>0$. Thus: \begin{enumerate} \item There is a finite set $\mathcal{Y}_2\subseteq\overline{K}$ such that $G(p_2,P_3)$ is nonconstant for any $p_2\notin\mathcal{Y}_2$. The cardinality of $\mathcal{Y}_2$ is at most some $R$ depending only on $\mathcal{Z}$. \item Suppose we fix $p_2\notin\mathcal{Y}_2$. Then there is a set $\mathcal{Y}_3=\mathcal{Y}_3(p_2)\subseteq\overline{K}$, with $|\mathcal{Y}_3|\le R$, such that for any $p_3\notin\mathcal{Y}_3$, $G(p_2,p_3)\ne0$.\end{enumerate}

Let $T$ be the set of $K$-rational preperiodic points of $f$, with $|T|=n$. For $v\in M_K^0\setminus\{v\mid d\}$ a place of bad reduction for $f$, and $p_j\in K$ preperiodic under $f$, let \begin{equation*}\chi_v(p_j)=\begin{cases}r_v\lambda_v(c) & \text{ if }|p_j-p_1|_v=|p_j-\zeta_d p_1|_v=|c|_v^{1/d}\\ 0 & \text{ otherwise.}\end{cases}\end{equation*} Let $\Sigma$ be the set of places $v\in M_K^0\setminus\{v\mid d\}$ of bad reduction for $f$ such that $T$ is $\epsilon$-equidistributed at $v$. Assume $\Sigma$ is nonempty, and let $1>\delta>0$ be such that $\Sigma$ is a $\delta$-slice of places $v\in M_K^0\setminus\{v\mid d\}$. Let $\xi=\xi(\epsilon)$, $M=M(\epsilon)$, and $N=N(\epsilon)$ be as in Corollary \ref{cor:adgoodbars}. Suppose $n\ge N$, $h(c)\ge M$, and \[\sum_{v\in M_K^0\setminus\{v\mid d\}}r_v\lambda_v(c)\ge(1-\xi)h(c).\]

\textbf{Claim 1}: There is an $\epsilon_2=\epsilon_2(\epsilon)>0$ such that $\epsilon_2n$ elements $p_j\in T$ satisfy \begin{equation}\label{ineq:longlong}\sum_{v\in\Sigma}\chi_v(p_j)\ge\fr{d-2}{d}(1-2\epsilon)\sum_{v\in\Sigma}r_v\lambda_v(c).\end{equation} 

Proof of Claim 1: At any $v\in\Sigma$, we have by Lemma \ref{lem:shape} and the definition of $\epsilon$-equidistribution that \[\fr{1}{n}\sum_{p_j\in T}\chi_v(p_j)\ge\fr{d-2}{d}(1-\epsilon)r_v\lambda_v(c).\] Thus \begin{equation}\label{eqn:avglb}\fr{1}{n}\sum_{v\in\Sigma}\sum_{p_j\in T}\chi_v(p_j)\ge\fr{d-2}{d}(1-\epsilon)\sum_{v\in\Sigma}r_v\lambda_v(c).\end{equation} Suppose $\epsilon_3>0$ is such that there are strictly fewer than $\epsilon_3n$ elements of $T$ satisfying (\ref{ineq:longlong}). Then \[\fr{1}{n}\sum_{v\in\Sigma}\sum_j\chi_v(p_j)<\epsilon_3\cdot \sum_{v\in\Sigma}r_v\lambda_v(c)+(1-\epsilon_3)\sum_{v\in\Sigma}\frac{d-2}{d}(1-2\epsilon)r_v\lambda_v(c).\] As $\epsilon_3\to0$, the right-hand side approaches \begin{equation*}\sum_{v\in\Sigma}\dfrac{d-2}{d}(1-2\epsilon)r_v\lambda_v(c)<\sum_{v\in\Sigma}\dfrac{d-2}{d}(1-\epsilon)r_v\lambda_v(c),\end{equation*} contradicting (\ref{eqn:avglb}) since we have assumed $\Sigma$ is nonempty. This proves Claim 1. Since, by Lemma \ref{lem:shape}, $\chi_v(p_j)$ is either $0$ or $r_v\lambda_v(c)$, it follows that for the $\epsilon_2 n$ elements of $T$ satisfying (\ref{ineq:longlong}), there is a $\fr{d-2}{d}(1-2\epsilon)$-slice of places $v\in\Sigma$ such that \[|p_j-p_1|_v=|p_j-\zeta_dp_1|_v=|c|_v^{1/d}.\]

By Corollary \ref{cor:adgoodbars}, for $n\gg_\epsilon1$, there are at least $(1-\epsilon)n$ elements $p_j\in T$ such that $p_j-p_1$ and $p_j-\zeta_dp_1$ are adelically good. Adjusting $\epsilon$ if needed so that $\epsilon<\fr{\epsilon_2}{4}$ (which we can do while leaving $\epsilon_2$ unchanged), it follows that at least $\fr{\epsilon_2n}{2}$ elements $p_j\in T$ are such that: \begin{itemize} \item $p_j-p_1$ and $p_j-\zeta_dp_1$ are adelically good \item $|p_j-p_1|_v=|p_j-\zeta_dp_1|_v=|c|_v^{1/d}$ for a $\fr{d-2}{d}(1-2\epsilon)$-slice of places $v\in\Sigma$ \end{itemize} Thus if $n\gg_{\mathcal{Z},\epsilon,\epsilon_2}1$, there is a $p_j\in T$ having both of these properties, such that $G(p_j,P_3)$ is nonconstant in $P_3$. Fix such a $p_j=:p_2$. Let $\Sigma_{p_2}$ be a $\fr{d-2}{d}(1-2\epsilon)$-slice of places $v\in\Sigma$ such that $|p_2-p_1|_v=|p_2-\zeta_dp_1|_v=|c|_v^{1/d}$. 

\textbf{Claim 2}: There is an $\epsilon_2=\epsilon_2(\epsilon)>0$ such that $\epsilon_2 n$ elements $p_j\in T$ satisfy \[\sum_{v\in\Sigma_{p_2}}\chi_v(p_j)\ge\fr{d-2}{d}(1-2\epsilon)\sum_{v\in\Sigma_{p_2}}r_v\lambda_v(c)\]

Proof of Claim 2: The proof is similar to that of Claim 1, with $\Sigma_{p_2}$ replacing $\Sigma$. 

Therefore, assuming $n\gg_{\mathcal{Z},\epsilon,\epsilon_2}1$, there is a $p_j\in T$ with the property that $p_j$ satisfies \[|p_j-p_1|_v=|p_j-\zeta_dp_1|_v=|c|_v^{1/d}\] for a $\fr{d-2}{d}(1-2\epsilon)$-slice of places $v\in\Sigma_{p_2}$, where $p_j-p_1$ and $p_j-\zeta_dp_1$ are adelically good, and $G(p_2,p_j)\ne 0$. Fix such a $p_j=:p_3$, and let \[w=(x_0,x_1,x_2,x_3)=(p_2-p_1,p_1-p_3,\zeta_dp_1-p_2,p_3-\zeta_dp_1).\] Then \[|x_0|_v=|x_1|_v=|x_2|_v=|x_3|_v\] for a $(\fr{d-2}{d}(1-2\epsilon))^2$-slice of places $v\in\Sigma$, and $x_0,x_1,x_2,x_3$ are adelically good. By the construction of $w$, one sees that $P=(x_0,x_1,x_2,x_3)\in\mathcal{H}\setminus\mathcal{Z}$.

We will show that this is a contradiction for all $\delta$ sufficiently close to $1$ and all $c\in K$ with $h(c)\gg_{\xi,\epsilon,\delta,K}1$. Let $S_1$ be the set of places $v\in M_K^0\setminus\{v\mid d\}$ of bad reduction for $f$. Let $S_{1,1}$ be the set of bad places in $M_K^0\setminus\{v\mid d\}$ such that $|x_0|_v=|x_1|_v=|x_2|_v=|x_3|_v$, and let $S_{1,2}=S_1\setminus S_{1,1}$. Let $S_2$ be the set of places $v\in M_K^0\setminus\{v\mid d\}$ of good reduction for $f$. Let \[\text{rad}_v(P)=
\begin{cases}
N_\mathfrak{p} & \text{if }v=\mathfrak{p}\in\text{rad}(P)\\
0 & \text{otherwise}.
\end{cases}\]

As $d=3$ or $d=4$, we have $(\fr{d-2}{d})^2\ge\fr{1}{9}$. At all $v\nmid d$, we have $|p_1-(\zeta_dp_1)|_v=|c|_v^{1/d}$. Thus \begin{equation*}\begin{split}\sum_{v\in S_{1,1}}r_v\log\max\{|x_0|_v,|x_1|_v,|x_2|_v,|x_3|_v\}-\textup{rad}_v(P)&\ge\left(\fr{d-2}{d}(1-2\epsilon)\right)^2\sum_{v\in\Sigma}\fr{1}{d}r_v\lambda_v(c)\\&\ge\fr{1}{9}(1-2\epsilon)^2\sum_{v\in\Sigma}\fr{1}{d}r_v\lambda_v(c).\end{split}\end{equation*} Since it holds trivially that \[\sum_{v\in S_{1,2}}r_v\log\max\{|x_0|_v,|x_1|_v,|x_2|_v,|x_3|_v\}-\textup{rad}_v(P)\ge0,\] this yields \begin{equation*}\begin{split}\sum_{v\in S_1}r_v\log\max\{|x_0|_v,|x_1|_v,|x_2|_v,|x_3|_v\}-\textup{rad}_v(P)&\ge\fr{1}{9}(1-2\epsilon)^2\sum_{v\in \Sigma}\fr{1}{d}r_v\lambda_v(c)\\&\ge\fr{1}{9}(1-2\epsilon)^2\delta\sum_{v\in M_K^0\setminus\{v\mid d\}}\fr{1}{d}r_v\lambda_v(c)\\&\ge\fr{\delta}{9d}(1-\xi)(1-2\epsilon)^2h(c).\end{split}\end{equation*} On the other hand, \begin{equation}{\label{ineq:8/600}}\sum_{v\in S_2}r_v\log\max\{|x_0|_v,|x_1|_v,|x_2|_v,|x_3|_v\}-\textup{rad}_v(P)\ge-\fr{8}{600}h(c).\end{equation} Indeed, since $x_0,x_1,x_2,x_3$ are adelically good, we have \[\sum_{v\in S_2}r_v\log\max\{|x_0|_v,|x_1|_v,|x_2|_v,|x_3|_v\}\ge-4\fr{1}{600}h(c),\] and so a fortiori, \[\sum_{v\in S_2}\textup{rad}_v(P)\le\fr{4}{600}h(c),\] yielding (\ref{ineq:8/600}). Hence \[\sum_{v\in M_K^0\setminus\{v\mid d\}} r_v\log\max\{|x_0|_v,|x_1|_v,|x_2|_v,|x_3|_v\}-\textup{rad}_v(P)\ge-\fr{8}{600}h(c)+\fr{\delta}{9d}(1-\xi)(1-2\epsilon)^2h(c).\] By (\ref{eqn:archplaces}), and the fact that for a number field $K$, \[\sum_{v=\mathfrak{p}\mid d} N_\mathfrak{p}=\sum_{v=\mathfrak{p}\mid d}\fr{\log(\#k_\mathfrak{p})}{[K:\QQ]}\le\log d\] one obtains \[\sum_{v\in M_K^\infty\cup\{v\mid d\}} r_v\log\max\{|x_0|_v,|x_1|_v,|x_2|_v,|x_3|_v\}-\text{rad}_v(P)\ge-\fr{1}{800}h(c)-\log d.\] We conclude that \begin{equation*}\sum_{v\in M_K}r_v\log\max\{|x_0|_v,\dots,|x_3|_v\}-\textup{rad}_v(P)\ge\left(-\fr{8}{600}+\fr{\delta}{9d}(1-\xi)(1-2\epsilon)^2-\fr{1}{800}\right)h(c)-\log d.\end{equation*} For all sufficiently small choices of $\xi$ and $\epsilon$, and for any $\delta$ sufficiently close to $1$, the quantity \[\left(-\fr{8}{600}+\fr{\delta}{9d}(1-\xi)(1-2\epsilon)^2-\fr{1}{800}\right)h(c)\] is a positive proportion of $h(c)$. Fixing such choices of $\xi,\epsilon$, and $\delta$, this contradicts Conjecture \ref{conj:nconj} when $h(c)\gg_K1$. The rest of the proof proceeds exactly as in the quadratic case, from the line after (\ref{eqn:quantity}) to the end of \S\ref{section:d2}.\end{proof}

\section{Proof of Theorem \ref{thm:UBCunicrit} when $d\ge5$}{\label{section:d5}}

As noted in the introduction, the ideas used in proving Theorem \ref{thm:UBCunicrit} when $2\le d\le 4$ may be adapted to the case $d\ge5$, using triangles instead of hexagons or quadrilaterals. One thus uses the case $n=3$ of Conjecture \ref{conj:nconj}, which is the usual $abc$-conjecture. (One difference is that the triangles in question have no basepoint.) 
To avoid repetition and introduce alternative approaches, however, we choose to use a separate technique, one which works specifically when $d\ge5$. This method also uses the $abc$-conjecture. In contrast to our previous method, however, it relies purely on algebraic arguments.

Fix $K$ a number field or a one-variable function field of characteristic zero, and let $f(z)=z^d+c\in K[z]$. For $\alpha\in K^*$, and $S$ the set of places $\mathfrak{p}\in M_K^0$ such that $v_\mathfrak{p}(\alpha)>0$, let \[\textup{rad}(\alpha)=\sum_{\mathfrak{p}\in S}N_\mathfrak{p}.\] Lemmas \ref{lem:localhtbound}, \ref{lem:htofdiff}, and \ref{lem:coprimality} will be used to prove Proposition \ref{prop:lowerhtbound}, the key ingredient in the proof of Theorem \ref{thm:UBCunicrit} when $d\ge5$. \begin{lem}\cite[Lemmas 3 and 6]{Ingram:mincanht}{\label{lem:localhtbound}} Let $v\in M_K$, and let $\alpha\in\CC_v$. If $\hat{\lambda}_v(\alpha)=0$, then $\alpha\in D(0,2_v|c|_v^{1/d})$.\end{lem}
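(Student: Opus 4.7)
The plan is to prove the contrapositive: assume $|\alpha|_v > 2_v|c|_v^{1/d}$ and deduce that the forward orbit $\{f^n(\alpha)\}$ escapes to infinity quickly enough that $\hat{\lambda}_v(\alpha) > 0$. I would split into the nonarchimedean and archimedean cases, since the mechanism driving the escape is slightly different in each.

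For the nonarchimedean case, $2_v = 1$, so the hypothesis reads $|\alpha|_v > |c|_v^{1/d}$, equivalently $|\alpha^d|_v > |c|_v$. The strict ultrametric inequality then gives
\[
|f(\alpha)|_v = |\alpha^d + c|_v = |\alpha|_v^d.
\]
Once $|\alpha|_v$ also exceeds $1$, we have $|f(\alpha)|_v = |\alpha|_v^d > |\alpha|_v > |c|_v^{1/d}$, so the hypothesis propagates under iteration. A straightforward induction yields $|f^n(\alpha)|_v = |\alpha|_v^{d^n}$, hence
\[
\hat{\lambda}_v(\alpha) = \lim_{n\to\infty}\tfrac{1}{d^n}\log\max\{1,|\alpha|_v^{d^n}\} = \log|\alpha|_v > 0,
\]
contradicting $\hat{\lambda}_v(\alpha) = 0$.

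For the archimedean case, $2_v = 2$, so the hypothesis $|\alpha|_v > 2|c|_v^{1/d}$ gives $|\alpha|_v^d \ge 2^d|c|_v$. Then the reverse triangle inequality yields
\[
|f(\alpha)|_v \ge |\alpha|_v^d - |c|_v \ge |\alpha|_v^d\bigl(1 - 2^{-d}\bigr) \ge \tfrac{1}{2}|\alpha|_v^d.
\]
A short computation shows that this bound propagates: $|f(\alpha)|_v > 2|c|_v^{1/d}$, and provided $|\alpha|_v$ is at least some absolute constant (which can always be arranged by first replacing $\alpha$ by $f^{k}(\alpha)$ for suitably chosen $k$), one also obtains $|f(\alpha)|_v > |\alpha|_v$. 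Iterating, $\log|f^n(\alpha)|_v \ge d^n\log|\alpha|_v - C_d$ with a constant $C_d$ depending only on $d$, and dividing by $d^n$ and passing to the limit produces $\hat{\lambda}_v(\alpha) \ge \log|\alpha|_v > 0$.

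The main obstacle is the archimedean case, where, unlike the ultrametric setting, one only has an inequality $|f(\alpha)|_v \gtrsim |\alpha|_v^d$ rather than equality. The propagation of the hypothesis $|\cdot|_v > 2|c|_v^{1/d}$ under $f$ must be checked carefully, and one must separately rule out the scenario that $|\alpha|_v$ sits just slightly above the threshold while $|c|_v^{1/d}$ is much larger than $1$; applying $f$ once or twice to push $|f^k(\alpha)|_v$ safely away from the boundary, and then invoking the geometric escape estimate, disposes of this edge case.
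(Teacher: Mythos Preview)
The paper does not supply its own proof of this lemma; it is quoted directly from Ingram's paper (Lemmas~3 and~6 of \cite{Ingram:mincanht}) without argument. So there is nothing to compare against, and your contrapositive-plus-escape argument is exactly the standard one that Ingram uses.

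Your nonarchimedean case is correct when $|c|_v\ge 1$: then $|\alpha|_v>|c|_v^{1/d}\ge 1$ is automatic, and the ultrametric equality $|f(\alpha)|_v=|\alpha|_v^d$ propagates to give $\hat\lambda_v(\alpha)=\log|\alpha|_v>0$. But your phrase ``once $|\alpha|_v$ also exceeds $1$'' papers over the case $|c|_v<1$, where $|\alpha|_v$ need not exceed $1$ and the orbit can remain bounded (take $|c|_v<1$ and $|\alpha|_v=1$: then $|f^n(\alpha)|_v\le 1$ for all $n$, so $\hat\lambda_v(\alpha)=0$ even though $|\alpha|_v>|c|_v^{1/d}$). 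This is not really your fault: the lemma as literally stated is only correct with the radius read as $2_v\max\{1,|c|_v\}^{1/d}$, which is how Ingram formulates it and how the paper actually uses it (the applications, e.g.\ Lemma~\ref{lem:htofdiff}, only need $\lambda_v(\alpha)\le\tfrac{1}{d}\lambda_v(c)+\log 2_v$, which is vacuous when $|c|_v\le 1$). You should state this convention explicitly and then your nonarchimedean argument is complete.

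Your archimedean sketch is fine in outline. The propagation step $|f(\alpha)|_v>2|c|_v^{1/d}$ and the growth $|f(\alpha)|_v>|\alpha|_v$ both follow directly from $|\alpha|_v>2\max\{1,|c|_v^{1/d}\}$ and $(1-2^{-d})|\alpha|_v^d\ge\tfrac12|\alpha|_v^d$, with no need to pass to an iterate first; the ``edge case'' you worry about does not actually arise once the $\max\{1,\cdot\}$ is inserted, since $|\alpha|_v>2$ already forces $|\alpha|_v^{d-1}>2^{d-1}>2^d/(2^d-1)$.
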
 \begin{lem}{\label{lem:htofdiff}} Let $d\ge2$, and let $p_1,p_2$ be preperiodic points of $f(z)=z^d+c\in K[z]$. Then \[h(p_1-p_2)\le \fr{1}{d}h(c)+\log4.\]
\end{lem}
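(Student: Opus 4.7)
The plan is to bound $\lambda_v(p_1 - p_2)$ at each place $v \in M_K$ and sum. The key tool is Lemma \ref{lem:localhtbound}, combined with the observation that any preperiodic point $\alpha$ of $f$ satisfies $\hat{\lambda}_v(\alpha) = 0$ at every $v$: since the forward orbit of $\alpha$ is finite, the values $\lambda_v(f^n(\alpha))$ are uniformly bounded in $n$, forcing the limit defining $\hat{\lambda}_v(\alpha)$ to vanish. Applying Lemma \ref{lem:localhtbound} to $p_1$ and $p_2$ therefore gives $|p_i|_v \le 2_v|c|_v^{1/d}$ for $i = 1, 2$ at every place $v$.

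At a nonarchimedean place $v$, the ultrametric inequality yields
\[|p_1 - p_2|_v \le \max\{|p_1|_v, |p_2|_v\} \le |c|_v^{1/d}.\]
Splitting into the cases $|c|_v \ge 1$ and $|c|_v < 1$ (in the latter case both sides of the desired bound vanish), one verifies directly that $\lambda_v(p_1 - p_2) = \log\max\{1, |p_1 - p_2|_v\} \le \fr{1}{d}\lambda_v(c)$.

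At an archimedean $v$, the triangle inequality combined with $2_v = 2$ gives $|p_1 - p_2|_v \le |p_1|_v + |p_2|_v \le 4|c|_v^{1/d}$. A similar case analysis on whether $|c|_v \ge 1$ or $|c|_v < 1$ yields $\lambda_v(p_1 - p_2) \le \log 4 + \fr{1}{d}\lambda_v(c)$; in the small-$|c|_v$ case the $\fr{1}{d}\lambda_v(c)$ term is zero but $\log 4$ alone dominates $\log\max\{1,4|c|_v^{1/d}\}$.

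Summing the local bounds weighted by $r_v$, and using $\sum_{v \in M_K^\infty} r_v = 1$ in the number field setting (with $M_K^\infty = \emptyset$ in the function field setting, where the bound is even tighter), yields
\[h(p_1 - p_2) = \sum_{v \in M_K} r_v \lambda_v(p_1 - p_2) \le \fr{1}{d}h(c) + \log 4,\]
as desired. No real obstacle arises; the argument is a routine assembly of local estimates, and the only subtlety is the elementary case split on $|c|_v$ at archimedean places that produces the $\log 4$ constant.
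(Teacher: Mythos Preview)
Your proof is correct and follows essentially the same approach as the paper's: both invoke Lemma \ref{lem:localhtbound} to bound $|p_i|_v$ by $2_v|c|_v^{1/d}$, deduce local bounds $\lambda_v(p_1-p_2)\le\tfrac{1}{d}\lambda_v(c)$ at nonarchimedean places and $\lambda_v(p_1-p_2)\le\tfrac{1}{d}\lambda_v(c)+\log 4$ at archimedean places, and sum. The paper routes the archimedean estimate through $\lambda_v(p_1-p_2)\le\max\{\lambda_v(p_1),\lambda_v(p_2)\}+\log 2$ rather than your direct triangle-inequality bound $|p_1-p_2|_v\le 4|c|_v^{1/d}$, but this is a cosmetic difference only.
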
 \begin{proof} From Lemma \ref{lem:localhtbound}, we have $\lambda_v(p_1),\lambda_v(p_2)\le\fr{1}{d}\lambda_v(c)+\log2$ for all $v\in M_K^\infty$, and $\lambda_v(p_1),\lambda_v(p_2)\le\fr{1}{d}\lambda_v(c)$ for all $v\in M_K^0$. If $v\in M_K^\infty$, then \begin{equation}{\label{eqn:arch}}\lambda_v(p_1-p_2)\le\max\{\lambda_v(p_1),\lambda_v(p_2)\}+\log2\le\fr{1}{d}\lambda_v(c)+2\log2.\end{equation} If $v\in M_K^0$, then \begin{equation}{\label{eqn:nonarch}}\lambda_v(p_1-p_2)\le\max\{\lambda_v(p_1),\lambda_v(p_2)\}\le\fr{1}{d}\lambda_v(c).\end{equation} Summing (\ref{eqn:arch}) and (\ref{eqn:nonarch}) over all places completes the proof.
\end{proof} The \emph{period} of a periodic point $\alpha\in\overline{K}$ of $f$ is the smallest $n$ such that $f^n(\alpha)=\alpha$. A \emph{cycle} of length $n$ is the forward orbit of a periodic point of period $n$. \begin{lem}{\label{lem:coprimality}} Let $d\ge2$, and let $f(z)=z^d+c\in K[z]$. Let $p_1,p_2\in K$ be distinct periodic points of $f$ having the same period. If $\mathfrak{p}\in M_K^0$ and $v_\mathfrak{p}(p_1)>0$, then $v_\mathfrak{p}(p_2)=0$. 
\end{lem}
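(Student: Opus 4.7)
The plan is to argue by contradiction. Suppose $p_1 \neq p_2$ but $v_\mathfrak{p}(p_1), v_\mathfrak{p}(p_2) > 0$. Let $n$ be the common period, write $q_k := f^k(p_1)$ and $r_k := f^k(p_2)$, and set $a_k := v_\mathfrak{p}(q_k - r_k)$. The strategy is to show that the sequence $(a_k)_{k \ge 0}$ is nondecreasing and strictly increases from $k = 0$ to $k = 1$, which contradicts the periodicity identity $a_n = a_0$ that must hold because $f^n(p_i) = p_i$.

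The preliminary step is to verify that $v_\mathfrak{p}(c) \ge 0$. If instead $v_\mathfrak{p}(c) < 0$, then for any $\alpha \in K$ with $v_\mathfrak{p}(\alpha) \ge 0$ one computes $v_\mathfrak{p}(\alpha^d + c) = v_\mathfrak{p}(c) < 0$, and a trivial induction gives $v_\mathfrak{p}(f^k(\alpha)) = d^{k-1} v_\mathfrak{p}(c) \to -\infty$, contradicting the fact that the valuations along the periodic orbit of $p_1$ must be bounded. Granting $v_\mathfrak{p}(c) \ge 0$, induction then shows $v_\mathfrak{p}(q_k), v_\mathfrak{p}(r_k) \ge 0$ for all $k \ge 0$.

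The key computation is the factorization
\[
q_{k+1} - r_{k+1} \;=\; q_k^{\,d} - r_k^{\,d} \;=\; (q_k - r_k)\,\Sigma_k, \qquad \Sigma_k := \sum_{i=0}^{d-1} q_k^{\,d-1-i}\, r_k^{\,i},
\]
which gives $a_{k+1} = a_k + v_\mathfrak{p}(\Sigma_k)$. Since each $q_k, r_k$ is $\mathfrak{p}$-integral, every term of $\Sigma_k$ has nonnegative valuation, so $a_k$ is nondecreasing in $k$. At $k = 0$, however, $v_\mathfrak{p}(q_0), v_\mathfrak{p}(r_0) \ge 1$ forces $v_\mathfrak{p}(q_0^{\,d-1-i} r_0^{\,i}) \ge d-1$ for each $i$, whence $a_1 \ge a_0 + (d-1)$, a strict increase because $d \ge 2$. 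Combining these, $a_n \ge a_1 > a_0$, but on the other hand $q_n - r_n = p_1 - p_2$ forces $a_n = a_0$, the desired contradiction.

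I do not foresee any serious obstacle; the only subtlety is the preliminary reduction to $v_\mathfrak{p}(c) \ge 0$, since without the $\mathfrak{p}$-integrality of the iterates $\Sigma_k$ could a priori have negative valuation and the monotonicity of $(a_k)$ would collapse. Note that the argument also works transparently regardless of whether $\mathfrak{p} \mid d$, because it avoids any appeal to the residue of $d \bar{q}_k^{\,d-1}$ and instead relies only on the coarse bound $v_\mathfrak{p}(\Sigma_k) \ge 0$ together with the one strict inequality at $k=0$.
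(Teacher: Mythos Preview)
Your proof is correct. It differs from the paper's argument, which observes that $\mathfrak p$ is a place of good reduction and then reads the valuations of the roots of $f^n(z)-z$ off its Newton polygon: since the coefficient of $z$ in $f^n(z)-z$ is $-1$ (of valuation $0$) and the leading coefficient is $1$, the polygon has a single segment of positive slope from $(0,v_\mathfrak{p}(f^n(0)))$ to $(1,0)$, so at most one root of $f^n(z)-z$ can have positive valuation. Your approach instead tracks $v_\mathfrak{p}(f^k(p_1)-f^k(p_2))$ directly and forces a strict increase at the first step via the factorization $q_0^d-r_0^d=(q_0-r_0)\Sigma_0$, contradicting periodicity. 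The Newton polygon route is slightly more informative in that it simultaneously pins down $v_\mathfrak{p}(p_1)=v_\mathfrak{p}(f^n(0))$ and handles all roots of $f^n(z)-z$ at once (not only those of exact period $n$); your route is more elementary, avoids any polygon machinery, and makes the dynamical mechanism visible. One tiny point you leave implicit: $q_k\neq r_k$ for all $k$, since $q_k=r_k$ would force $p_1=f^n(p_1)=f^n(p_2)=p_2$; this guarantees each $a_k$ is finite and the inequalities are meaningful.
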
 \begin{proof} Any prime $\mathfrak{p}\in M_K^0$ such that $v_\mathfrak{p}(p_1)>0$ is a prime of good reduction for $f$. From the Newton polygon of $f^n(z)-z$, where $n$ is the period of $p_1,p_2$, we see that any prime $v$ such that $v_\mathfrak{p}(p_1)>0$ must satisfy $v_\mathfrak{p}(f^n(0))>0$, and $v_\mathfrak{p}(p_i)=0$ for any other root $p_i\ne p_1$ of $f^n(z)-z$.
\end{proof} \begin{prop}{\label{prop:lowerhtbound}} Fix $d\ge2$, and let $f(z)=z^d+c\in K[z]$. Let $p_1,p_2\in K$ be distinct periodic points of $f$ having the same period. Let $P=(p_1^d,p_2^d,f(p_1)-f(p_2))\in\mathbb{P}^2(K)$. Suppose $\xi$ is such that \[\sum_{v\in M_K^\infty}r_v\lambda_v(c)\le\xi h(c).\] Then there exists a constant $\eta$ such that \[h(P)\ge\fr{d-1-\xi}{d}h(c)+\textup{rad}(P)-\eta.\]
\end{prop}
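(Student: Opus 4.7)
The plan is to decompose $h(P) - \textup{rad}(P)$ into local contributions at each place of $K$, using three main tools: Lemma~\ref{lem:localhtbound} to bound $|p_i|_v$; Lemma~\ref{lem:htofdiff} applied to the periodic pair $f(p_1), f(p_2)$, which gives $h(p_1^d - p_2^d) = h(f(p_1) - f(p_2)) \leq \frac{1}{d}h(c) + \log 4$; and Lemma~\ref{lem:coprimality} to control the good-prime support of $p_1, p_2$.

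At a bad nonarchimedean place $\mathfrak{p}$, Lemma~\ref{lem:shape} forces $|p_i|_\mathfrak{p} = |c|_\mathfrak{p}^{1/d}$, so $v_\mathfrak{p}(p_i) = v_\mathfrak{p}(c)/d$. Because $v_\mathfrak{p}(p_i) \in \mathbb{Z}$, this requires $d \mid v_\mathfrak{p}(c)$, in particular $-v_\mathfrak{p}(c) \geq d$. Consequently $v_\mathfrak{p}(p_1^d) = v_\mathfrak{p}(p_2^d) = v_\mathfrak{p}(c)$ but $v_\mathfrak{p}(p_1^d - p_2^d) > v_\mathfrak{p}(c)$ by Lemma~\ref{lem:shape}, so $\mathfrak{p} \in I(P)$ and the local contribution to $h(P) - \textup{rad}(P)$ equals $(-v_\mathfrak{p}(c) - 1) N_\mathfrak{p} \geq \frac{d-1}{d} r_v \lambda_v(c)$. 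Summing over all bad places yields at least $\frac{(d-1)(1-\xi)}{d} h(c)$ from these places.

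At a good nonarchimedean $\mathfrak{p}$ one has $L_v = 0$, and Lemma~\ref{lem:coprimality} partitions the good primes in $I(P)$ into three disjoint classes, determined by which of $v_\mathfrak{p}(p_1)$, $v_\mathfrak{p}(p_2)$, or $v_\mathfrak{p}(p_1^d - p_2^d)$ is positive. Their total contribution to $\textup{rad}(P)$ is at most $h(p_1) + h(p_2) + h(p_1^d - p_2^d) \leq \frac{3}{d} h(c) + O(1)$ by Lemmas~\ref{lem:localhtbound} and~\ref{lem:htofdiff}. At archimedean places, Lemma~\ref{lem:localhtbound} together with the inequality $\log \max(A, B, C) \geq \frac{1}{3}(\log A + \log B + \log C)$ and the product formula for each of $p_1^d$, $p_2^d$, $p_1^d - p_2^d$ bounds $\sum_{v \in M_K^\infty} L_v$ from below in terms of the nonarchimedean quantities already estimated. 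Assembling these contributions and invoking $\sum_{v \in M_K^\infty} r_v \lambda_v(c) \leq \xi h(c)$ yields the desired $h(P) \geq \frac{d-1-\xi}{d} h(c) + \textup{rad}(P) - \eta$. The main obstacle is the careful archimedean accounting needed to extract the precise coefficient $\frac{d-1-\xi}{d}$ rather than the weaker $\frac{(d-1)(1-\xi)}{d}$ that the bad-place analysis alone provides.
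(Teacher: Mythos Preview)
Your approach has a genuine gap at the step you yourself flag as the ``main obstacle.'' After the bad-place estimate $\sum_{\text{bad}}(L_v-R_v)\ge\tfrac{d-1}{d}B$ (with $B=\sum_{v\in M_K^0}r_v\lambda_v(c)$) and the good-place loss $\sum_{\text{good}}(L_v-R_v)\ge -\tfrac{3}{d}h(c)-O(1)$, you need the archimedean part to supply at least $\tfrac{3+(d-2)\xi}{d}h(c)$ to reach the target coefficient $\tfrac{d-1-\xi}{d}$. But $\sum_{v\in M_K^\infty}L_v\le \xi h(c)+O(1)$ by Lemma~\ref{lem:localhtbound}, and the inequality $\log\max(A,B,C)\ge\tfrac13(\log A+\log B+\log C)$ does not help: the term $\log|f(p_1)-f(p_2)|_v$ has no lower bound, so the averaged quantity is still unbounded below. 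No amount of ``careful archimedean accounting'' recovers a loss of order $\tfrac{3}{d}h(c)$ from a pool of size $\xi h(c)$.

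The paper's argument avoids this by a different decomposition. Rather than splitting $h(P)-\textup{rad}(P)$ place by place, it writes $h(P)=h_+(P)-h_-(P)$ where $h_+$ uses $\min\{0,v_\mathfrak{p}(\cdot)\}$. Lemma~\ref{lem:coprimality} forces $\min\{v_\mathfrak{p}(p_1^d),v_\mathfrak{p}(p_2^d)\}=0$ at every good prime and $=v_\mathfrak{p}(c)$ at every bad prime, so $h_+(P)\ge h(c)-O(1)$ in one stroke, and $h_-(P)$ is \emph{purely archimedean}. One then bounds $h_-(P)\le\sum_{v\in M_K^\infty}-r_v\log\min\{1,|f(p_1)-f(p_2)|_v\}$ and applies the product formula to $f(p_1)-f(p_2)$, which converts the archimedean deficit into the nonarchimedean quantity $\textup{rad}(f(p_1)-f(p_2))$ with a \emph{favorable} sign. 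The outcome is
\[
h(P)\ge\frac{d-1-\xi}{d}h(c)+\textup{rad}(f(p_1)-f(p_2))-\eta,
\]
i.e., the radical that appears is $\textup{rad}(f(p_1)-f(p_2))$, not $\textup{rad}(P)$; the passage to $\textup{rad}(P)$ happens only later, in the proof of Theorem~\ref{thm:UBCunicrit}, via the separate bound~(\ref{ineq:radbound}), and costs exactly the $\tfrac{3}{d}h(c)$ you were trying to absorb here. Your three-class partition of the good primes and the height bounds for $p_1,p_2,p_1^d-p_2^d$ are essentially that later step, but you have placed it inside the proposition where it cannot be balanced.
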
 \begin{proof} For each $v\in M_K^\infty$, \cite[Lemma 4]{Ingram:mincanht} implies that we have \begin{equation}{\label{ineq:archPilb}}\lambda_v(p_i)\ge\fr{1}{d}\lambda_v(c)-C\end{equation} for some constant $C$ depending only on $d$. For $P=(z_1,z_2,z_3)\in\mathbb{P}^2(K)$ with $z_1,z_2,z_3\in K$, let \begin{equation*}\begin{split}h_+(P)=&\sum_{\textup{primes }\mathfrak{p} \textup{ of } K} -\min\{0,v_{\mathfrak{p}}(z_1),v_{\mathfrak{p}}(z_2),v_{\mathfrak{p}}(z_3)\}N_{\mathfrak{p}}\\&+\dfrac{1}{[K:\QQ]} \sum_{\sigma:K\hookrightarrow\CC} \max\{0,\textup{log}|\sigma(z_1)|,\textup{log}|\sigma(z_2)|,\textup{log}|\sigma(z_3)|\}\end{split}\end{equation*} if $K$ is a number field, and let \begin{equation*}h_+(P)=\sum_{\textup{primes }\mathfrak{p} \textup{ of } K} -\min\{0,v_{\mathfrak{p}}(z_1),v_{\mathfrak{p}}(z_2),v_{\mathfrak{p}}(z_3)\}N_{\mathfrak{p}}\end{equation*} if $K$ is a function field. Let \begin{equation}{\label{eqn:htdecomp}}h_-(P)=h_+(P)-h(P)\ge0,\end{equation} and let $P=(p_1^d,p_2^d,f(p_1)-f(p_2))\in\mathbb{P}^2(K)$. We wish to bound $h(P)$ from below. By Lemma \ref{lem:coprimality} and (\ref{ineq:archPilb}), we have \begin{equation}{\label{eqn:h+bd}}h_+(P)\ge h(c)-\eta'\end{equation} for some $\eta'$ depending only on $d$. To bound $h_-(P)$ from above, it suffices to bound \[\sum_{v\in M_K^\infty}-r_v\log\min\{1,|f(p_1)-f(p_2)|_v\}\] from above, since by Lemma \ref{lem:coprimality}, \begin{equation*}\begin{split}h_-(P)&=\sum_{v\in M_K^\infty}-r_v\log\min\{1,\max\{|p_1^d|_v,|p_2^d|_v,|f(p_1)-f(p_2)|_v\}\}\\&\le\sum_{v\in M_K^\infty}-r_v\log\min\{1,|f(p_1)-f(p_2)|_v\}.\end{split}\end{equation*} But (noting that $p_1\ne p_2$ implies $f(p_1)\ne f(p_2)$), letting \[S=\{\mathfrak{p}\in M_K^0:v_\mathfrak{p}(f(p_1)-f(p_2))>0\}\] gives \begin{equation*}\begin{split}\sum_{\mathfrak{p}\in M_K^0} v_\mathfrak{p}(f(p_1)-f(p_2))N_\mathfrak{p}&\ge\sum_{\mathfrak{p}\in S}N_\mathfrak{p}+\sum_{v\in M_K^0\setminus S}-\fr{1}{d}r_v\lambda_v(c)\\&\ge\textup{rad}(f(p_1)-f(p_2))-\fr{1}{d}h(c).\end{split}\end{equation*} By the product formula, we thus have \begin{equation*}\begin{split}\sum_{v\in M_K^\infty}-r_v\log\min\{1,|f(p_1)-f(p_2)|_v\}=&\sum_{v\in M_K^\infty}r_v\log^+|f(p_1)-f(p_2)|_v-\sum_{\mathfrak{p}\in M_K^0}v_\mathfrak{p}(f(p_1)-f(p_2))N_\mathfrak{p}\\\le&\sum_{v\in M_K^\infty}r_v\left(\fr{1}{d}\lambda_v(c)+2\log2\right)-\sum_{\mathfrak{p}\in M_K^0}v_\mathfrak{p}(f(p_1)-f(p_2))N_\mathfrak{p}\\\le&\fr{1}{d}\xi h(c)+2\log2+\fr{1}{d}h(c)-\textup{rad}(f(p_1)-f(p_2)).\end{split}\end{equation*} It follows from combining this with (\ref{eqn:htdecomp}) and (\ref{eqn:h+bd}) that \[h(P)\ge\fr{d-1-\xi}{d}h(c)+\textup{rad}(f(p_1)-f(p_2))-\eta\] for some constant $\eta$ depending only on $d$. \end{proof} We are now ready to finish the proof of Theorem \ref{thm:UBCunicrit}.\begin{proof}[Proof of Theorem \ref{thm:UBCunicrit} when $d\ge5$] Fix $d\ge5$. If $K$ is a function field, then the theorem follows from \cite[Theorem 1.6]{DoylePoonen}, so assume $K$ is a number field. Let $1>\xi>0$, and suppose $f(z)=z^d+c\in K[z]$ is such that \[\sum_{v\in M_K^\infty\cup\{v\mid d\}}r_v\lambda_v(c)\le\xi h(c).\] By \cite[Corollary 1.8]{DoylePoonen}, it suffices to prove that there is a uniform bound on the length of a $K$-rational cycle of $f$. Suppose $p_1,p_2\in K$ are distinct elements in a cycle of length $n\ge2$. Let $P=(p_1^d,p_2^d,f(p_1)-f(p_2))\in\mathbb{P}^2(K)$. For any preperiodic point $p_i\in K$, one has $v_\mathfrak{p}(p_i)=\fr{1}{d}v_\mathfrak{p}(c)$ for primes $\mathfrak{p}\in M_K^0$ such that $v_\mathfrak{p}(c)<0$. Thus \begin{equation}{\label{ineq:radbound}}\begin{split}\textup{rad}(P)&\le\textup{rad}(p_1)+\textup{rad}(p_2)+\textup{rad}(f(p_1)-f(p_2))+\fr{1}{d}h(c) \\&\le h(p_1)+h(p_2)+\textup{rad}(f(p_1)-f(p_2))+\fr{1}{d}h(c) \\&\le \fr{3}{d}h(c)+2\log2+\textup{rad}(f(p_1)-f(p_2)).\end{split}\end{equation} Therefore, for $\epsilon>0$, Proposition \ref{prop:lowerhtbound} combined with (\ref{ineq:radbound}) yields \begin{equation*}
	\begin{split}h(P)-(1+\epsilon)\textup{rad}(P)\ge&\fr{d-1-\xi}{d}h(c)+\textup{rad}(f(p_1)-f(p_2))-\eta\\&-(1+\epsilon)\left(\fr{3}{d}h(c)+\log4+\textup{rad}(f(p_1)-f(p_2))\right)\\\ge&\fr{d-4-\xi-3\epsilon}{d}h(c)-\epsilon\textup{rad}(f(p_1)-f(p_2))-(1+\epsilon)\log4-\eta\\\ge&\fr{d-4-\xi-4\epsilon}{d}h(c)-(1+2\epsilon)\log4-\eta,\end{split}
	\end{equation*} where the final inequality follows from Lemma \ref{lem:htofdiff}. For any sufficiently small choices of $\epsilon$ and $\xi$, we see that for all $c\in K$ with $h(c)\gg_{K,\epsilon,\xi}1$, this is greater than $C_{K,\epsilon}$, contradicting the $abc$-conjecture. By Northcott's Theorem, it follows that there is an upper bound on the number of $K$-rational \emph{periodic} points of $f(z)=z^d+c$ that is uniform across all $c\in K$. Applying \cite[Corollary 1.8]{DoylePoonen} then completes the proof. Proposition \ref{prop:xi} addresses the case where $\sum_{v\in M_K^\infty\cup\{v\mid d\}}r_v\lambda_v(c)>\xi h(c)$.\end{proof}

\end{document}